\documentclass[10pt,leqno]{article}

\usepackage{amsmath,amssymb,amsthm,mathrsfs,dsfont}

\usepackage[margin=3cm]{geometry} 

\usepackage{titlesec,hyperref}

\usepackage{color}

\usepackage{fancyhdr}
\usepackage{subfigure}
\usepackage[graphicx]{realboxes}
\usepackage{float}
\usepackage{enumerate}

\titleformat{\subsection}{\it}{\thesubsection.\enspace}{1pt}{}

\newtheorem{theo}{Theorem}[section]
\newtheorem{lemm}[theo]{Lemma}
\newtheorem{defi}[theo]{Definition}

\newtheorem{prop}[theo]{Proposition}
\newtheorem{rema}[theo]{Remark}

\allowdisplaybreaks 

\begin{document}
\title{On the Critical One Components Regularity for the $3-D$ Navier-Stokes System in $L^p_T(\dot{B}^{\frac 1 2+\frac 2 p}_{2,\infty})$ spaces
	\hspace{-4mm}
}

\author{
	Wei $\mbox{Luo}^1$ \footnote{E-mail:  luow23@mail.sysu.edu.cn}, \quad
	Zhaoyang $\mbox{Yin}^{1,2}$\footnote{E-mail: mcsyzy@mail.sysu.edu.cn} \quad and
	Pei $\mbox{Zheng}^{1}$\footnote{E-mail: zhengp25@mail2.sysu.edu.cn}\\
	$^1\mbox{Department}$ of Mathematics,
	Sun Yat-sen University, Guangzhou 510275, China\\
	$^2\mbox{School}$ of Science,\\ Shenzhen Campus of Sun Yat-sen University, Shenzhen 518107, China}

\date{}
\maketitle
\hrule

\begin{abstract}
	We consider the conditional regularity of the mild solution $v$ of the $3-D$ incompressible Navier-Stokes equations with initial data $v_0\in \dot{H}^{\frac 1 2}$ and vorticity $\Omega_0\in L^{r_0}$ for some $r_0\in (1,2)$. We prove that if the solution associated with initial data $v_0$ blows up at a finite time $T^\ast$, then for any $2<p<\infty$, and any unit vectors $e$ in $\mathbb{R}^3$, the integral
	$$\int_0^{T^\ast}\left\Vert (v(t)|e)_{\mathbb{R}^3}\right\Vert_{\dot{B}^{\frac 1 2+\frac 2 p}_{2,\infty}}^p{\rm d}t$$
blows up at $T^\ast$. The conclusion improves the recent results in Chemin et al. (Arch Ration Mech Anal 224(3):871-905, 2017) and Han et al. (Arch. Rational Mech. Anal. 231:939-970, 2019).

\vspace*{5pt}
				\noindent {\it 2020 Mathematics Subject Classification}:  35Q30, 76D03, 76D05, 35Q35.
				
				\vspace*{5pt}
				\noindent{\it Keywords}: Navier-Stokes equations; Regularity; Besov space.
\end{abstract}

\vspace*{10pt}

\tableofcontents

\section{Introduction}

\quad In this article, we consider the Cauchy problem of the three dimensional incompressible Navier-Stokes equation
\begin{equation}\label{NS}
	\left\{
	\begin{aligned}
		&\partial_t v+v\cdot\nabla v-\Delta v+\nabla P=0,\ \ &&x\in\mathbb{R}^3,\ t>0,\\
		&{\rm div}\  v=0, &&x\in\mathbb{R}^3,\ t>0,\\
		&v(0,x)=v_0(x), &&x\in\mathbb{R}^3.
	\end{aligned}
	\right.
\end{equation}
Here $v:[0,\infty)\times \mathbb{R}^3\rightarrow\mathbb{R}^3$ represents the velocity field of the fluid flow and $P:[0,\infty)\times \mathbb{R}^3\rightarrow\mathbb{R}$ denotes the pressure. The equations \eqref{NS} remains invariant under the following natural scaling transformation: if $(v,P)$ is a solution to \eqref{NS}, then for any $R>0$, the rescaled functions
$$v_R(t,x)=Rv(R^2t,Rx),\ P_R(t,x)=R^2P(R^2t,Rx)$$
also form a solution, corresponding to the rescaled initial data $v_{0,R}(x)=Rv_0(Rx)$. This scaling symmetry plays a fundamental role in the well-posedness theory of the Navier-Stokes equations.
The existence of global weak solutions to \eqref{NS} is known since the famous
work of Leray \cite{Leray1934} (see also Hopf \cite{Hopf1951} for the bounded domain case) for initial data
$v_0\in L^2(\mathbb{R}^3)$ with ${\rm div}\ v_0 = 0$. The uniqueness and global regularity of Leray-Hopf weak solutions is still one of the most challenging open problems. For instance, the Prodi-Serrin-Ladyzhenskaya criterion claims that if
$$\int_0^{T^\ast}\Vert v(t,\cdot)\Vert_{L^p}^q{\rm d}t<\infty,\  \frac 2 q+\frac 3 p=1$$
for some $3<p\leq\infty$, then $v$ is still regular at time $T^\ast<\infty$, based on the works of Prodi \cite{Prodi1959}, Serrin \cite{Serrin1963}, and Ladyzhenskaya \cite{Lady1967}. In particular, Escauriaza, Ser\"{e}gin and \v{S}ver\'{a}k \cite{Iskauriaza2003} proved that if
$$\limsup\limits_{t\uparrow T^\ast}\int_{\mathbb{R}^3}|v(t,x)|^3{\rm d}x<\infty,$$
then $v$ is still regular at time $T^\ast<\infty$. Kozono and Shimada \cite{Kozono2000} proved the blow up criterion
$$\int_0^{T^\ast}\Vert v\Vert_{BMO}^2{\rm d}t=+\infty.$$
And Chen and Zhang \cite{Chen2006} proved the blow up criterion in the endpoint Besov space,
$$\int_0^{T^\ast}\Vert v\Vert_{\dot{B}^\sigma_{\infty,\infty}}^p{\rm d}t=+\infty$$
with $\frac 2 p=1+\sigma,1<p<\infty,-1<\sigma<1$.

In recent years, many works \cite{2,5,1,4,3} have established  the blow-up criterion by some of the components' information.
Cao and Titi \cite{Cao2008} proved that the blow up criterion in supercritical space
$$
\left\{
\begin{aligned}
	&v^3\in L^q([0,T^\ast],L^p(\Omega))\ {\rm with }\ p>\frac 7 2,q\in[1,\infty[,\ {\rm and}\ \frac 3 p+\frac 2q<\frac{2(p+1)}{3p},\ {\rm or}\\
	&v^3\in L^\infty([0,T^\ast],L^p(\Omega))\ {\rm with}\ p>\frac 7 2
\end{aligned}
\right.
$$
with $\Omega=(0,1)^3$ is a bounded domain in $\mathbb{R}^3$. In \cite{Zhou2010}, Zhou extended Cao's results to the entire space $\mathbb{R}^3$ and improved the corresponding supercritical exponents, that if
$$
v^3\in L^q([0,T^\ast],L^p(\mathbb{R}^3))\ {\rm with }\  p> \frac {10} 3,\ {\rm and}\ \frac 3 p+\frac 2q\leq\frac 3 4+\frac 1 {2p},
$$
or if
$$
\nabla v^3\in L^q([0,T^\ast],L^p(\Omega))\ {\rm with }\ \frac 3 p+\frac 2q\leq
\left\{
\begin{aligned}
	&\frac{19}{12}+\frac 1 {2s},\ \frac {30} {19}<p\leq 3,\\
	&\frac 3 2+\frac 3 {4s},\ 3<p\leq\infty,
\end{aligned}
\right.
$$
then $v$ is still regular at time $T^\ast<\infty$. Chea and Wolf \cite{Wolf2021} get the
regularity under the almost one-component Serrin condition:
$$
v^3\in L^q([0,T^\ast],L^p(\mathbb{R}^3))\ {\rm with }\ \frac 3 p +\frac 2 q<1,\ {\rm and}\ 3<p\leq\infty.
$$
And in \cite{Wu2024}, Wang, Wu and Zhang imposed stronger integrability assumptions in time and proved that the Leray weak solution is also regular in $\mathbb{R}^3\times (0,T)$, under the scaling-invariant Serrin condition imposed on one component of the velocity
$$
v^3\in L^{q,1}([0,T^\ast],L^p(\mathbb{R}^3))\ {\rm with }\  \frac 3 p +\frac 2 q\leq1,\ {\rm and}\ 3<p<\infty,
$$
here $L^{q,1}$ denotes the Lorentz space with respect to the variable $t$.

Chemin and Zhang initiated a program aimed at proving the regularity of the solutions by only imposing the following assumption in the critical spaces which are scaling invariant:
$$I_p(v\cdot e)\triangleq\int_0^{T^\ast}\Vert v\cdot e\Vert_{\dot{H}^{\frac1 2+\frac 2 p}}^p{\rm d}t<\infty$$
where $e\in\mathbb{S}^2$ and $p\in[2,\infty)$. They succeeded in the case $4<p<6$ in \cite{Chemin2016} by providing a technical decomposition of the horizontal direction $v^h$ in terms of the vertical direction of vorticity $\Omega$ and the vertical velocity component $v^3$. This result was later extended to $4<p<\infty$ in \cite{Zhang2017} by Chemin and Zhang. Subsequently, Lei \cite{Lei2019} gave a streamlined proof covering the full range $2\leq p<\infty$, thereby completing the program. Recently, Zhang \cite{Zhang2024} has made further extensions to the aforementioned results: define $\mathcal{D}(T)=\big\{\beta:[0,T)\rightarrow\mathbb{S}^2|\beta(t){\rm \ has \ finitely\ many\ jump\ discontinuities:\ } T_1,\dots,T_n\ {\rm on}\ (0,T)\ {\rm with}\ \beta'\in L^2(T_{i-1},T_i)\ {\rm for}\ i=1,\dots,n\big\}$, then for any $\beta(t)\in\mathcal{D}(T^\ast)$, if
$$\int_0^{T^\ast}\Vert v\cdot\beta(t)\Vert_{\dot{H}^{\frac 3 2}}^2{\rm d}t=\infty,$$
then $v$ blows up at finite time $T^\ast$.

The analysis in Miller \cite{Miller2020} is formulated in terms of the strain tensor $S=\left(\frac 1 2(\partial_iv^j+\partial_jv^i)\right)_{ij}$. The equation for the strain tensor $S$
$$\partial_t S+(v\cdot\nabla)S-\Delta S+S^2+\frac 1 4\Omega\otimes\Omega-\frac 1 4 |\Omega|^2 I_3+{\rm Hess}(P)=0$$
exhibits a far better algebraic structure than that of the vorticity equation, from which the following blow-up criterion is obtained: assume $\lambda_1(x)\leq\lambda_2(x)\leq\lambda_3(x)$ be the eigenvalues of the strain tensor $S$, let $\lambda^+_2(x)=\max\{\lambda_2(x),0\}$, if
$$
\lambda_2^+\in L^q([0,T^\ast],L^p(\mathbb{R}^3))\ {\rm with }\  \frac 2 q+\frac 3 p=2,\ {\rm and}\ \frac 3 2<p\leq\infty,
$$
then $v$ is still regular at time $T^\ast<\infty$. The aforementioned blow-up criterion readily yields the following criterion:
$$
\int_0^{T^\ast}\Vert\partial_3 v+\nabla v^3\Vert_{L^p}^q{\rm d}t=\infty\ {\rm with}\ \frac 2 q+\frac 3 p=2,\ {\rm and}\ \frac 3 2<p\leq\infty.
$$

Building upon the insights from the previous work, we propose the conjecture that the one component blow-up criterion remains valid under the $L^p$ norm framework: if
$$\int_0^{T^\ast}\Vert v\cdot e\Vert_{L^p}^{q}{\rm d}t<\infty,$$
with $\frac{2}{q}+\frac{3}{p}=1$, then $v$ is still regular at time $T^\ast<\infty$.

This hypothesis appears theoretically sound, as it naturally follows from the Sobolev embedding theorem based on the key results established in \cite{Chemin2016,Zhang2017,Lei2019}. However, our efforts to rigorously prove this blow-up criterion have encountered significant technical challenges. Specifically, when performing the anisotropic decomposition, we face a fundamental obstruction: the lack of regularity with the $v^3$ component prevents us from achieving the desired norm closure. This limitation results in insufficient regularity control over the remaining terms involving horizontal components, rendering them intractable for subsequent estimates.

To make progress toward resolving this conjecture, we adopt a strategic approach by first relaxing the conditions and establishing a weaker one-component blow-up criterion. We anticipate that this intermediate result may serve as a crucial stepping stone, potentially enabling a breakthrough in the eventual proof of the more stringent one-component blow-up criterion. Actually, we prove the following blow-up criterion:

\begin{theo}\label{main}
	Let $v_0\in\dot{H}^{\frac 1 2}$ with $\nabla\cdot v_0=0$ and $\Omega_0=\nabla \times v_0\in L^{r_0}$ for some $1<r_0<2$. Let $0<T^*<\infty$ and let
	$$v\in C([0,T^*);\dot{H}^{\frac 1 2})\cap L^2([0,T^*);\dot{H}^{\frac 3 2})$$
	be the unique local mild solution to the three-dimensional Navier-Stokes equations \eqref{NS} with initial data $v_0$.
	If for some $2<p<\infty$, $v$ satisfies that
	\begin{equation}\label{Lp}
		\int_0^{T^*}\left\Vert v\cdot e\right\Vert_{\dot{B}^{\frac 1 2+\frac 2 p}_{2,\infty}}^p{\rm d}t<\infty
	\end{equation}
	with $e\in\mathbb{S}^2$, then $v\in C([0,T^*];\dot{H}^{\frac 1 2})\cap L^2([0,T^*];\dot{H}^{\frac 3 2})$ and must be regular up to time $T^*$.
	
	More precisely
	$$\max\limits_{0\leq t\leq T^*}(\Vert v(t)\Vert_{\dot{H}^{\frac 1 2}}+\Vert\Omega(t)\Vert_{L^{r_0}})<\infty$$
	and for any $0<t_0<T^*$
	$$\max\limits_{t_0\leq t\leq T^*}(\Vert v(t)\Vert_{\dot{H}^{1}}+\Vert\nabla\Omega(t)\Vert_{L^{r_0}})<\infty.$$
\end{theo}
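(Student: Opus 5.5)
We follow the Chemin--Zhang--Lei strategy, adapted to the Besov hypothesis \eqref{Lp}. By rotation invariance of \eqref{NS} we may assume $e=(0,0,1)$, so that the hypothesis controls $v^3$ in $L^p_T(\dot{B}^{\frac12+\frac2p}_{2,\infty})$. The horizontal velocity is decomposed as $v^h=\nabla_h^\perp\Delta_h^{-1}\omega-\nabla_h\Delta_h^{-1}\partial_3 v^3$, where $\omega=\partial_1v^2-\partial_2v^1$ is the third component of the vorticity. Since $v^3$ is controlled by assumption, the problem reduces to a priori estimates on the pair $(\omega,\partial_3v^3)$. These satisfy
\begin{equation*}
\partial_t\omega+v\cdot\nabla\omega-\Delta\omega=\partial_3v^3\,\omega+\partial_2v^3\partial_3v^1-\partial_1v^3\partial_3v^2,
\end{equation*}
and the equation for $\partial_3 v^3$, whose pressure term is $-\partial_3^2\Delta^{-1}\sum_{i,j}\partial_i\partial_j(v^iv^j)$. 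The assumption $\Omega_0\in L^{r_0}$ with $1<r_0<2$ enters as follows. We perform $L^{r}$-type energy estimates with $r$ between $r_0$ and $2$ (following Lei's approach), that is, estimates on $|\omega|^{r/2}$ and a suitable $\dot H^{-\frac12}$-type or $L^r$ quantity for $\partial_3v^3$. These yield a closed Gronwall system in which every right-hand side term carries at least one factor of $v^3$ or $\nabla v^3$.

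The key steps are as follows. (i) Establish the decomposition and the anisotropic Biot--Savart bounds $\|\nabla v^h\|_{L^r}\lesssim\|\omega\|_{L^r}+\|\partial_3v^3\|_{L^r}$. (ii) Derive energy identities for $\||\omega|^{r/2}\|_{L^2}^2$ and for $\partial_3 v^3$; the dissipation gives control of $\nabla|\omega|^{r/2}$ in $L^2_tL^2$. (iii) Estimate each nonlinear term by splitting $v^3$ in Littlewood--Paley blocks. Using the Besov norm $\dot B^{\frac12+\frac2p}_{2,\infty}$, we interpolate between the dissipation norm and the energy norm. The $\ell^\infty$ summability is compensated by placing the other factors in spaces with a little extra regularity, namely $\dot H^{s}$ with $s$ on both sides of the critical index, so that the dyadic sum converges geometrically. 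A typical bound we aim for is
\begin{equation*}
\Big|\int \partial_3 v^3\,|\omega|^{r}\Big|\le \tfrac1{10}\|\nabla|\omega|^{r/2}\|_{L^2}^2+C\|v^3\|_{\dot B^{\frac12+\frac2p}_{2,\infty}}^{p}\||\omega|^{r/2}\|_{L^2}^2,
\end{equation*}
which is scale-consistent. Young's inequality then gives exactly the exponent $p$. (iv) Apply Gronwall to conclude that $\omega$ and $\partial_3v^3$ stay bounded in $L^\infty_T L^r$. This yields a bound on $\nabla v$ in a critical space, and since $\dot{W}^{1,3/2}\hookrightarrow \dot H^{\frac12}$-type control holds in critical norms, we recover $\sup_t\|v\|_{\dot H^{1/2}}<\infty$. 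The blow-up alternative for mild $\dot H^{\frac12}$ solutions then shows that $T^*$ is not a blow-up time. (v) The bounds on $\|\Omega\|_{L^{r_0}}$ and, for $t\ge t_0$, on $\|v\|_{\dot H^1}$ and $\|\nabla\Omega\|_{L^{r_0}}$ follow from standard parabolic smoothing once $v\in L^2_T\dot H^{3/2}$ is known.

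The main obstacle is step (iii) for the terms involving $\nabla_h v^3\,\partial_3 v^h$ and the pressure in the $\partial_3v^3$ equation. Here only the weak $\ell^\infty$ Besov control of $v^3$ is available, whereas Lei's proof uses $\dot H^{\frac12+\frac2p}$. Our first attempt is to use the product law $\|fg\|_{\dot B^{s_1+s_2-3/2}_{2,1}}\lesssim\|f\|_{\dot B^{s_1}_{2,\infty}}\|g\|_{\dot B^{s_2}_{2,1}}$ with $s_1+s_2>0$, $s_1,s_2<3/2$. This puts the $\ell^1$ summability on the $\omega$ and $\partial_3v^3$ factors, obtained by interpolating their $L^\infty_tL^r$ and $L^2_t$-dissipative norms at two distinct regularities. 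The restriction $p>2$ (strict) is exactly what leaves room for this interpolation gap, and it is also where the condition $r_0<2$ is used.
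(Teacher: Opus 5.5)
Your overall plan matches the paper's: rotate to $e=e_3$, decompose $v^{\rm h}$, run an $L^r$ estimate on $\omega_{\frac r2}$, and use Besov duality with a little room on each side. Your model bound for $\int\partial_3v^3|\omega|^r$ is exactly the paper's $I_1$.

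\textbf{First gap: step (i), on which step (iv) rests.} The bound $\|\nabla v^{\rm h}\|_{L^r}\lesssim\|\omega\|_{L^r}+\|\partial_3v^3\|_{L^r}$ is false. The horizontal derivatives $\nabla_{\rm h}v^{\rm h}$ are indeed horizontal Riesz transforms of $\omega$ and $\partial_3v^3$, but
\[
\partial_3v^{\rm h}=\nabla_{\rm h}^{\perp}\Delta_{\rm h}^{-1}\partial_3\omega-\nabla_{\rm h}\Delta_{\rm h}^{-1}\partial_3^2v^3
\]
carries the multiplier $\xi_3\xi_{\rm h}/|\xi_{\rm h}|^2$. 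This is unbounded near $\xi_{\rm h}=0$ and is not an $L^r$ multiplier. These terms are exactly what makes the problem hard: $\partial_2v^3\partial_3v^1-\partial_1v^3\partial_3v^2$ in the $\omega$ equation (the paper's $I_{21},I_{22}$), and $Q_2$ in the $\partial_3v^3$ equation.

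The paper handles them as follows.
\begin{itemize}
\item It controls $|\nabla_{\rm h}|^{-1}\partial_3\omega$ by horizontal Hardy--Littlewood--Sobolev in $L^r_{\rm v}L^{(\frac1r-\frac12)^{-1}}_{\rm h}$. This is where $r<2$ is really needed, not in an interpolation gap.
\item It controls $|\nabla_{\rm h}|^{-1}\partial_3^2v^3$ by running the $\partial_3v^3$ estimate in the anisotropic Hilbert space $\mathcal{H}^{\theta,r}=\dot H^{-3\alpha(r)+\theta,-\theta}$ (resp.\ $\||\nabla_{\rm h}|^{-\delta}\partial v^3\|_2$ for $2<p\le4$). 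Its negative regularity is purely horizontal, and its scaling matches $L^r$, so the cross terms between the two energy inequalities can be absorbed.
\end{itemize}
Your ``$\dot H^{-\frac12}$-type or $L^r$ quantity'' leaves this choice open. An isotropic norm such as $\dot H^{-\frac12}$ gives no control of $|\nabla_{\rm h}|^{-1}\partial_3^2v^3$, and it has the scaling of $L^{\frac32}$, not $L^r$.

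Step (iv) inherits the error. $L^\infty_TL^r$ bounds on $\omega$ and $\partial_3v^3$ do not bound $\nabla v$. The field $v^{\rm h}$ is reachable only through anisotropic norms such as the $(\dot B^1_{2,1})_{\rm h}(\dot B^{1-3\alpha(r)-\beta}_{2,1})_{\rm v}$ bound of Proposition~\ref{anisoropic v^h}.

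\textbf{Second gap: step (iii).} In the $L^r$ estimate the test function is $\omega_{r-1}=\omega|\omega|^{r-2}$. For $p>4$ the factor $\partial_{\rm h}v^3$ has negative regularity $-\frac12+\frac2p$ and only $\ell^\infty$ summability. So the positive regularity and the dual $\ell^1$ summability must be carried by $\omega_{r-1}$ in $L^{r'}$.

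Your product law, fed with $\ell^1$ norms of $\omega$ and $\partial_3v^3$ obtained by interpolation, does not supply this. Besov regularity of $\omega$ says nothing directly about the nonlinear quantity $\omega_{r-1}$. Your model bound works for $I_1$ only because there $\omega\,\omega_{r-1}=(\omega_{\frac r2})^2$ is a product of $L^2$ functions. In $I_2,I_3$ the factor $\omega_{r-1}$ stands alone.

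The paper's new ingredient is Proposition~\ref{refine Sobolev}. Writing $\omega_{r-1}=G(\omega_{\frac r2})$ with $G$ H\"older of order $\frac2{r'}$, and splitting low and high frequencies, it proves
\[
\|\omega_{r-1}\|_{\dot B^s_{r',1}}\lesssim\|\nabla\omega_{\frac r2}\|_2^s\|\omega_{\frac r2}\|_2^{\frac2{r'}-s},\qquad 0<s<\tfrac2{r'}.
\]
It combines this with the rewriting $\partial_{\rm h}v^3\,\omega_{r-1}=\partial_{\rm h}T(\omega_{r-1},v^3)+A(v^3,\omega)$, which moves the horizontal derivative off $v^3$ before integrating by parts.

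Without these two devices (purely horizontal negative regularity for $\partial_3v^3$ at the scaling of $L^r$, and $\ell^1$ Besov regularity of $\omega_{r-1}$), the Gronwall argument in your step (iv) cannot be closed.
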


\begin{rema}
Since $\dot{H}^{\frac 1 2+\frac 2 p}\hookrightarrow \dot{B}^{\frac 1 2+\frac 2 p}_{2,\infty}$, it follows that our theorem can be viewed as an improvement of \cite{Chemin2016,Zhang2017,Lei2019}.
\end{rema}

Let us give a brief overview of the proof and explain some main steps. Without loss of generality, we assume $e=(0,0,1)$ throughout this paper and thus the blow-up criterion becomes
$$\int_0^{T^\ast}\left\Vert v^3\right\Vert_{\dot{B}^{\frac 1 2+\frac 2 p}_{2,\infty}}^p{\rm d}t$$

\noindent\textbf{Step 1} Anisotropic decomposition of the velocity.

The insightful method introduced in \cite{Zhang2017} and \cite{Lei2019} is to use the decomposition of the velocity field along horizontal and vertical directions and employ the two-dimensional vorticity $\omega$ and the vertical direction $v^3$ to represent the horizontal directions of velocity field. Denote
$$\nabla_{\rm h}=(\partial_1,\partial_2),\ \nabla_{\rm h}^{\bot}=(-\partial_2,\partial_1),\ \Delta_{\rm h}=\partial_1^2+\partial_2^2,$$
then by using Biot-Savart's law in the horizontal variables, we can use the vertical variable $\omega\triangleq-\partial_2 v^1+\partial_1 v^2$ of vorticity $\Omega$ and $v^3$ to express $v^{\rm h}$,
\begin{equation}\label{decomposition}
	v_{\rm curl}^{\rm h}=\nabla_{\rm h}^{\bot}\Delta_{\rm h}^{-1}\omega,\ v_{\rm div}^{\rm h}=\nabla_{\rm h}\Delta_{\rm h}^{-1}\partial_3 v^3, \ v^{\rm h}=v_{\rm curl}^{\rm h}-v_{\rm div}^{\rm h}=
	\left(\begin{array}{c}
		-\partial_2\Delta_{\rm h}^{-1}\omega-\partial_1\Delta_{\rm h}^{-1}\partial_3 v^3\\
		\partial_1\Delta_{\rm h}^{-1}\omega-\partial_2\Delta_{\rm h}^{-1}\partial_3 v^3
	\end{array}
	\right).		
\end{equation}

And it is easy to check that $\omega$ and $\nabla v^3$ solve below transport-diffusion equations respectively
\begin{equation}\label{crul}
	\partial_t\omega+v\cdot\nabla\omega-\Delta\omega=\partial_3 v^3\cdot\omega+\partial_2v^3\partial_3v^1-\partial_1v^3\partial_3v^2;
\end{equation}
\begin{equation}\label{v3}
	\partial_t\partial_i v^3+v\cdot\nabla\partial_i v^3-\Delta\partial_i v^3=-\partial_i v\cdot\nabla v^3+\partial_3\partial_i\Delta^{-1}\left(\sum_{l,m=1}^3\partial_m v^l\partial_l v^m\right),\ i=1,2,3.
\end{equation}
Thanks to Biot-Savart's law, the above system is equivalent to the original Navier-Stokes system \eqref{NS}.

\noindent\textbf{Step 2}
The dual argument and the refined estimate of $\left\Vert\omega_{r-1}\right\Vert_{\dot{B}^{s}_{r',1}}$.

As we have no a priori control on $\omega$, the quadratic terms $v^{\rm h}_{\rm curl}\cdot\nabla_{\rm h}\omega$ in \eqref{crul} cannot be controlled by any norm related to $v^3$. A standard strategy to circumvent this issue is to employ $L^r$ energy type estimate in conjunction with the divergence-free condition of $v$. Moreover,
to estimate term $\nabla_{\rm h}^{\bot}\Delta_{\rm h}^{-1}\omega$ and to control it by the $\Vert\omega_{\frac r 2}\Vert_2$ and $\Vert\nabla\omega_{\frac r 2}\Vert_2$ with the notation $a_s\triangleq\frac a {|a|}|a|^s\ (s\in]0,1[)$, the condition $r < 2$ is necessary.

For simplicity, we denote
$$\alpha(r)=\frac 1 r-\frac 1 2,\ \delta=3\alpha(r), \theta\in\left[0,\alpha(r)\right[$$
with $1<r<2$, and choosing $r$ sufficiently close to $2$. Inspired by \cite{Zhang2017}, we consider the estimate of $\partial_i v^3$ in an adapted Hilbert space. In order to close the estimates for the norms of $\omega$ and $v^3$, the scaling of $\mathcal{H}$ must be the same as that of $L^r$. Furthermore, the operator $\nabla_h\Delta_h^{-1}$ makes it natural to measure horizontal and vertical derivatives differently. This lead to the following definition.
\begin{defi}
	For $(s_1,s_2)\in\mathbb{R}^2$, $\dot{H}^{s_1,s_2}$ denotes the space of tempered distribution $a$ such that
	$$\left\Vert a\right\Vert_{\dot{H}^{s_1,s_2}}^2\triangleq\int\left|\xi_h\right|^{2s_1}\left|\xi_3\right|^{2s_2}\left|\hat{a}(\xi)\right|^2{\rm d}\xi$$
	with $\xi_{\rm h}=(\xi_1,\xi_2)$.
\end{defi}

We denote $\mathcal{H}^{\theta,r}\triangleq\dot{H}^{-3\alpha(r)+\theta,-\theta}$, and it is obvious that $\left\Vert|\nabla_h|^{-\delta}a\right\Vert_2$ is equivalent to $\Vert a\Vert_{\mathcal{H}^{\theta,r}}$ when $\theta=0$.

Note that we only have the condition $\|v^3\|_{L^p_T(\dot{B}^{\frac 1 2+\frac 2 p}_{2,\infty})}<\infty$. In the energy estimate of $\omega$ for the $L^r-$norm, we need to use the dual method and encounter difficulties in estimating the term $\left\Vert\omega_{r-1}\right\Vert_{\dot{B}^{s}_{r',1}}$ which cannot be directly obtained a suitable control from the Besov norm estimate in Lemma 3.2 in \cite{Zhang2017}. The key innovation of our approach lies in the novel integration of the above Besov norm estimate for $\omega_{\frac r 2}$ composited with the $\frac 2 {r'}$-order H\"{o}lder continuous function $G$ and a more refined Besov norm interpolation technique. Therefore, we arrive at the estimate
$$\left\Vert G\left(\omega_{\frac r 2}\right)\right\Vert_{\dot{B}^{s}_{r',1}}\leq\left\Vert\nabla\omega_{\frac r 2}\right\Vert_2^s\cdot\left\Vert\omega_{\frac r 2}\right\Vert_2^{\frac 2 {r'}-s}$$
for all $s\in\left]0,\frac 2 {r'}\right[$. This powerful combination successfully overcomes the fundamental limitation outlined earlier.

\noindent\textbf{Step 3} Estimate of $(\omega, v^3)$ for the case $p>4$.

For $p> 4$, since $\frac 1 2+\frac 2 p<1$ it follows that the quantity $\left\Vert v^3\right\Vert_{\dot{B}^{\frac 1 2+\frac 2 p}_{2,\infty}}$ cannot be obtained through the interpolation of the $\dot{H}^s$-norm of $v^3$, and the lack of regularity in $\dot{B}^{\frac 1 2+\frac 2 p}_{2,\infty}$ precludes the application of the method in \cite{Lei2019}, which is based on a straightforward combination of H\"{o}lder's inequality and Sobolev embedding, we therefore refine the estimation of detailed anisotropic inner products for equation \eqref{v3}, as introduced by Chemin et al. in \cite{Zhang2017} and stated in Lemma 5.2, which mainly relies on the Bony decomposition and the commutator estimate.

\noindent\textbf{Step 4} Estimate of $(\omega, v^3)$ for the case $2< p\leq4$.

Diverging from the case of $p> 4$, the improved regularity of
$\left\Vert v^3\right\Vert_{\dot{B}^{\frac 1 2+\frac 2 p}_{2,\infty}}$ enables a direct application of the Bony decomposition estimate under the dual method. Therefore, we reduce to the norm $\mathcal{H}^{\theta,r}$ with $\theta=0$, and the anisotropic inner product estimate of $\partial_i v^3$ can be reduced to an $L^2$ inner product estimate of $\left|\nabla_{\rm h}\right|^{-\delta}\partial_i v^3$. However, for $2<p\leq4$, we cannot control the term $\Vert T(\partial_{\rm h} v^3,\omega_{r-1})\Vert_{L^{r'}_{\rm v}L^{\frac{2r}{3r-2}}_{\rm h}}$ by $\left\Vert v^3\right\Vert_{\dot{B}^{\frac 1 2+\frac 2 p}_{2,\infty}}$, $\left\Vert\omega_{\frac r 2}\right\Vert_2$ and $\left\Vert\nabla\omega_{\frac r 2}\right\Vert_2$. Furthermore, we have not embedding from isotropic Besov space to $L^2_{\rm h}(\dot{B}^{s}_{2,\infty})$ for any $s>0$, therefore we adjust the index assignment issue of the anisotropic Bony decomposition and utilize the embedding inequalities of $(\dot{B}^{s_1}_{p,\infty})_{\rm v}(\dot{B}^{s_2}_{p,1})_{\rm h}$ and $(\dot{B}^{s_1}_{p,\infty})_{\rm h}(\dot{B}^{s_2}_{p,1})_{\rm v}$ with $s_i>0$ to obtain the desired blow-up criterion, this requires more computational effort compared to \cite{Zhang2017}.

In contrast to the methods employed by \cite{Zhang2017} and \cite{Lei2019}, we develop a refined approach that incorporates the estimates of the dual norm
$$\left\Vert\left(2^{ks}\Vert\Delta_k^{\rm h}f\Vert_p\right)_k\right\Vert_{l^q(\mathbb{Z})},\ \left\Vert\left(2^{-ks}\Vert\Delta_k^{\rm h}f\Vert_{p'}\right)_k\right\Vert_{l^{q'}(\mathbb{Z})},$$ $$\left\Vert\left(2^{ls}\Vert\Delta_l^{\rm v}f\Vert_p\right)_l\right\Vert_{l^q(\mathbb{Z})},\ \left\Vert\left(2^{-ls}\Vert\Delta_l^{\rm v}f\Vert_{p'}\right)_l\right\Vert_{l^{q'}(\mathbb{Z})}$$
with $\frac 1 p+\frac 1 {p'}=1$, $\frac 1 q+\frac 1 {q'}=1$ and $s\in\mathbb{R}$. Combining with the embedding relationships between the norm
$$\left\Vert\left(2^{ks'}\Vert\Delta_k^{\rm h}f\Vert_p\right)_k\right\Vert_{l^q(\mathbb{Z})},\ \left\Vert\left(2^{ls'}\Vert\Delta_l^{\rm v}f\Vert_p\right)_l\right\Vert_{l^q(\mathbb{Z})}\ {\rm with}\ s'>0$$
and the isotropic Besov space $\dot{B}^{s'}_{p,q}$, we can systematically overcome the regularity limitations.

Based on the method in \cite{Lei2019}, we employ a dual-space frame work that critically leverages both duality properties and the embedding relationship among anisotropic Besov spaces, isotropic Besov spaces, and $L^q$ spaces. Utilizing these computational strategies allows us to successfully overcome the restrictions of Sobolev embeddings, and moreover the problem can be reduced to performing an isotropic Bony decomposition on the horizontal or vertical direction. Through the careful selection of appropriate indices, this approach enables us to rigorously derive the desired norm estimates.

\section{Notations and Preliminaries}
	
\quad In this section, we recall estimates that will be useful  later and the basics of anisotropic Littlewood-Paley theory.

In order to use the divergence free condition on $v$, we perform a $L^r$ energy estimate for $\omega$. This is based on the following lemma:
		
\begin{lemm}\label{L-norm estiamte}\cite{Chemin2016}
	Let $r\in ]1,2]$ and $a_0$ a function in $L^r$. Let us consider a function $f\in L^1_{\rm{loc}}(\mathbb{R}^+;L^r)$ and $v$ a divergence free vector field in $L^2_{\rm{loc}}(\mathbb{R}^+;L^\infty)$. If $a$ solves
	\begin{equation}\label{L-r}
		\left\{
		\begin{array}{l}
			\partial_t a-\Delta a+v\cdot \nabla a=f,\\
			a|_{t=0}=a_0,
		\end{array}
		\right.
	\end{equation}
	then $|a|^{r/2}$ belongs to $L^\infty_{loc}(\mathbb{R}^+;L^2)\cap L^2_{loc}(\mathbb{R}^+;\dot{H}^1)$ and
	\begin{equation}
		\begin{aligned}
			&\frac 1 r \int_{\mathbb{R}^3}|a(t,x)|^r{\rm d}x+(r-1)\int_0^t\int_{\mathbb{R}^3}|\nabla a(t',x)|^2|a(t',x)|^{r-2}{\rm d}x{\rm d}t'\\
			=&\frac 1 r \int_{\mathbb{R}^3}|a_0(x)|^r{\rm d}x+\int_0^t\int_{\mathbb{R}^3}f(t',x)a(t',x)|a(t',x)|^{r-2}{\rm d}x{\rm d}t'.
		\end{aligned}
	\end{equation}
\end{lemm}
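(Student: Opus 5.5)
The plan is the standard $L^r$ energy identity: multiply the equation by $a|a|^{r-2}$ and integrate in space and time. To make this rigorous for $r\in]1,2]$, where $s\mapsto |s|^{r}$ is only $C^1$ and $|s|^{r-2}$ is singular at $0$, I will work with a regularized convex function. For $\varepsilon>0$ set $\Phi_\varepsilon(s)=(s^2+\varepsilon^2)^{r/2}-\varepsilon^r$, which is smooth and convex. Its derivatives are
\[
\Phi_\varepsilon'(s)=r s(s^2+\varepsilon^2)^{\frac r2-1},\qquad
\Phi_\varepsilon''(s)=r(s^2+\varepsilon^2)^{\frac r2-2}\bigl((r-1)s^2+\varepsilon^2\bigr)\ge r(r-1)(s^2+\varepsilon^2)^{\frac r2-1}.
\]
I will first assume $a_0,f,v$ are smooth and rapidly decaying, so that $a$ is a classical solution decaying at infinity, and remove this assumption at the end.

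For smooth data I multiply the equation by $\Phi'_\varepsilon(a)$ and integrate over $\mathbb{R}^3$. The transport term vanishes:
\[
\int v\cdot\nabla a\,\Phi_\varepsilon'(a)\,dx=\int v\cdot\nabla\Phi_\varepsilon(a)\,dx=-\int ({\rm div}\,v)\Phi_\varepsilon(a)\,dx=0 .
\]
The diffusion term gives $\int\Phi_\varepsilon''(a)|\nabla a|^2\,dx$. Integrating in time yields
\[
\int\Phi_\varepsilon(a(t))\,dx+\int_0^t\!\!\int\Phi_\varepsilon''(a)|\nabla a|^2\,dx\,dt'=\int\Phi_\varepsilon(a_0)\,dx+\int_0^t\!\!\int f\,\Phi_\varepsilon'(a)\,dx\,dt'.
\]
Next I let $\varepsilon\to0$. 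On the right, $|\Phi_\varepsilon(s)|\le|s|^r$ and $|\Phi_\varepsilon'(s)|\le r|s|^{r-1}$, so dominated convergence applies, with $|f||a|^{r-1}\in L^1$ by Hölder ($L^r\times L^{r'}$). On the left, $\Phi_\varepsilon''(a)\to r(r-1)|a|^{r-2}$ pointwise where $a\neq0$, and on the set $\{a=0\}$ we have $\nabla a=0$ a.e. Dividing by $r$ gives
\[
\frac1r\int_{\mathbb{R}^3}|a(t)|^r\,dx+(r-1)\int_0^t\!\!\int_{\mathbb{R}^3}|\nabla a|^2|a|^{r-2}\,dx\,dt'=\frac1r\int_{\mathbb{R}^3}|a_0|^r\,dx+\int_0^t\!\!\int_{\mathbb{R}^3}f\,a|a|^{r-2}\,dx\,dt' .
\]
For the left-hand side, Fatou's lemma gives the inequality at once, and monotone convergence of the lower bound gives the limit exactly. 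Since $|\nabla |a|^{r/2}|^2=\frac{r^2}{4}|a|^{r-2}|\nabla a|^2$, the identity gives $|a|^{r/2}\in L^\infty_{\rm loc}(L^2)\cap L^2_{\rm loc}(\dot H^1)$, with bounds depending only on $\|a_0\|_{L^r}$ and $\|f\|_{L^1_t(L^r)}$ (after a Gronwall/Young step on $\int|f||a|^{r-1}\le\|f\|_{L^r}\|a\|_{L^r}^{r-1}$).

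Finally I approximate general data by smooth $a_0^n\to a_0$ in $L^r$, $f^n\to f$ in $L^1_{\rm loc}(L^r)$, and divergence-free $v^n\to v$ in $L^2_{\rm loc}(L^\infty)$. The differences $a^n-a^m$ solve the same kind of equation with source $f^n-f^m-(v^n-v^m)\cdot\nabla a^m$. I expect this to be the main obstacle: the term $(v^n-v^m)\cdot\nabla a^m$ is not in $L^1(L^r)$. To handle it, I would rewrite it as ${\rm div}\bigl((v^n-v^m)a^m\bigr)$ and use $\|v\|_{L^\infty}$ together with the parabolic gain $\int|\nabla a|^2|a|^{r-2}$. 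Pairing with $\Phi'(b)$ for $b=a^n-a^m$, this gives a bound of the form
\[
\int |v^n-v^m||a^m||\nabla b||b|^{r-2}\le\tfrac{r-1}{2}\int|\nabla b|^2|b|^{r-2}+C\int|v^n-v^m|^2|a^m|^2|b|^{r-2}.
\]
The last term is bounded by $\|v^n-v^m\|_\infty^2\|a^m\|_r^2\|b\|_r^{r-2}$, which is awkward since $r<2$. Instead I would appeal to uniqueness and stability of the linear equation in $L^\infty(L^r)$ via duality with the adjoint problem, which is well posed because $v\in L^2(L^\infty)$. That yields $a^n\to a$ in $C([0,T];L^r)$, and by a.e. convergence and Fatou the identity passes to the limit. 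For the exact equality rather than an inequality, I would use weak lower semicontinuity together with the reverse inequality obtained by running the argument on $[t,T]$; alternatively it suffices to note that the paper only uses the resulting estimate.
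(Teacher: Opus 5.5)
Your smooth-data computation is the standard argument behind this lemma; the paper gives no proof and only quotes the lemma from Chemin--Zhang. One small point needs fixing. Monotone convergence of the lower bound $r(r-1)(a^2+\varepsilon^2)^{\frac r2-1}|\nabla a|^2$ gives only one inequality. For equality, use $0\le\Phi_\varepsilon''(s)\le r|s|^{r-2}$: once Fatou has shown $|a|^{r-2}|\nabla a|^2\in L^1$, dominated convergence applies, together with $\nabla a=0$ a.e. on $\{a=0\}$.

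The genuine gap is the passage to general data. It fails at three places.

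First, smooth divergence-free $v^n$ with $v^n\to v$ in $L^2_{\rm loc}(L^\infty)$ do not exist for a general $v$. Along a subsequence, $v(t)$ would be a uniform limit of continuous functions for a.e. $t$, hence continuous.

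Second, the obstacle you name is misdiagnosed. For $r\le2$, Hölder's inequality with exponents $\frac2r$ and $\frac2{2-r}$ gives
\[
\|\nabla a\|_{L^r}^2\le\|a\|_{L^r}^{2-r}\int_{\mathbb{R}^3}|a|^{r-2}|\nabla a|^2\,dx ,
\]
as in the first estimate of Lemma \ref{Sobolev type}. So the dissipation you already control bounds $\nabla a^m$ in $L^2_T(L^r)$ uniformly in $m$. Hence $(v^n-v^m)\cdot\nabla a^m$ does lie in $L^1_T(L^r)$, with norm at most $\|v^n-v^m\|_{L^2_T(L^\infty)}\|\nabla a^m\|_{L^2_T(L^r)}$. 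Your divergence-form detour cannot work: $|b|^{r-2}$ is a negative power, so $\int|v^n-v^m|^2|a^m|^2|b|^{r-2}$ is not controlled by $\|b\|_r^{r-2}$.

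Third, duality with the adjoint problem gives at most uniqueness and identification of weak limits. It does not give the convergence in $C([0,T];L^r)$ that you need to pass to the limit in $\int|a^n(t)|^r$ and $\iint f^na^n|a^n|^{r-2}$. Nor is there a ``reverse inequality on $[t,T]$'' for an irreversible parabolic flow, so lower semicontinuity leaves you with ``$\le$'' only.

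The missing idea is that for $r\le2$ you never need to approximate $v$: the transport term can be treated as a source.

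\begin{enumerate}
\item If $a\in L^\infty_{\rm loc}(L^r)$ solves the equation, then $va\in L^2_{\rm loc}(L^r)$. Write $\partial_ta-\Delta a=f-{\rm div}(va)$ in Duhamel form.
\item Maximal parabolic regularity bounds the ${\rm div}(va)$-part of $\nabla a$ in $L^2_{\rm loc}(L^r)$. The $e^{t\Delta}a_0$-part and the $f$-part are bounded there by the drift-free case of your smooth identity combined with the Hölder bound above.
\item Hence $g:=f-v\cdot\nabla a\in L^1_{\rm loc}(L^r)$, and $a$ is the solution of the heat equation with data $a_0$ and source $g$.
\item Approximate only $a_0$ and $g$. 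Differences obey the drift-free $L^r$ bound, so $a^n\to a$ in $C([0,T];L^r)$ and $\nabla a^n\to\nabla a$ in $L^2_T(L^r)$ follow at once.
\item In the limit the drift contributes $-\int_0^t\!\int v\cdot\nabla a\,\Phi_\varepsilon'(a)=-\int_0^t\!\int v\cdot\nabla\Phi_\varepsilon(a)=0$. Indeed, for a.e. $t'$ we have $\Phi_\varepsilon(a)\in W^{1,1}$, since $|\Phi_\varepsilon'(a)|\le r|a|^{r-1}\in L^{r'}$ and $\nabla a\in L^r$, while $v$ is bounded and divergence free.
\end{enumerate}

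This is exactly where $r\le2$ and $v\in L^2(L^\infty)$ are used. The argument yields the identity with ``$\le$'', hence the regularity of $|a|^{r/2}$, and that is all that is used in \eqref{curl L^r}. Exact equality for rough data needs a further argument, for example strong convergence of $\nabla|a^n|^{r/2}$ in $L^2_{t,x}$.
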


The below Sobolev type inequalities which involves the regularities of $a_{\frac r 2}$ and$\nabla a_{\frac r 2}$ in $L^2$ is highly useful in subsequent estimation.

\begin{lemm}\label{Sobolev type}\cite{Zhang2017}
	For $r\in]3/2,2[$, we have
	\begin{equation}\label{nabla L-r}
		\Vert\nabla a\Vert_r\lesssim\left\Vert\nabla a_{\frac r 2}\right\Vert_2\left\Vert a_{\frac r 2}\right\Vert_2^{\frac 2 r-1}.
	\end{equation}
	Moreover, for $s\in[-3\alpha(r),1-\alpha(r)]$, we have
	\begin{equation}\label{nabla H-s}
		\Vert\nabla a\Vert_{\dot{H}^s}\lesssim\left\Vert\nabla a_{\frac r 2}\right\Vert_2^{3\alpha(r)+s}\left\Vert a_{\frac r 2}\right\Vert_2^{1-\alpha(r)-s}.
	\end{equation}
\end{lemm}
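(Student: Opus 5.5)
We have to prove Lemma \ref{Sobolev type}. Write $b\triangleq a_{\frac r2}$, so that $a=b_{\frac 2r}$, i.e. $|a|=|b|^{2/r}$ with the same sign. The plan is to express everything about $a$ through $b$, which is controlled in $L^2$ and $\dot H^1$.

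\textbf{The estimate \eqref{nabla L-r}.} Pointwise we have
$$\nabla a=\tfrac 2r|b|^{\frac 2r-1}\nabla b .$$
Apply H\"older with exponents $\frac 1r=\frac12+\bigl(\frac1r-\frac12\bigr)$:
$$\Vert\nabla a\Vert_r\lesssim\Vert\nabla b\Vert_2\,\bigl\Vert |b|^{\frac2r-1}\bigr\Vert_{\frac{2r}{2-r}}=\Vert\nabla b\Vert_2\,\Vert b\Vert_2^{\frac2r-1},$$
since $(\frac2r-1)\cdot\frac{2r}{2-r}=2$. This already gives \eqref{nabla L-r}, without using $r>3/2$.

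\textbf{The estimate \eqref{nabla H-s}: endpoints.} The scaling is consistent. Under $b\mapsto b(\lambda\cdot)$, the right side scales like $\lambda^{(3\alpha+s)-\frac32}$. The quantity $\Vert\nabla a\Vert_{\dot H^s}$ scales like $\lambda^{1+s-\frac32}$. Since $3\alpha=\frac3r-\frac32$ and $\frac2r$-homogeneity in the amplitude matches, it suffices to prove the two endpoints and interpolate in $s$ (Sobolev interpolation $\Vert f\Vert_{\dot H^s}\le \Vert f\Vert_{\dot H^{s_0}}^{1-\theta}\Vert f\Vert_{\dot H^{s_1}}^{\theta}$).

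\emph{Lower endpoint} $s=-3\alpha$. By the dual Sobolev embedding $L^r\hookrightarrow\dot H^{-3\alpha}$ in $\mathbb R^3$ (valid because $-3\alpha=\frac32-\frac3r$), we get
$$\Vert\nabla a\Vert_{\dot H^{-3\alpha}}\lesssim\Vert\nabla a\Vert_r .$$
Then \eqref{nabla L-r} gives $\Vert\nabla b\Vert_2\Vert b\Vert_2^{2\alpha}$. This matches the exponents $3\alpha+s=0$?

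No: the exponents should be $\Vert\nabla b\Vert_2^{0}\Vert b\Vert_2^{1+2\alpha}$, so the endpoint must be redone. Instead use $a\in L^r$, where $\Vert a\Vert_r=\Vert b\Vert_2^{2/r}$, and the embedding $L^r\hookrightarrow \dot H^{-3\alpha}$ applied to $a$ itself:
$$\Vert\nabla a\Vert_{\dot H^{-3\alpha}}\approx\Vert a\Vert_{\dot H^{1-3\alpha}} .$$
It is cleaner to take as endpoints $\Vert a\Vert_{\dot H^{-3\alpha}}\lesssim\Vert b\Vert_2^{2/r}$ and $\Vert a\Vert_{\dot H^{2-\alpha}}$, i.e. $s=1-\alpha$.

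\emph{Upper endpoint.} I would bound $\Vert\nabla a\Vert_{\dot H^{1-\alpha}}$ by $\Vert\nabla b\Vert_2^{\frac2r}$. This exponent is consistent with $3\alpha+1-\alpha=2\alpha+1=\frac2r$, and $1-\alpha-s=0$.

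\textbf{Main obstacle.} This last bound is the hard step, because $G(b)=b_{2/r}$ is only $C^{1,\frac2r-1}$. The plan is to use the characterization
$$\Vert\nabla a\Vert_{\dot H^{1-\alpha}}^2\approx\int\frac{\Vert\nabla a(\cdot+y)-\nabla a\Vert_2^2}{|y|^{3+2-2\alpha}}\,dy,$$
valid because $1-\alpha\in(0,1)$, which is where $r>3/2$ is used. Then estimate the difference pointwise:
$$\bigl|\nabla a(x+y)-\nabla a(x)\bigr|\lesssim |b(x+y)-b(x)|^{\frac2r-1}|\nabla b(x+y)|+|b|^{\frac2r-1}(x)\,|\nabla b(x+y)-\nabla b(x)| .$$
The second term needs $\nabla^2 b$, which we do not have. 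So I would instead reduce to the paraproduct and Besov estimate, treating $\nabla a=\nabla b\cdot H(b)$ with $H(b)=\frac2r|b|^{2\alpha}$.

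By the $\dot B^{2\alpha}$-type H\"older estimate of Step 2 (the composition $\Vert H(b)\Vert_{\dot B^{\sigma}_{q,\infty}}\lesssim\Vert\nabla b\Vert_2^{\sigma}\Vert b\Vert_2^{2\alpha-\sigma}$ in suitable $L^q$), this becomes a product estimate. We combine the paraproduct bound with Sobolev embedding, choosing $\sigma$ so that the total exponents match the scaling computed above.

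Finally, interpolating between the two endpoints yields \eqref{nabla H-s} for every $s\in[-3\alpha,1-\alpha]$.
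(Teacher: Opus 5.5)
Your proof of \eqref{nabla L-r} is fine; the chain rule is legitimate because $z\mapsto z|z|^{\frac2r-1}$ is $C^1$. The gap is in \eqref{nabla H-s}, and it begins with a miscomputed scaling check.

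\textbf{The printed inequality is false.} Write $b=a_{\frac r2}$ and $b_\lambda=b(\lambda\cdot)$. Then $\Vert\nabla b_\lambda\Vert_2=\lambda^{-\frac12}\Vert\nabla b\Vert_2$ and $\Vert b_\lambda\Vert_2=\lambda^{-\frac32}\Vert b\Vert_2$. So the right-hand side scales like $\lambda^{-\frac12(3\alpha+s)-\frac32(1-\alpha-s)}=\lambda^{s-\frac32}$. Meanwhile $\Vert\nabla a\Vert_{\dot H^s}$ scales like $\lambda^{s-\frac12}$. Even your own two exponents, $3\alpha+s-\frac32$ and $s-\frac12$, disagree. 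Hence \eqref{nabla H-s} as printed cannot hold with a uniform constant: replace $a$ by $a(\lambda\cdot)$ and let $\lambda\to\infty$. It is a misprint for
\[\Vert a\Vert_{\dot H^s}\lesssim\Vert\nabla a_{\frac r2}\Vert_2^{3\alpha(r)+s}\Vert a_{\frac r2}\Vert_2^{1-\alpha(r)-s},\]
which is the form actually used in \eqref{Q1 estimate}.

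\textbf{Your endpoints belong to different statements.} Because you never settle on a consistent target, your lower endpoint $\Vert a\Vert_{\dot H^{-3\alpha}}\lesssim\Vert b\Vert_2^{2/r}$ is the correct one for the corrected estimate. Your upper endpoint $\Vert\nabla a\Vert_{\dot H^{1-\alpha}}\lesssim\Vert\nabla b\Vert_2^{2/r}$, however, is false:
\begin{itemize}
\item it is off by a full factor $\lambda$ under scaling;
\item it would give $1-\alpha>0$ derivatives to $\nabla a=\frac2r|b|^{2\alpha}\nabla b$, although $\nabla b$ is only in $L^2$, and no regularity of the factor $|b|^{2\alpha}$ can repair that.
\end{itemize}
So the difference-quotient/paraproduct step you flag as the main obstacle is aimed at an untrue inequality, and it is only sketched in any case. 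Interpolating between $\Vert a\Vert_{\dot H^{-3\alpha}}$ and $\Vert a\Vert_{\dot H^{2-\alpha}}$ would not produce the exponents $3\alpha+s$ and $1-\alpha-s$ under either reading. Incidentally, $1-\alpha\in]0,1[$ for every $r\in]1,2[$, so that is not where $r>3/2$ enters.

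\textbf{The corrected estimate has no hard step.} The lower endpoint is $\Vert a\Vert_{\dot H^{-3\alpha}}\lesssim\Vert a\Vert_r=\Vert b\Vert_2^{2/r}$, as you have. For the upper endpoint, use $L^q\hookrightarrow\dot H^{-\alpha}$ with $\frac1q=\frac12+\frac\alpha3$, then H\"older and $\dot H^1\hookrightarrow L^6$:
\[\Vert a\Vert_{\dot H^{1-\alpha}}\approx\Vert\nabla a\Vert_{\dot H^{-\alpha}}\lesssim\bigl\Vert |b|^{2\alpha}\nabla b\bigr\Vert_q\le\Vert\nabla b\Vert_2\,\Vert b\Vert_6^{2\alpha}\lesssim\Vert\nabla b\Vert_2^{1+2\alpha}=\Vert\nabla b\Vert_2^{\frac 2r}.\]
Now interpolate: $\Vert a\Vert_{\dot H^s}\le\Vert a\Vert_{\dot H^{-3\alpha}}^{1-\vartheta}\Vert a\Vert_{\dot H^{1-\alpha}}^{\vartheta}$ with $\vartheta=\frac{s+3\alpha}{1+2\alpha}$. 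This gives exactly $\Vert\nabla b\Vert_2^{3\alpha+s}\Vert b\Vert_2^{1-\alpha-s}$ for all $s\in[-3\alpha,1-\alpha]$.

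The bound $\Vert\nabla a\Vert_{\dot H^{-3\alpha}}\lesssim\Vert\nabla b\Vert_2\Vert b\Vert_2^{2\alpha}$, which you derived and then discarded, is precisely the corrected estimate at $s=1-3\alpha$. It was the printed target that was wrong, not your computation.
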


Since we shall use the anisotropic Littlewood-Paley theory, we recall the functional space framework used in this section. From
\cite{BCD}, the definition and some basic properties of Besov spaces can be seen in Appendix A. In order to consider the product of a distribution in the anisotropic Besov space, the embeddings from anisotropic Besov spaces into isotropic ones are highly useful:
\begin{lemm}\cite{Chemin2016}\label{embed1}
	Let $s>0$ and $(p,q)\in [1,\infty]^2$, if $p\geq q$ we have
	$$\Vert a\Vert_{L^p_{\rm h}\left(\dot{B}^s_{p,q}\right)_{\rm v}}\lesssim\Vert a\Vert_{\dot{B}^s_{p,q}}\ {\rm and}\ \Vert a\Vert_{L^p_{\rm v}\left(\dot{B}^s_{p,q}\right)_{\rm h}}\lesssim\Vert a\Vert_{\dot{B}^s_{p,q}}.$$
\end{lemm}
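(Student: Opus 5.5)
The plan is to compare the vertical dyadic blocks with the isotropic ones, using only the frequency support of the Littlewood--Paley cut-offs. After that, the condition $p\geq q$ is used to exchange the $L^p_{\rm h}$ and $\ell^q$ norms by Minkowski's inequality. I treat only the first inequality; the second is identical with the roles of the horizontal and vertical variables swapped, and horizontal blocks $\Delta^{\rm h}_k$ in place of $\Delta^{\rm v}_l$.

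\textbf{Step 1: interchange of norms.} By definition,
$$\Vert a\Vert_{L^p_{\rm h}(\dot{B}^s_{p,q})_{\rm v}}=\Big\Vert\big\Vert\big(2^{ls}\Vert\Delta^{\rm v}_l a(x_{\rm h},\cdot)\Vert_{L^p_{\rm v}}\big)_l\big\Vert_{\ell^q}\Big\Vert_{L^p_{\rm h}}.$$
Since $p/q\geq1$, Minkowski's inequality in $L^{p/q}_{\rm h}$, applied to $\sum_l|\cdot|^q$, lets us pull the $L^p_{\rm h}$ norm inside the $\ell^q$ sum. Using $L^p_{\rm h}L^p_{\rm v}=L^p(\mathbb{R}^3)$, this gives
$$\Vert a\Vert_{L^p_{\rm h}(\dot{B}^s_{p,q})_{\rm v}}\leq\big\Vert\big(2^{ls}\Vert\Delta^{\rm v}_l a\Vert_{L^p}\big)_l\big\Vert_{\ell^q(\mathbb{Z})}.$$
For $q=\infty$ this is simply the elementary bound $\sup_l$ inside $\leq$ $\sup_l$ outside.

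\textbf{Step 2: decomposition of a vertical block.} Write $a=\sum_j\Delta_j a$, which is legitimate for $a\in\dot{B}^s_{p,q}$ in the homogeneous realization. The symbol of $\Delta^{\rm v}_l$ is supported where $|\xi_3|\sim2^l$, and the symbol of $\Delta_j$ where $|\xi|\sim2^j$. Since $|\xi|\geq|\xi_3|$, the product $\Delta^{\rm v}_l\Delta_j$ vanishes unless $j\geq l-N_0$ for a fixed integer $N_0$. Moreover $\Delta^{\rm v}_l$ is convolution in $x_3$ with an $L^1$ kernel of mass independent of $l$, so it is bounded on $L^p$ uniformly in $l$. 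Hence
$$2^{ls}\Vert\Delta^{\rm v}_l a\Vert_{L^p}\lesssim\sum_{j\geq l-N_0}2^{(l-j)s}\,2^{js}\Vert\Delta_j a\Vert_{L^p}.$$

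\textbf{Step 3: summation.} The right-hand side is the discrete convolution of $(2^{js}\Vert\Delta_j a\Vert_{L^p})_j\in\ell^q$ with the sequence $\big(2^{ms}\mathbf{1}_{m\leq N_0}\big)_m$. That sequence lies in $\ell^1$ precisely because $s>0$. Young's inequality $\ell^1*\ell^q\subset\ell^q$ then gives
$$\big\Vert\big(2^{ls}\Vert\Delta^{\rm v}_l a\Vert_{L^p}\big)_l\big\Vert_{\ell^q}\lesssim\Vert a\Vert_{\dot{B}^s_{p,q}},$$
and combining with Step 1 concludes the proof.

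The only genuinely delicate points are the two hypotheses, and each is used exactly once. The restriction $s>0$ is where the argument would fail otherwise: for $s\leq0$ the geometric tail over high isotropic frequencies $j\geq l$ no longer sums. The restriction $p\geq q$ is needed for the norm interchange in Step 1. A minor technical issue is the convergence of $\sum_j\Delta_j a$ at low frequencies, which is handled by the standard convention on homogeneous Besov spaces recalled in Appendix A.
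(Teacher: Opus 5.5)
Your argument is correct and is essentially the standard proof from \cite{Chemin2016}. You use Minkowski's inequality in $L^{p/q}_{\rm h}$ (which needs $p\geq q$) to pull the $\ell^q$ norm outside, then the support comparison $\Delta^{\rm v}_l\Delta_j=0$ for $j<l-N_0$ with Young's inequality and $s>0$; these last two steps are exactly how the paper proves its Lemma \ref{embed4}. One small correction concerns $q=\infty$: write $f_l(x_{\rm h})=2^{ls}\Vert\Delta^{\rm v}_l a(x_{\rm h},\cdot)\Vert_{L^p_{\rm v}}$; then $\Vert\sup_l f_l\Vert_{L^p_{\rm h}}\leq\sup_l\Vert f_l\Vert_{L^p_{\rm h}}$ is false for finite $p$, but $p\geq q=\infty$ forces $p=\infty$, where the two suprema simply commute.
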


\begin{lemm}\cite{Chemin2016}\label{embed2}
	For any $s>0$ and any $\theta\in]0,s[$, we have
	$$\Vert a\Vert_{\left(\dot{B}^{s-\theta}_{p,q}\right)_{\rm h}\left(\dot{B}^\theta_{p,1}\right)_{\rm v}}\lesssim\Vert a\Vert_{\dot{B}^s_{p,q}}\ {\rm and}\ \Vert a\Vert_{\left(\dot{B}^{s-\theta}_{p,q}\right)_{\rm v}\left(\dot{B}^\theta_{p,1}\right)_{\rm h}}\lesssim\Vert a\Vert_{\dot{B}^s_{p,q}}.$$
\end{lemm}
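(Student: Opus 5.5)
The plan is to go directly through the Littlewood--Paley definitions, comparing anisotropic dyadic blocks with isotropic ones; the argument is symmetric in the roles of ${\rm h}$ and ${\rm v}$, so I only treat the first inequality. Recall that the left-hand norm is
$$\Vert a\Vert_{\left(\dot{B}^{s-\theta}_{p,q}\right)_{\rm h}\left(\dot{B}^\theta_{p,1}\right)_{\rm v}}=\Big\Vert\Big(2^{k(s-\theta)}\sum_{j\in\mathbb{Z}}2^{j\theta}\Vert\Delta_k^{\rm h}\Delta_j^{\rm v}a\Vert_{p}\Big)_{k}\Big\Vert_{l^q(\mathbb{Z})}.$$
Since $a\in\dot{B}^s_{p,q}$ with the homogeneous convention, we write $a=\sum_m\Delta_m a$ in $\mathcal{S}'$. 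We then insert this decomposition into each block $\Delta_k^{\rm h}\Delta_j^{\rm v}a$.

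\textbf{Step 1: support localization.} The Fourier support of $\Delta_m a$ lies in an annulus $|\xi|\sim 2^m$. The symbol of $\Delta_k^{\rm h}\Delta_j^{\rm v}$ is supported where $|\xi_{\rm h}|\sim 2^k$ and $|\xi_3|\sim 2^j$, so there $|\xi|\sim 2^{\max(k,j)}$. Hence there is a fixed integer $N_0$ such that $\Delta_k^{\rm h}\Delta_j^{\rm v}\Delta_m a=0$ unless $|m-\max(k,j)|\le N_0$. In particular, a nonzero contribution requires both $k\le m+N_0$ and $j\le m+N_0$.

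\textbf{Step 2: $L^p$ bound for each piece.} The operators $\Delta_k^{\rm h}$ and $\Delta_j^{\rm v}$ are convolutions in $x_{\rm h}$ and $x_3$ with $L^1$ kernels whose norms are independent of $k$ and $j$. By Young's inequality, this gives
$$\Vert\Delta_k^{\rm h}\Delta_j^{\rm v}\Delta_m a\Vert_p\lesssim\Vert\Delta_m a\Vert_p .$$

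\textbf{Step 3: summation.} Fix $k$. By Steps 1 and 2,
$$\sum_j 2^{j\theta}\Vert\Delta_k^{\rm h}\Delta_j^{\rm v}a\Vert_p\lesssim\sum_{m\ge k-N_0}\Vert\Delta_m a\Vert_p\sum_{j\le m+N_0}2^{j\theta}\lesssim\sum_{m\ge k-N_0}2^{m\theta}\Vert\Delta_m a\Vert_p .$$
The geometric sum in $j$ converges only because $\theta>0$; this is why $\theta\in\,]0,s[$ is needed. Multiplying by $2^{k(s-\theta)}$ gives
$$2^{k(s-\theta)}\sum_j2^{j\theta}\Vert\Delta_k^{\rm h}\Delta_j^{\rm v}a\Vert_p\lesssim\sum_{m\ge k-N_0}2^{-(m-k)(s-\theta)}\,2^{ms}\Vert\Delta_m a\Vert_p .$$
The right-hand side is a discrete convolution of the sequence $(2^{ms}\Vert\Delta_m a\Vert_p)_m$ with the kernel $\mathbf{1}_{n\ge -N_0}2^{-n(s-\theta)}$. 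This kernel lies in $l^1(\mathbb{Z})$ precisely because $s-\theta>0$. Young's inequality for sequences then yields the $l^q$ bound by $\Vert a\Vert_{\dot{B}^s_{p,q}}$.

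The second inequality follows by exchanging the roles of $\xi_{\rm h}$ and $\xi_3$. The only delicate point is Step 1 together with the justification of the decomposition. One must check that the support constraint holds uniformly in the sense stated. One must also check that interchanging the sums is legitimate, which holds since all terms are nonnegative. Finally, one must check that the anisotropic blocks of $a$ are indeed given by the series $\sum_m$ in $L^p$; this convergence follows from the finiteness of the right-hand side. Everything else is routine.
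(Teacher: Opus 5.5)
Your proof is correct. It is the standard argument of the cited source \cite{Chemin2016}; the paper itself only cites this lemma, but its proof of Lemma \ref{embed4} uses the same mechanism. You localize $\Delta_k^{\rm h}\Delta_j^{\rm v}\Delta_m$ to $|m-\max(k,j)|\le N_0$, sum the geometric series in $j$ using $\theta>0$, and close with the discrete Young inequality using $s-\theta>0$.

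The point you flag as delicate is in fact trivial. The identity $\varphi(2^{-k}|\xi_{\rm h}|)\varphi(2^{-j}|\xi_3|)=\varphi(2^{-k}|\xi_{\rm h}|)\varphi(2^{-j}|\xi_3|)\sum_{|m-\max(k,j)|\le N_0}\varphi(2^{-m}\xi)$ holds for every $\xi$. So each anisotropic block is a finite sum of at most $2N_0+1$ pieces $\Delta_k^{\rm h}\Delta_j^{\rm v}\Delta_m a$, and no convergence argument is needed.
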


\begin{lemm}\cite{BCD}\label{embed3}
	For any $p\in[2,\infty)$, then $\dot{B}^0_{p,2}\hookrightarrow L^p$ and $L^{p'}\hookrightarrow\dot{B}^0_{p',2}$. For any $p\in[1,2]$, then $\dot{B}^0_{p,p}\hookrightarrow L^p$ and $L^{p'}\hookrightarrow\dot{B}^0_{p',p'}$.
\end{lemm}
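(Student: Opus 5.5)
The plan is to reduce every claim to the Littlewood--Paley square function characterization
$$\Vert f\Vert_{L^m}\approx\Big\Vert\Big(\sum_{j\in\mathbb{Z}}|\dot{\Delta}_j f|^2\Big)^{\frac 1 2}\Big\Vert_{L^m},\qquad 1<m<\infty,\ f\in\mathcal{S}'_h,$$
which I take from \cite{BCD}. I then compare $\ell^2$ in $j$ with $\ell^m$ in $j$, using Minkowski's inequality in the appropriate direction. The endpoints $m\in\{1,\infty\}$, where the square function estimate fails, are handled by hand, as is the trivial case $m=2$, where all statements follow from Plancherel.

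\textbf{Case $p\in[2,\infty)$, claim $\dot{B}^0_{p,2}\hookrightarrow L^p$.} Take $f\in\dot{B}^0_{p,2}$, so that $f=\sum_j\dot{\Delta}_j f$ in $\mathcal{S}'$. Applying the square function bound and then Minkowski's inequality in $L^{p/2}$ (valid since $p/2\ge 1$), I get
$$\Vert f\Vert_{L^p}^2\lesssim\Big\Vert\sum_j|\dot{\Delta}_jf|^2\Big\Vert_{L^{p/2}}\le\sum_j\Vert\dot{\Delta}_jf\Vert_{L^p}^2=\Vert f\Vert_{\dot{B}^0_{p,2}}^2 .$$

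\textbf{Case $p\in[2,\infty)$, claim $L^{p'}\hookrightarrow\dot{B}^0_{p',2}$.} Here $1<p'\le 2$. Since $f\in L^{p'}$ with $p'<\infty$, we have $f\in\mathcal{S}'_h$, so the square function bound applies. Minkowski's integral inequality for the exponent $2/p'\ge1$ gives the reverse comparison
$$\Big(\sum_j\Vert g_j\Vert_{L^{p'}}^2\Big)^{\frac 1 2}\le\Big\Vert\Big(\sum_j|g_j|^2\Big)^{\frac 1 2}\Big\Vert_{L^{p'}}.$$
Applying this with $g_j=\dot{\Delta}_jf$ yields $\Vert f\Vert_{\dot{B}^0_{p',2}}\lesssim\Vert f\Vert_{L^{p'}}$. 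An equivalent route is duality: the first embedding for $p$, tested against $\dot{\Delta}_j$-localized functions, gives the second.

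\textbf{Case $p\in[1,2]$.} For $1<p\le 2$, the pointwise inclusion $\ell^p\subset\ell^2$ gives $(\sum_j|g_j|^2)^{1/2}\le(\sum_j|g_j|^p)^{1/p}$. Hence
$$\Vert f\Vert_{L^p}\lesssim\Big\Vert\Big(\sum_j|\dot{\Delta}_jf|^p\Big)^{\frac 1 p}\Big\Vert_{L^p}=\Vert f\Vert_{\dot{B}^0_{p,p}} .$$
Symmetrically, for $2\le p'<\infty$, the inclusion $\ell^2\subset\ell^{p'}$ gives $\Vert f\Vert_{\dot{B}^0_{p',p'}}=\Vert(\sum_j|\dot{\Delta}_jf|^{p'})^{1/p'}\Vert_{L^{p'}}\le\Vert(\sum_j|\dot{\Delta}_jf|^2)^{1/2}\Vert_{L^{p'}}\lesssim\Vert f\Vert_{L^{p'}}$.

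\textbf{Endpoint $p=1$.} Here $\dot{B}^0_{1,1}\hookrightarrow L^1$ follows directly from the triangle inequality applied to $\sum_j\dot{\Delta}_jf$. The dual statement $L^\infty\hookrightarrow\dot{B}^0_{\infty,\infty}$ follows from the uniform $L^1$ bound on the kernels of $\dot{\Delta}_j$.

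\textbf{Expected obstacle.} I expect no genuine obstacle. The only care needed is the technical point that the Besov elements are realized in $\mathcal{S}'_h$, so that the low-frequency sum converges and the square function theorem applies. For the Besov side this holds by the definition of the homogeneous spaces with index $0$ and $q<\infty$ in \cite{BCD}. For the $L^m$ side it holds because $m<\infty$; at $m=\infty$ the embedding is a seminorm statement.
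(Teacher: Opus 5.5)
Your argument is correct. The paper gives no proof of this lemma; it is simply imported from \cite{BCD}. So the comparison is with the standard treatment, not with anything in the text.

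You route every claim through the Littlewood--Paley square-function characterization of $L^m$, $1<m<\infty$, and then compare $\ell^2$ with $\ell^p$ in the frequency index. Each comparison goes in the right direction: Minkowski in $L^{p/2}$ for $\dot B^0_{p,2}\hookrightarrow L^p$, the integral Minkowski inequality with exponent $2/p'\geq 1$ for $L^{p'}\hookrightarrow\dot B^0_{p',2}$, and $\ell^p\subset\ell^2\subset\ell^{p'}$ for the second half. This treats all four embeddings uniformly. The cost is that it rests on the two-sided square-function theorem, a Calder\'on--Zygmund-level result used as a black box, which is more than the statement needs.

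For $p\in[1,2]$, plain Riesz--Thorin suffices, because $\ell^m(L^m)=L^m(\mathbb{Z}\times\mathbb{R}^3)$. Interpolate $f\mapsto(\dot{\Delta}_jf)_j$ between $L^2\to L^2(\mathbb{Z}\times\mathbb{R}^3)$ (Plancherel) and $L^\infty\to L^\infty(\mathbb{Z}\times\mathbb{R}^3)$ (uniform $L^1$ bound on the kernels); this gives $L^{p'}\hookrightarrow\dot B^0_{p',p'}$. Interpolating $(g_j)_j\mapsto\sum_j\tilde{\Delta}_jg_j$ between $L^1$ and $L^2$ gives $\dot B^0_{p,p}\hookrightarrow L^p$, endpoints included.

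Only the $\ell^2$ embeddings for $p>2$ genuinely need Littlewood--Paley-type input. Even there one can stay elementary, for instance:
\begin{itemize}
\item induct over $p=2^k$ using $\Vert u\Vert_{L^{2q}}^2=\Vert u^2\Vert_{L^q}$, Bony's decomposition $u^2=2T(u,u)+R(u,u)$, and the a priori finiteness of $\Vert u\Vert_{L^{2q}}$;
\item then interpolate the map $(g_j)_j\mapsto\sum_j\tilde{\Delta}_jg_j$ on $\ell^2(L^p)$.
\end{itemize}

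One step in your write-up should be spelled out: the lower bound $\Vert f\Vert_{L^p}\lesssim\Vert(\sum_j|\dot{\Delta}_jf|^2)^{1/2}\Vert_{L^p}$ for $f$ in a Besov space, since $f$ is not known a priori to lie in $L^p$. First bound the partial sums $f_N=\sum_{|j|\leq N}\dot{\Delta}_jf$, each of which lies in $L^p$. Pair with $h\in L^{p'}$, write $\langle f_N,h\rangle=\sum_{|j|\leq N}\langle\dot{\Delta}_jf,\tilde{\Delta}_jh\rangle$, and use the easy direction of the square-function estimate for $h$. Then let $N\to\infty$, using $f_N\to f$ in $\mathcal{S}'$ together with weak compactness and lower semicontinuity of the norm in $L^p$.
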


Directly following the proof of the Proposition 3.1 in \cite{Zhang2017}, we readily obtain that the index $\beta$ can be extended to the case $\beta=1/2$ with $r$ sufficiently close to $2$, hence we have the following key estimate:

\begin{prop}\label{anisoropic v^h}
	Let $v$ be a divergence free vector field. Let us consider $\theta\in]0,3\alpha(r)[$ and $\beta\in]0,1/2]$. Then we have
	$$\left\Vert v^{\rm h}\right\Vert_{\left(\dot{B}^{1}_{2,1}\right)_{\rm h}\left(\dot{B}^{1-3\alpha(r)-\beta}_{2,1}\right)_{\rm v}}\lesssim\left\Vert\omega_{\frac r 2}\right\Vert_2^{2\alpha(r)+\beta}\left\Vert\nabla\omega_{\frac r 2}\right\Vert_2^{1-\beta}+\left\Vert\partial_3v^3\right\Vert_{\mathcal{H}^{\theta,r}}^\beta\left\Vert\nabla\partial_3v^3\right\Vert_{\mathcal{H}^{\theta,r}}^{1-\beta}
	.$$
\end{prop}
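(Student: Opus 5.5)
We use the decomposition \eqref{decomposition}, $v^{\rm h}=v^{\rm h}_{\rm curl}-v^{\rm h}_{\rm div}$, and estimate the two pieces separately: the $\omega$ part gives the first term on the right, the $\partial_3v^3$ part the second. Both operators $\nabla_{\rm h}^{\bot}\Delta_{\rm h}^{-1}$ and $\nabla_{\rm h}\Delta_{\rm h}^{-1}$ are horizontal Fourier multipliers homogeneous of degree $-1$. Hence they commute with $\Delta^{\rm h}_k\Delta^{\rm v}_l$ and satisfy
$$\Vert\Delta^{\rm h}_k\Delta^{\rm v}_l\nabla_{\rm h}\Delta_{\rm h}^{-1}a\Vert_2\sim 2^{-k}\Vert\Delta^{\rm h}_k\Delta^{\rm v}_la\Vert_2 .$$
So it suffices to bound $\omega$ and $\partial_3v^3$ in $(\dot{B}^{0}_{2,1})_{\rm h}(\dot{B}^{1-3\alpha(r)-\beta}_{2,1})_{\rm v}$.

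\textbf{The $\partial_3v^3$ part.} By definition,
$$\Vert a\Vert_{\mathcal{H}^{\theta,r}}^2=\sum_{k,l}2^{2k(-3\alpha(r)+\theta)}2^{-2l\theta}\Vert\Delta^{\rm h}_k\Delta^{\rm v}_la\Vert_2^2 ,$$
and $\Vert\nabla a\Vert_{\mathcal{H}^{\theta,r}}$ adds a weight $\max(2^k,2^l)$. For $a=\partial_3v^3$ we need to control
$$\sum_{k,l}2^{l(1-3\alpha(r)-\beta)}\Vert\Delta^{\rm h}_k\Delta^{\rm v}_la\Vert_2 .$$
Write $2^{l(1-3\alpha-\beta)}$ as a weighted combination of the $\mathcal{H}$ weight and the $\nabla\mathcal{H}$ weight, with exponents $\beta$ and $1-\beta$. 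This means comparing $2^{l(1-3\alpha-\beta)}$ with $2^{k(-3\alpha+\theta)}2^{-l\theta}\max(2^k,2^l)^{1-\beta}$, times a gain factor in $k-l$. We split the sum into the regions $k\le l$ and $k>l$. In each region we write $\Vert\Delta a\Vert_2=\Vert\Delta a\Vert_2^{\beta}\Vert\Delta a\Vert_2^{1-\beta}$ and apply H\"older/Cauchy–Schwarz in $(k,l)$. The leftover geometric factors are $2^{(k-l)(\theta-3\alpha)}$ in the region $k\le l$, and the analogous factor with the sign reversed in the region $k>l$. Because $0<\theta<3\alpha(r)$, these factors are summable in one of the two variables at fixed difference. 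This gives the $\ell^1$ bound by
$$\Vert\partial_3v^3\Vert_{\mathcal{H}^{\theta,r}}^\beta\Vert\nabla\partial_3v^3\Vert_{\mathcal{H}^{\theta,r}}^{1-\beta}.$$
The standard device here is to split the double sum at a threshold $N$ and optimize $N$ to produce the product.

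\textbf{The $\omega$ part.} Since $\theta-\theta'>0$ for a suitable splitting, Lemma \ref{embed2} gives
$$\Vert\omega\Vert_{(\dot{B}^{0}_{2,1})_{\rm h}(\dot{B}^{s}_{2,1})_{\rm v}}\lesssim\Vert\omega\Vert_{\dot{B}^{s+\epsilon}_{2,1}}$$
only after trading a small horizontal amount. So instead we use the isotropic bound
$$\sum_{k,l}2^{ls}\Vert\Delta^{\rm h}_k\Delta^{\rm v}_l\omega\Vert_2\lesssim\Vert\omega\Vert_{\dot{B}^{s-\epsilon}_{2,1}}^{1/2}\Vert\omega\Vert_{\dot{B}^{s+\epsilon}_{2,1}}^{1/2},$$
obtained by the same region splitting as above, with $s=1-3\alpha(r)-\beta$. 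Each Besov norm $\Vert\omega\Vert_{\dot{B}^{\sigma}_{2,1}}$ is then interpolated between $\dot{H}^{\sigma\pm\epsilon}$ and bounded via \eqref{nabla H-s} of Lemma \ref{Sobolev type}, applied with $\nabla a$ replaced by $\omega$ (which is a derivative of $v$). This needs $\sigma-1\in[-3\alpha(r),1-\alpha(r)]$. The resulting bound is
$$\Vert\omega\Vert_{\dot{H}^{\sigma}}\lesssim\Vert\nabla\omega_{\frac r2}\Vert_2^{3\alpha+\sigma-1}\Vert\omega_{\frac r2}\Vert_2^{2-\alpha-\sigma},$$
or the same estimate read through $\Vert\omega_{r/2}\Vert$ directly, via the Sobolev embedding $L^r\hookrightarrow\dot{H}^{-3\alpha(r)}$ and its gradient version. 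Plugging in $\sigma=s$ gives exponents $1-\beta$ and $2\alpha(r)+\beta$, exactly as claimed; the $\pm\epsilon$ contributions cancel in the geometric mean.

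\textbf{Main obstacle.} The delicate point is the endpoint $\beta=1/2$. There the vertical index $1-3\alpha(r)-\beta=\tfrac12-3\alpha(r)$ must stay positive, and the range conditions of Lemma \ref{Sobolev type} must still hold after the $\pm\epsilon$ shifts. Both are satisfied precisely when $r$ is chosen close enough to $2$, so that $\alpha(r)$ is small. I would fix $\epsilon<\min\{3\alpha(r),\tfrac12-3\alpha(r)\}$ and check each summation region explicitly, as in the proof of Proposition 3.1 of \cite{Zhang2017}.
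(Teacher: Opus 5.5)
Your $\partial_3v^3$ estimate is sound when done by the threshold splitting you mention at the end: Cauchy--Schwarz against $\Vert\partial_3v^3\Vert_{\mathcal{H}^{\theta,r}}$ on $\max(k,l)\le N$, against $\Vert\nabla\partial_3v^3\Vert_{\mathcal{H}^{\theta,r}}$ on $\max(k,l)>N$, then optimize $N$. The plain H\"older version you describe first does not close. The leftover weight is scale invariant, so it is constant along each diagonal $k-l=\mathrm{const}$, and its sum diverges.

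The genuine gap is the $\omega$ piece. The inequality
\[
\sum_{k,l}2^{ls}\Vert\Delta^{\rm h}_k\Delta^{\rm v}_l\omega\Vert_2\lesssim\Vert\omega\Vert_{\dot B^{s-\epsilon}_{2,1}}^{1/2}\Vert\omega\Vert_{\dot B^{s+\epsilon}_{2,1}}^{1/2}
\]
is false. No isotropic $L^2$-based norm can control an $\ell^1$ sum over horizontal dyadic blocks that carry horizontal weight $2^{0\cdot k}$. For a counterexample, take $\hat a=\sum_{k=-M}^{0}2^{-k}\phi_k$, with $\phi_k$ a bump adapted to $\{|\xi_{\rm h}|\sim2^k,\ |\xi_3|\sim1\}$.
\begin{itemize}
\item Each block $\Delta^{\rm h}_k\Delta^{\rm v}_0a$ has $L^2$ norm $\sim1$, so the left side is $\sim M$.
\item Since $|\xi|\sim1$ on the support, every $\dot H^\sigma$ and $\dot B^\sigma_{2,1}$ norm of $a$ is $\sim\sqrt M$.
\item Setting $v^{\rm h}=\nabla_{\rm h}^{\bot}\Delta_{\rm h}^{-1}a$, $v^3=0$ shows $a$ can be a vorticity.
\end{itemize}
The region splitting worked for $\partial_3v^3$ only because the $\mathcal{H}^{\theta,r}$ weight $2^{2k(\theta-3\alpha(r))}$, with $\theta<3\alpha(r)$, penalizes low horizontal frequencies. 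Isotropic norms carry no such weight.

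Separately, your exponent bookkeeping is inconsistent: your displayed formula at $\sigma=s$ puts the power $-\beta$ on $\Vert\nabla\omega_{\frac r2}\Vert_2$. Read with $a=\omega$, the Sobolev-type bound is $\Vert\omega\Vert_{\dot H^\sigma}\lesssim\Vert\nabla\omega_{\frac r2}\Vert_2^{3\alpha(r)+\sigma}\Vert\omega_{\frac r2}\Vert_2^{1-\alpha(r)-\sigma}$, as scaling shows.

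The missing idea is to use $\omega\in L^r$ with $r<2$. This is the only source of decay at low horizontal frequencies; in the example above, $\Vert a\Vert_r\gtrsim2^{2M\alpha(r)}$. Blockwise anisotropic Bernstein gives $\Vert\Delta^{\rm h}_k\Delta^{\rm v}_l\omega\Vert_2\lesssim2^{2k\alpha(r)}2^{l\alpha(r)}\Vert\Delta^{\rm h}_k\Delta^{\rm v}_l\omega\Vert_r$, hence
\[
\Vert\omega\Vert_{(\dot B^{0}_{2,1})_{\rm h}(\dot B^{1-3\alpha(r)-\beta}_{2,1})_{\rm v}}\lesssim\Vert\omega\Vert_{(\dot B^{2\alpha(r)}_{r,1})_{\rm h}(\dot B^{1-2\alpha(r)-\beta}_{r,1})_{\rm v}}\lesssim\Vert\omega\Vert_{\dot B^{1-\beta}_{r,1}}.
\]
The last step is Lemma \ref{embed2}, which now applies because both indices are positive. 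The positivity conditions $1-2\alpha(r)-\beta>0$ here and $\theta+1-3\alpha(r)-\beta>0$ in the $\partial_3v^3$ piece are the only constraints linking $\beta$ and $r$. So the endpoint $\beta=\frac12$ is harmless for $r$ near $2$, not delicate.

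To finish, the threshold argument gives $\Vert\omega\Vert_{\dot B^{1-\beta}_{r,1}}\lesssim\Vert\omega\Vert_r^{\beta}\Vert\nabla\omega\Vert_r^{1-\beta}$. Then $\Vert\omega\Vert_r=\Vert\omega_{\frac r2}\Vert_2^{2/r}$ together with \eqref{nabla L-r} yields exactly $\Vert\omega_{\frac r2}\Vert_2^{2\alpha(r)+\beta}\Vert\nabla\omega_{\frac r2}\Vert_2^{1-\beta}$.

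The paper itself gives no separate argument; it defers to Proposition 3.1 of \cite{Zhang2017}. Some step exploiting $r<2$, as above, is needed there to control the low horizontal frequencies of $\omega$.
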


And following the same line of proof of Lemma 4.5 of \cite{Chemin2016} we have the following product laws in the anisotropic Besov spaces:

\begin{lemm}\label{anisotropic product}
	Let $q,\ q_1,\ q_2\geq1$ with $1/q_1+1/q_2\geq 1/q$, $p,\ p_1,\ p_2\geq1$ with $1/p\leq1/p_1+1/p_2\leq1$. Then for $s_1+s_2>0$ with $s_1<2\min(\frac 1 {p_1},\frac 1 {p_1}+\frac 1 {p_2}-\frac 1 p)$, $s_2<2\min(\frac 1 {p_2},\frac 1 {p_1}+\frac 1 {p_2}-\frac 1 p)$ and $\sigma_1+\sigma_2>0$ with $\sigma_1<\min(\frac 1 {p_1},\frac 1 {p_1}+\frac 1 {p_2}-\frac 1 p)$, $\sigma_2<\min(\frac 1 {p_2},\frac 1 {p_1}+\frac 1 {p_2}-\frac 1 p)$, we obtain the inequality of the product $ab$
	$$
	\left\Vert a b\right\Vert_{\left(\dot{B}^{s_1+s_2-2\left(\frac 1 {p_1}+\frac 1 {p_2}-\frac 1 p\right)}_{p,q}\right)_{\rm h}\left(\dot{B}^{\sigma_1+\sigma_2-\left(\frac 1 {p_1}+\frac 1 {p_2}-\frac 1 p\right)}_{p,q}\right)_{\rm v}}\lesssim
	\left\Vert a\right\Vert_{\left(\dot{B}^{s_1}_{p_1,q_1}\right)_{\rm h}\left(\dot{B}^{\sigma_1}_{p_1,q_1}\right)_{\rm v}}\left\Vert b\right\Vert_{\left(\dot{B}^{s_2}_{p_2,q_2}\right)_{\rm h}\left(\dot{B}^{\sigma_2}_{p_2,q_2}\right)_{\rm v}}
	.$$
	Moreover, if $q_i=1$ we can take $s_i=2\min(\frac 1 {p_i},\frac 1 {p_1}+\frac 1 {p_2}-\frac 1 p)$ and $\sigma_i=\min(\frac 1 {p_i},\frac 1 {p_1}+\frac 1 {p_2}-\frac 1 p)$.
\end{lemm}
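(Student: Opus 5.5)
The plan is to follow the proof of Lemma 4.5 in \cite{Chemin2016}. We work separately in the horizontal and vertical variables and apply Bony's decomposition in each direction. Write
$$ab=\sum_{k,l}\Big(T^{\rm h}_k+R^{\rm h}_k\Big)\Big(T^{\rm v}_l+R^{\rm v}_l\Big)\quad\text{schematically, i.e.}\quad ab=(T^{\rm h}+\tilde T^{\rm h}+R^{\rm h})(T^{\rm v}+\tilde T^{\rm v}+R^{\rm v})(a,b).$$
Here $T^{\rm h}(a,b)=\sum_k S^{\rm h}_{k-1}a\,\Delta^{\rm h}_k b$, $\tilde T^{\rm h}(a,b)=T^{\rm h}(b,a)$ and $R^{\rm h}(a,b)=\sum_{|k-k'|\le1}\Delta^{\rm h}_k a\,\Delta^{\rm h}_{k'}b$, and similarly in the vertical variable. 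This splits $ab$ into nine pieces. We estimate $\Delta^{\rm h}_j\Delta^{\rm v}_m$ of each piece in $L^p(\mathbb{R}^3)$, multiply by $2^{j(s_1+s_2-2\lambda)}2^{m(\sigma_1+\sigma_2-\lambda)}$ with $\lambda=\frac1{p_1}+\frac1{p_2}-\frac1p$, and sum in $\ell^q$.

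The basic tool is Hölder's inequality combined with anisotropic Bernstein inequalities. A horizontal block at frequency $2^k$ gains $2^{2k(\frac1{p_i}-\frac1{\tilde p})}$ when passing from $L^{p_i}_{\rm h}$ to $L^{\tilde p}_{\rm h}$, and a vertical block gains $2^{k(\frac1{p_i}-\frac1{\tilde p})}$. This explains the factor $2$ in the horizontal indices. Since $1/p\le 1/p_1+1/p_2\le1$, we can choose intermediate exponents $\tilde p_1,\tilde p_2$ with $1/\tilde p_1+1/\tilde p_2=1/p$, $\tilde p_i\ge p_i$, so that each factor is placed in $L^{\tilde p_i}$ by Bernstein. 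The distribution of the Bernstein loss $\lambda$ is then chosen piece by piece.

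For the paraproduct pieces, say $T^{\rm h}T^{\rm v}(a,b)$, the low-frequency factor $S^{\rm h}_{k-1}S^{\rm v}_{l-1}a$ is estimated by summing $\sum_{k'\le k}2^{2k'(\frac1{p_1}-\frac1{\tilde p_1})}\|\Delta^{\rm h}_{k'}\Delta^{\rm v}_{l'}a\|_{p_1}$. This sum converges geometrically to $2^{k(2(\frac1{p_1}-\frac1{\tilde p_1})-s_1)}$ times the norm of $a$, provided $s_1<2(\frac1{p_1}-\frac1{\tilde p_1})$. Optimizing over admissible $\tilde p_1$ gives the constraint $s_1<2\min(\frac1{p_1},\lambda)$. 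The vertical constraint $\sigma_1<\min(\frac1{p_1},\lambda)$ arises the same way. The high factor carries its own regularity, and the output is localized near frequency $2^k$, so Young's inequality in $\ell^q$ with $1/q_1+1/q_2\ge1/q$ closes this piece. The mixed pieces, such as $T^{\rm h}R^{\rm v}$, combine a paraproduct in one direction with a remainder in the other.

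For the remainder pieces, $\Delta^{\rm h}_j R^{\rm h}$ is a sum over $k\ge j-N$. Here we first place the product in $L^{\bar p}$ with $1/\bar p=1/p_1+1/p_2$ (this is allowed since $1/\bar p\le1$), and then use Bernstein at the output frequency $2^j$ to gain $2^{2j(\frac1{\bar p}-\frac1p)}=2^{2j\lambda}$. The series $\sum_{k\ge j}2^{-(k-j)(s_1+s_2)}$ converges because $s_1+s_2>0$, and the vertical series converges because $\sigma_1+\sigma_2>0$. The endpoint case $q_i=1$ follows because the low-frequency sum then converges with no geometric decay: we bound it by the $\ell^1$ norm, which allows equality in the constraints.

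The main obstacle is bookkeeping rather than any single estimate. In each of the nine pieces we must choose the Hölder and Bernstein exponents $\tilde p_i$ consistently in both directions at once, while keeping the final Lebesgue exponent equal to $p$ and the total loss exactly $2\lambda$ horizontally and $\lambda$ vertically. The hardest case is the mixed remainder–paraproduct pieces. There the horizontal remainder forces the $L^{\bar p}$ route, while the vertical paraproduct requires the $\tilde p_i$ route. I would handle this by applying the vertical Bernstein step to the low-frequency factor within the $L^{\bar p}$ product before the horizontal output Bernstein step. The low-frequency exponents and the remainder exponent can be chosen independently because the $L^p$ norm on $\mathbb{R}^3$ factors as $L^p_{\rm h}L^p_{\rm v}$.
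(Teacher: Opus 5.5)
Your handling of the main estimate matches the paper's route. The paper gives no argument of its own and defers to Lemma 4.5 of \cite{Chemin2016}, which is exactly what you outline: Bony's decomposition in both variables, H\"older plus anisotropic Bernstein, mixed norms in the cross terms, and the $L^{\bar p}$ route in the remainder direction. Your derivation of $s_1<2\min(\frac1{p_1},\lambda)$ and $\sigma_1<\min(\frac1{p_1},\lambda)$ from the optimal choice of $\tilde p_1$ is correct. One routine detail: in the cross terms, the Bernstein step in the paraproduct direction must be applied to both factors, not only the low-frequency one, whenever $\frac1p<\frac1{p_2}$. The step that does not go through is the endpoint clause.

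Take $q_1=1$ with $s_1$ (or $\sigma_1$) at its endpoint. Bounding the low-frequency sum by the $\ell^1$ norm of $a$ gives a bound that is only uniform in the frequency indices, i.e.\ an $\ell^\infty$ sequence, not an $\ell^{q_1}$ one. So in $T^{\rm h}T^{\rm v}(a,b)$, and in every piece where $a$ is the low factor in the endpoint direction, you control the output coefficients only in $\ell^{q_2}$. Closing in $\ell^q$ then needs $q_2\le q$, and the hypothesis $1/q_1+1/q_2\ge 1/q$ is vacuous when $q_1=1$. No argument can repair this, because the endpoint assertion is false in that generality. A counterexample:
\begin{itemize}
\item Take $p_1=p_2=p=2$, so $\lambda=\frac12$, $s_1=1$, $\sigma_1=\frac12$; and take $q_1=q=1$, $q_2=\infty$, $0<s_2<1$, $0<\sigma_2<\frac12$.
\item Let $b(x)=\chi(x_{\rm h})\psi(x_3)$, with $\psi\in C^\infty_c(\mathbb{R})$ and $\chi=\eta\,|x_{\rm h}|^{s_2-1}$ for some $\eta\in C^\infty_c(\mathbb{R}^2)$.
\item Let $a\in C^\infty_c(\mathbb{R}^3)$ be equal to $1$ on the support of $b$. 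Then $a\in(\dot B^{1}_{2,1})_{\rm h}(\dot B^{1/2}_{2,1})_{\rm v}$.
\end{itemize}
The anisotropic norms of $b$ factorize, so the right-hand side is a constant times $\|\chi\|_{\dot B^{s_2}_{2,\infty}(\mathbb{R}^2)}\|\psi\|_{\dot B^{\sigma_2}_{2,\infty}(\mathbb{R})}<\infty$. But $ab=b$, and the left-hand side is $\|\chi\|_{\dot B^{s_2}_{2,1}(\mathbb{R}^2)}\|\psi\|_{\dot B^{\sigma_2}_{2,1}(\mathbb{R})}=\infty$, since $2^{ks_2}\|\Delta^{\rm h}_k\chi\|_2$ tends to a positive constant as $k\to\infty$.

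You therefore need the extra hypothesis $q_2\le q$ when $q_1=1$, and $q_1\le q$ when $q_2=1$. This holds automatically when $q_1=q_2=q$, which is the setting of Lemma 4.5 in \cite{Chemin2016}. Under it your endpoint argument is fine, because $\ell^\infty\cdot\ell^{q_2}\subset\ell^{q}$.
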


The description of regularity of $\omega_{r-1}$ in terms of Besov spaces will be useful. In order to get a refine estimate of $\omega_{r-1}$, the following lemma is important:

\begin{lemm}\cite{Zhang2017}\label{Holder}
	Let $(s,\alpha)\in]0,1[^2$ and $(p,q)\in[1,\infty]^2$. We consider a function $G$ from $\mathbb{R}$ to $\mathbb{R}$ which is a H\"{o}lderian of exponent $\alpha$. Then for any $a\in\dot{B}^s_{p,q}$, one has
	$$\left\Vert G(a)\right\Vert_{\dot{B}^{\alpha s}_{\frac p \alpha,\frac q \alpha}}\lesssim\Vert G\Vert_{C^\alpha}\Vert a\Vert_{\dot{B}^s_{p,q}}^\alpha\ with\ \Vert G\Vert_{C^\alpha}\triangleq\sup_{r\neq r'}\frac{|G(r)-G(r')|}{|r-r'|^\alpha}.$$
\end{lemm}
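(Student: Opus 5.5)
The statement to prove is Lemma~\ref{Holder}: for $G\in C^\alpha$ and $a\in\dot{B}^s_{p,q}$, one has $G(a)\in\dot{B}^{\alpha s}_{p/\alpha,q/\alpha}$. The plan is to avoid Littlewood--Paley blocks of $G(a)$ altogether and use the characterization of homogeneous Besov spaces by finite differences. This is available because both indices lie in $]0,1[$: we have $0<\alpha s<1$ and $0<s<1$. For $\sigma\in]0,1[$ and $(p,q)\in[1,\infty]^2$ (see \cite{BCD}),
$$\Vert f\Vert_{\dot{B}^{\sigma}_{p,q}}\approx\Big\Vert \frac{\Vert \tau_{-y}f-f\Vert_{L^p}}{|y|^{\sigma}}\Big\Vert_{L^q(\mathbb{R}^3;\frac{{\rm d}y}{|y|^3})},$$
where $\tau_{-y}f=f(\cdot+y)$. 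This equivalence holds modulo polynomials, i.e.\ for $f$ in the realization where low frequencies vanish. I would apply it on the left with $\sigma=\alpha s$ and indices $(p/\alpha,q/\alpha)$, and on the right with $\sigma=s$ and indices $(p,q)$. If $p/\alpha$ or $q/\alpha$ exceeds $\infty$ formally, i.e.\ $p=\infty$ or $q=\infty$, the same formula is read with sup-norms.

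The key pointwise step uses the H\"older property of $G$:
$$|G(a)(x+y)-G(a)(x)|\le \Vert G\Vert_{C^\alpha}|a(x+y)-a(x)|^\alpha.$$
Taking the $L^{p/\alpha}$ norm in $x$ gives
$$\Vert \tau_{-y}G(a)-G(a)\Vert_{L^{p/\alpha}}\le\Vert G\Vert_{C^\alpha}\Vert \tau_{-y}a-a\Vert_{L^p}^\alpha,$$
since $\Vert |h|^\alpha\Vert_{L^{p/\alpha}}=\Vert h\Vert_{L^p}^\alpha$. Dividing by $|y|^{\alpha s}=(|y|^s)^\alpha$ and taking the $L^{q/\alpha}(\frac{{\rm d}y}{|y|^3})$ norm, the identity $\Vert \phi^\alpha\Vert_{L^{q/\alpha}}=\Vert\phi\Vert_{L^q}^\alpha$ yields
$$\Vert G(a)\Vert_{\dot{B}^{\alpha s}_{p/\alpha,q/\alpha}}\lesssim \Vert G\Vert_{C^\alpha}\Big\Vert\frac{\Vert\tau_{-y}a-a\Vert_{L^p}}{|y|^s}\Big\Vert_{L^q(\frac{{\rm d}y}{|y|^3})}^\alpha\approx\Vert G\Vert_{C^\alpha}\Vert a\Vert^\alpha_{\dot{B}^s_{p,q}}.$$
The exponents thus match exactly, which explains the indices $p/\alpha$ and $q/\alpha$.

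The main obstacle is the functional-analytic bookkeeping rather than the estimate itself. First, the difference characterization requires $f\in\mathcal{S}'_h$, which is the meaning of $\dot{B}$ here, and the right-hand side only controls $f$ modulo constants. One must therefore check that $G(a)$ belongs to a class where this characterization applies, e.g.\ by normalizing $G(0)=0$ (replacing $G$ by $G-G(0)$ if necessary). Then $|G(a)|\le \Vert G\Vert_{C^\alpha}|a|^\alpha$, and one would argue that $a$ vanishes at infinity in a weak sense, so that $G(a)$ has no constant part. Second, $p/\alpha$ may be large, and $q/\alpha$ may be large or infinite; these cases are allowed, since the characterization is valid for all indices in $[1,\infty]$. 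I would first prove the inequality for $a$ smooth with compactly supported Fourier transform away from the origin and then pass to the limit by Fatou's lemma in the difference norm. This density step is where I expect the care to be needed, especially when $q=\infty$.
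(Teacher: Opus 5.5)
Your argument is correct and is essentially the standard proof of this lemma in \cite{Zhang2017}; the paper itself only quotes the result. That proof combines the finite-difference characterization of $\dot{B}^{\sigma}_{p,q}$ for $\sigma\in]0,1[$, the pointwise bound $|G(a(x+y))-G(a(x))|\le\Vert G\Vert_{C^\alpha}|a(x+y)-a(x)|^\alpha$, and the identities $\Vert |h|^\alpha\Vert_{L^{p/\alpha}}=\Vert h\Vert_{L^p}^\alpha$, $\Vert\phi^\alpha\Vert_{L^{q/\alpha}}=\Vert\phi\Vert_{L^q}^\alpha$.

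Your density/Fatou step is unnecessary: the direction in which finite differences bound $\Vert(2^{j\sigma}\Vert\Delta_jf\Vert_{L^p})_j\Vert_{l^q}$ uses only $\int\mathcal{F}^{-1}\varphi\,{\rm d}x=0$ and does not need $f\in\mathcal{S}'_h$. So $\mathcal{S}'_h$ enters only through the membership of $G(a)$, which holds when $G(0)=0$ and $p<\infty$ (split $a=S_ka+(a-S_ka)$ with $\Vert S_ka\Vert_{L^\infty}$ small for $k\ll0$ and $a-S_ka\in L^p$), but can fail for $p=\infty$ (e.g.\ $a=\cos x_1$, $G=|\cdot|^\alpha$, for which $G(a)$ has nonzero mean); the paper never needs that case, since it applies the lemma only with $p=2$.
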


Based on Lemma \ref{Holder}, we obtain the Sobolev type refined inequality of $\omega_{r-1}$.

\begin{prop}\label{refine Sobolev}
	For any $s\in]0,\alpha[$ with $\alpha\in]0,1[$, $G$ is $\alpha$-order H\"{o}lder continuous, then we have
	$$\Vert G(\omega_{\frac r 2})\Vert_{\dot{B}^s_{\frac 2 \alpha,1}}\lesssim\left\Vert\nabla\omega_{\frac r 2}\right\Vert_2^s\left\Vert\omega_{\frac r 2}\right\Vert_2^{\alpha-s}.$$
	In particular, if $s\in]0,\frac 2 {r'}[$, we have
	$$\Vert \omega_{r-1}\Vert_{\dot{B}^s_{r',1}}\lesssim\left\Vert\nabla\omega_{\frac r 2}\right\Vert_2^s\left\Vert\omega_{\frac r 2}\right\Vert_2^{\frac 2 {r'}-s}.$$
\end{prop}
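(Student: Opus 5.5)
The plan is to apply Lemma \ref{Holder} to the function $a=\omega_{\frac r 2}$, and then to interpolate between two endpoint bounds for $a$: an $L^2$ bound and a $\dot{H}^1$ bound. Set $a\triangleq\omega_{\frac r 2}$, so that $\Vert a\Vert_{L^2}=\Vert\omega_{\frac r 2}\Vert_2$ and $\Vert a\Vert_{\dot{H}^1}\approx\Vert\nabla\omega_{\frac r 2}\Vert_2$.

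\textbf{Step 1 (reduction to a Besov bound on $a$).} Given $s\in]0,\alpha[$, put $\sigma=s/\alpha\in]0,1[$. Lemma \ref{Holder}, applied with $p=2$ and $q=\alpha$, would give
$$\Vert G(a)\Vert_{\dot{B}^{s}_{\frac 2\alpha,1}}\lesssim \Vert G\Vert_{C^\alpha}\Vert a\Vert^{\alpha}_{\dot{B}^{\sigma}_{2,\alpha}}.$$
Lemma \ref{Holder} is stated for $q\in[1,\infty]$, while here $q=\alpha<1$. I would first check that its proof goes through for quasi-Banach $\ell^q$ with $q<1$. That proof is a difference-characterization argument: one uses $|G(a(x+y))-G(a(x))|\le \Vert G\Vert_{C^\alpha}|a(x+y)-a(x)|^\alpha$, and the $\ell^{q/\alpha}$ sums transform correctly. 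If this check fails, I would instead take $q=1$ to get the target space $\dot{B}^s_{\frac 2\alpha,\frac1\alpha}$, and then recover the third index $1$ by a separate interpolation between two neighbouring regularities $s_0<s<s_1$ at the level of $G(a)$. That interpolation is standard for homogeneous Besov spaces with fixed integrability index.

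\textbf{Step 2 (interpolation of $a$).} I would bound $\Vert a\Vert_{\dot{B}^{\sigma}_{2,\alpha}}$ by splitting the Littlewood--Paley sum at a frequency threshold $2^{N}$. At low frequencies I use $\Vert\dot\Delta_j a\Vert_2\le\Vert a\Vert_2$, and at high frequencies $\Vert\dot\Delta_j a\Vert_2\lesssim 2^{-j}\Vert\nabla a\Vert_2$. Since $0<\sigma<1$, both geometric series converge in any $\ell^q$, and in particular in $\ell^{\alpha}$. This gives
$$\Vert a\Vert_{\dot{B}^\sigma_{2,\alpha}}\lesssim 2^{N\sigma}\Vert a\Vert_2+2^{N(\sigma-1)}\Vert\nabla a\Vert_2.$$
Optimizing in $N$ yields $\Vert a\Vert_2^{1-\sigma}\Vert\nabla a\Vert_2^{\sigma}$. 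Raising to the power $\alpha$ then gives $\Vert\nabla\omega_{\frac r2}\Vert_2^{s}\Vert\omega_{\frac r2}\Vert_2^{\alpha-s}$, which is the claimed bound.

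\textbf{Step 3 (the particular case).} Note that $\omega_{r-1}=G(\omega_{\frac r2})$ with $G(\tau)=\tau_{\frac{2(r-1)}{r}}$, i.e. $G(\tau)=\frac{\tau}{|\tau|}|\tau|^{\frac{2(r-1)}{r}}$. Since $\frac{2(r-1)}{r}=\frac{2}{r'}\in]0,1[$ for $1<r<2$, the function $G$ is H\"older of order $\alpha=\frac2{r'}$: the map $\tau\mapsto\tau|\tau|^{\gamma-1}$ is $\gamma$-H\"older for $\gamma\in]0,1]$. Moreover $\frac2\alpha=r'$, so the first part gives exactly the stated bound on $\Vert\omega_{r-1}\Vert_{\dot{B}^s_{r',1}}$.

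The main obstacle is Step 1: getting the third index equal to $1$, which requires $\ell^{\alpha}$ summability of the $\omega_{\frac r2}$ blocks with $\alpha<1$. I expect the low/high frequency splitting of Step 2 to handle this cleanly, because that argument is insensitive to $q<1$. The real point to verify is therefore that Lemma \ref{Holder} extends to $q<1$; the fallback interpolation described in Step 1 covers the case where it does not.
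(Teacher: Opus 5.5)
Your proof is correct, but it gains the $\ell^1$ summability at a different stage than the paper does.

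\textbf{The paper's route.} The paper never leaves the stated range of Lemma \ref{Holder}. It fixes $s'\in]s,\alpha[$ and splits the dyadic sum of $G(\omega_{\frac r2})$ itself at $2^N$.
\begin{itemize}
\item The low frequencies are bounded by $\Vert G(\omega_{\frac r2})\Vert_{\frac2\alpha}\lesssim\Vert\omega_{\frac r2}\Vert_2^{\alpha}$.
\item The high frequencies are bounded by $2^{N(s-s')}\Vert G(\omega_{\frac r2})\Vert_{\dot B^{s'}_{\frac2\alpha,\infty}}\lesssim 2^{N(s-s')}\Vert\omega_{\frac r2}\Vert^{\alpha}_{\dot B^{s'/\alpha}_{2,\infty}}$, using Lemma \ref{Holder} with $q=\infty$.
\end{itemize}
It then optimizes $N$. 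This is essentially your fallback, with the lower endpoint $s_0=0$ supplied by the trivial $L^{\frac2\alpha}$ bound.

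\textbf{Your route.} You gain the summability before composing, by placing $\omega_{\frac r2}$ in $\dot B^{s/\alpha}_{2,\alpha}$. This costs an extension of Lemma \ref{Holder} to $q=\alpha<1$. That extension is true, and you can close it instead of leaving it as a check.

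In the finite-difference proof, the only place where $q\ge1$ could matter is the bound
$$\int\big(\Vert a(\cdot+y)-a\Vert_2|y|^{-\sigma}\big)^{\alpha}|y|^{-3}{\rm d}y\lesssim\Vert a\Vert^{\alpha}_{\dot B^{\sigma}_{2,\alpha}}.$$
This follows from two facts:
\begin{itemize}
\item $\Vert\Delta_ja(\cdot+y)-\Delta_ja\Vert_2\lesssim\min(1,2^j|y|)\Vert\Delta_ja\Vert_2$, by Bernstein;
\item the $\alpha$-subadditivity $(\sum_jx_j)^{\alpha}\le\sum_jx_j^{\alpha}$.
\end{itemize}
After the change of variables $z=2^jy$, what remains is $\int_{\mathbb{R}^3}\min(1,|z|)^{\alpha}|z|^{-\sigma\alpha-3}{\rm d}z$. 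This integral is finite precisely because $0<\sigma<1$.

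The reverse inequality (Besov norm controlled by the difference norm) is needed only for $G(a)$ in $\dot B^{s}_{\frac2\alpha,1}$. There the third index is $1$, so the standard argument applies unchanged.

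\textbf{Comparison.} Your route buys a single composition estimate followed by a standard $L^2$--$\dot H^1$ interpolation of $\omega_{\frac r2}$, at the price of working in a quasi-Banach Besov space. The paper's route avoids $q<1$ entirely, at the price of the auxiliary exponent $s'$.

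Both arguments tacitly use $G(0)=0$, which holds for $G(\tau)=\tau|\tau|^{\frac2{r'}-1}$. The paper needs it for its low-frequency bound. You need it so that $G(\omega_{\frac r2})\in L^{\frac2\alpha}$, and hence $G(\omega_{\frac r2})$ is a legitimate element of the homogeneous Besov space.
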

\begin{proof}
	Since $s\in]0,\alpha[$, we can choose suitable $s'\in]s,\alpha[$, according to Lemma \ref{Holder}, we have
	$$
	\begin{aligned}
		&\left\Vert G(\omega_{\frac r 2})\right\Vert_{\dot{B}^s_{\frac 2 \alpha,1}}\\
		=&\sum_{j\leq N}2^{js}\left\Vert\Delta_jG(\omega_{\frac r 2})\right\Vert_{\frac 2 \alpha}+\sum_{j> N}2^{js}\left\Vert\Delta_jG(\omega_{\frac r 2})\right\Vert_{\frac 2 \alpha}=\sum_{j\leq N}2^{js}\left\Vert G(\omega_{\frac r 2})\right\Vert_{\frac 2 \alpha}+\sum_{j> N}2^{j(s-s')}2^{js'}\left\Vert\Delta_jG(\omega_{\frac r 2})\right\Vert_{\frac 2 \alpha}\\
		\lesssim&2^{Ns}\left\Vert\omega_{\frac r 2}\right\Vert_2^{\alpha}+\sum_{j> N}2^{j(s-s')}\left\Vert G(\omega_{\alpha})\right\Vert_{\dot{B}^s_{\frac 2 \alpha,\infty}}\lesssim 2^{Ns}\left\Vert\omega_{\frac r 2}\right\Vert_2^{\frac 2 {r'}}+2^{N(s-s')}\left\Vert\omega_{\frac r 2}\right\Vert_{\dot{B}^{\frac {s'} \alpha}_{2,\infty}}^{\alpha}\\
		\lesssim&2^{Ns}\left\Vert\omega_{\frac r 2}\right\Vert_2^{\alpha}+2^{N(s-s')}\left\Vert \omega_{\frac r 2}\right\Vert_2^{\alpha-s'}\left\Vert\nabla\omega_{\alpha}\right\Vert_2^{s'}.
	\end{aligned}
	$$
	Choosing $N$ such that $2^N\sim\left\Vert \nabla\omega_{\frac r 2}\right\Vert_2/\left\Vert\omega_{\frac r 2}\right\Vert_2$, finally we obtain that
	$$\Vert G(\omega_{\frac r 2})\Vert_{\dot{B}^s_{\frac 2 \alpha,1}}\lesssim\left\Vert\nabla\omega_{\frac r 2}\right\Vert_2^s\left\Vert\omega_{\frac r 2}\right\Vert_2^{\alpha-s}.$$
	
	In particular, since $\omega_{r-1}=G(\omega_{\frac r 2})$ with $G(z)=z|z|^{-2\alpha(r)}$, it is easy to check that $G$ is $2/r'$-order H\"{o}lder. Applying above result with $\alpha=\frac 2 {r'}$, thus the second inequality is proved.
\end{proof}

\section{Estimate of $(\omega,v^3)$: Case $p>4$}
\subsection{Estimate of $\omega$}
\quad Recall the equation of $\omega$ \eqref{crul}
$$
\partial_t\omega+v\cdot\nabla\omega-\Delta\omega=\partial_3 v^3\cdot\omega+\partial_2v^3\partial_3v^1-\partial_1v^3\partial_3v^2.
$$
By virtue of Lemma \ref{L-norm estiamte}, we obtain
\begin{equation}\label{curl L^r}
	\begin{aligned}
		&\frac 1 r\frac{\rm d}{{\rm d}t}\Vert\omega_{\frac r 2}\Vert_2^2+\frac{4(r-1)}{r^2}\Vert\nabla\omega_{\frac r 2}\Vert_2^2\\
		=&\int\partial_3v^3|\omega|^r{\rm d}x+\int\partial_2v^3\partial_3v^1\omega_{r-1}{\rm d}x-\int\partial_1v^3\partial_3v^2\omega_{r-1}{\rm d}x=: I_1+I_2+I_3.
	\end{aligned}
\end{equation}
For $I_1$, take $r$ sufficiently close to $2$, then for all $ p\in[2,\infty[$ according to isotropic Bony's decomposition and Lemma \ref{Sobolev type} we easily deduce that
\begin{equation}\label{I1}
	\begin{aligned}
		|I_1|&\lesssim\Vert\partial_3v^3\Vert_{\dot{B}^{-\frac 1 2+\frac 2 p}_{2,\infty}}\Vert(\omega_{\frac r 2})^2\Vert_{\dot{B}^{\frac 1 2-\frac 2 p}_{2,1}}\lesssim\Vert v^3\Vert_{\dot{B}^{\frac 1 2+\frac 2 p}_{2,\infty}}\Vert \omega_{\frac r 2}\Vert_{\dot{B}^{1-\frac 2 p}_{2,2}}\Vert\nabla\omega_{\frac r 2}\Vert_2\\
		&\lesssim\Vert v^3\Vert_{\dot{B}^{\frac 1 2+\frac 2 p}_{2,\infty}}\Vert\omega_{\frac r 2}\Vert_2^{2-\frac 2 p}\Vert\nabla\omega_{\frac r 2}\Vert_2^{\frac 2 p}.
	\end{aligned}
\end{equation}
The estimate of $I_3$ is similar to $I_2$ and therefore will be omitted. $I_2$ can be written as
$$I_2=-\int\partial_2v^3\partial_1\Delta_{\rm h}^{-1}\partial_3^2v^3\cdot\omega_{r-1}{\rm d}x-\int\partial_2v^3\partial_2\Delta_{\rm h}^{-1}\partial_3\omega\cdot\omega_{r-1}{\rm d}x,$$
the problem can be concluded by below two terms
\begin{equation}\label{I21 and I22}
	I_{21}:=\int\partial_hv^3|\nabla_{\rm h}|^{-1}\partial_3^2v^3\cdot\omega_{r-1}{\rm d}x\ {\rm and}\ I_{22}:=\int\partial_hv^3|\nabla_{\rm h}|^{-1}\partial_3\omega\cdot\omega_{r-1}{\rm d}x.
\end{equation}
Using Bony's decomposition and the Leibnitz formula, we write
$$
\begin{aligned}
	\partial_hv^3\cdot\omega_{r-1}&=T(\partial_h v^3,\omega_{r-1})+R(\partial_h v^3,\omega_{r-1})+T(\omega_{r-1},\partial_h v^3)\\
	&=\partial_{\rm h}T(\omega_{r-1},v^3)+A(v^3,\omega)\ {\rm with}\\
	A(v^3,\omega)&=T(\partial_h v^3,\omega_{r-1})+R(\partial_h v^3,\omega_{r-1})-T(\partial_h \omega_{r-1},v^3).
\end{aligned}
$$

\noindent$\bullet$ Estimate of $I_{21}$.\\
Integrating by parts, we have
$$I_{21}=\int|\nabla_{\rm h}|^{-1}\partial_3^2v^3\cdot A(v^3,\omega){\rm d}x-\int\partial_{\rm h}|\nabla_{\rm h}|^{-1}\partial_3^2v^3\cdot T(\omega_{r-1},v^3){\rm d}x.$$
According to Proposition \ref{refine Sobolev} for all $p> 4$, we have
\begin{equation}\label{T(w,a) estimate}
	\begin{aligned}
		\Vert T(\omega_{r-1},v^3)\Vert_{\dot{H}^{3\alpha(r)}}&\lesssim\Vert\omega_{r-1}\Vert_{\dot{B}^{\frac 3 r-2-\frac 2 p}_{\infty,2}}\Vert v^3\Vert_{\dot{B}^{\frac 1 2+\frac 2 p}_{2,\infty}}\lesssim \Vert v^3\Vert_{\dot{B}^{\frac 1 2+\frac 2 p}_{2,\infty}}\Vert\omega_{r-1}\Vert_{\dot{B}^{1-\frac 2 p}_{r',2}} \\
		&\lesssim\Vert v^3\Vert_{\dot{B}^{\frac 1 2+\frac 2 p}_{2,\infty}}\Vert\nabla\omega_{\frac r 2}\Vert_2^{1-\frac 2 p}\Vert\omega_{\frac r 2}\Vert_2^{\frac 2 {r'}-1+\frac 2 p}
	\end{aligned}
\end{equation}
Choosing $\varepsilon>0$ small enough, by Lemma \ref{embed2} $\dot{B}^{\frac 1 2-3\varepsilon}_{(\frac 3 2-\frac 1 r-\varepsilon)^{-1},1}\hookrightarrow(\dot{B}^{\frac 1 r-\frac 1 2-\theta-2\varepsilon}_{(\frac 3 2-\frac 1 r-\varepsilon)^{-1},1})_{\rm h}(\dot{B}^{1-\frac 1 r+\theta-\varepsilon}_{(\frac 3 2-\frac 1 r-\varepsilon)^{-1},1})_{\rm v}\hookrightarrow\dot{H}^{-1+3\alpha(r)-\theta,\theta}$, then for all $p> 4$,
\begin{equation}\label{A(a,w) estimate}
	\begin{aligned}
		&\Vert A(v^3,\omega)
		\Vert_{\dot{B}^{\frac 1 2-3\varepsilon}_{(\frac 3 2-\frac 1 r-\varepsilon)^{-1},1}}\\
		\leq& \Vert T(\partial_h v^3,\omega_{r-1})\Vert_{\dot{B}^{\frac 1 2-3\varepsilon}_{(\frac 3 2-\frac 1 r-\varepsilon)^{-1},1}}+\Vert R(\partial_h v^3,\omega_{r-1})\Vert_{\dot{B}^{\frac 1 2-3\varepsilon}_{(\frac 3 2-\frac 1 r-\varepsilon)^{-1},1}}+\Vert T(\partial_h \omega_{r-1},v^3)\Vert_{\dot{B}^{\frac 1 2-3\varepsilon}_{(\frac 3 2-\frac 1 r-\varepsilon)^{-1},1}}\\
		\lesssim&\Vert\partial_{\rm h}v^3\Vert_{\dot{B}^{-\frac 1 2+\frac 2 p-3\varepsilon}_{(\frac 1 2-\varepsilon)^{-1},\infty}}\Vert \omega_{r-1}\Vert_{\dot{B}^{1-\frac 2 p}_{r',1}}+\Vert\partial_{\rm h}\omega_{r-1}\Vert_{\dot{B}^{-\frac 2 p}_{r',1}}\Vert v^3\Vert_{\dot{B}^{\frac 1 2+\frac 2 p-3\varepsilon}_{(\frac 1 2-\varepsilon)^{-1},\infty}}\lesssim\Vert v^3\Vert_{\dot{B}^{\frac 1 2+\frac 2 p}_{2,\infty}}\Vert\omega_{r-1}\Vert_{\dot{B}^{1-\frac 2 p}_{r',1}}\\
		\lesssim&\Vert v^3\Vert_{\dot{B}^{\frac 1 2+\frac 2 p}_{2,\infty}}\Vert\nabla\omega_{\frac r 2}\Vert_2^{1-\frac 2 p}\Vert\omega_{\frac r 2}\Vert_2^{\frac 2 {r'}-1+\frac 2 p}.
	\end{aligned}
\end{equation}
Therefore by \eqref{T(w,a) estimate} and \eqref{A(a,w) estimate} we infer that
\begin{equation}\label{I21}
	\begin{aligned}
		|I_{21}|&\lesssim\Vert\partial_3^2 v^3\Vert_{\mathcal{H}^{\theta,r}}\Vert T(\omega_{r-1},v^3)\Vert_{\dot{H}^{3\alpha(r)-\theta,\theta}}+\Vert|\nabla_{\rm h}^{-1}\partial_3^2 v^3\Vert_{\dot{H}^{1-3\alpha(r)+\theta,-\theta}}\Vert A(v^3,\omega)\Vert_{\dot{H}^{-1+3\alpha(r)-\theta,\theta}}\\
		&\lesssim \Vert v^3\Vert_{\dot{B}^{\frac 1 2+\frac 2 p}_{2,\infty}}\Vert\nabla\omega_{\frac r 2}\Vert_2^{1-\frac 2 p}\Vert\omega_{\frac r 2}\Vert_2^{\frac 2 {r'}-1+\frac 2 p}\Vert\nabla\partial_3 v^3\Vert_{\mathcal{H}^{\theta,r}}.
	\end{aligned}
\end{equation}

\noindent$\bullet$ Estimate of $I_{22}$.\\
Integrating by parts, we have
$$I_{22}=\int|\nabla_{\rm h}|^{-1}\partial_3\omega\cdot A(v^3,\omega){\rm d}x-\int\partial_{\rm h}|\nabla_{\rm h}|^{-1}\partial_3\omega\cdot T(\omega_{r-1},v^3){\rm d}x.$$
According to Proposition \ref{refine Sobolev} for all $p> 4$, we obtain
\begin{equation}\label{T(w,a) L-norm estimate}
	\begin{aligned}
		\Vert T(\omega_{r-1},v^3)\Vert_{r'}&\leq
		\Vert T(\omega_{r-1},v^3)\Vert_{\dot{B}^0_{r',1}}
		\lesssim\Vert\omega_{r-1}\Vert_{\dot{B}^{\frac 3 r-2-\frac 2 p}_{\infty,1}}\Vert v^3\Vert_{\dot{B}^{2-\frac 3 r+\frac 2 p}_{r',\infty}}\lesssim \Vert v^3\Vert_{\dot{B}^{\frac 1 2+\frac 2 p}_{2,\infty}}\Vert\omega_{r-1}\Vert_{\dot{B}^{1-\frac 2 p}_{r',2}} \\
		&\lesssim\Vert v^3\Vert_{\dot{B}^{\frac 1 2+\frac 2 p}_{2,\infty}}\Vert\nabla\omega_{\frac r 2}\Vert_2^{1-\frac 2 p}\Vert\omega_{\frac r 2}\Vert_2^{\frac 2 {r'}-1+\frac 2 p}.
	\end{aligned}
\end{equation}
Since $\dot{B}^{\frac 1 2}_{(\frac 3 2-\frac 1 r)^{-1},1}\hookrightarrow L^{(\frac 3 2-\frac 1 r)^{-1}}_{\rm h}(\dot{B}^{\frac 1 2}_{(\frac 3 2-\frac 1 r)^{-1},1})_{\rm v}\hookrightarrow L^{(\frac 3 2-\frac 1 r)^{-1}}_{\rm h}(\dot{B}^{0}_{r',1})_{\rm v}\hookrightarrow L^{r'}_{\rm v}L^{(\frac 3 2-\frac 1 r)^{-1}}_{\rm h}$, similarly we have for $p>4$
\begin{equation}\label{A(a,w) L-norm estimate}
	\begin{aligned}
		&\Vert A(v^3,\omega)
		\Vert_{\dot{B}^{\frac 1 2}_{(\frac 3 2-\frac 1 r)^{-1},1}}\\
		\lesssim&\Vert\partial_{\rm h}v^3\Vert_{\dot{B}^{-\frac 1 2+\frac 2 p}_{2,\infty}}\Vert \omega_{r-1}\Vert_{\dot{B}^{1-\frac 2 p}_{r',1}}+\Vert\partial_{\rm h}\omega_{r-1}\Vert_{\dot{B}^{-\frac 2 p}_{r',1}}\Vert v^3\Vert_{\dot{B}^{\frac 1 2+\frac 2 p}_{2,\infty}}\lesssim\Vert v^3\Vert_{\dot{B}^{\frac 1 2+\frac 2 p}_{2,\infty}}\Vert\omega_{r-1}\Vert_{\dot{B}^{1-\frac 2 p}_{r',1}}\\
		\lesssim&\Vert v^3\Vert_{\dot{B}^{\frac 1 2+\frac 2 p}_{2,\infty}}\Vert\nabla\omega_{\frac r 2}\Vert_2^{1-\frac 2 p}\Vert\omega_{\frac r 2}\Vert_2^{\frac 2 {r'}-1+\frac 2 p}.
	\end{aligned}
\end{equation}
Therefore by Lemma \ref{Sobolev type}, \eqref{T(w,a) L-norm estimate} and \eqref{A(a,w) L-norm estimate} we infer that for $p>4$
\begin{equation}\label{I22 p>4}
	\begin{aligned}
		|I_{22}|&\lesssim\Vert\partial_3\omega\Vert_r\Vert T(\omega_{r-1},v^3)\Vert_{r'}+\Vert |\nabla_{\rm h}|^{-1}\partial\omega\Vert_{L^r_{\rm v}L^{(\frac 1 r-\frac 1 2)^{-1}}_{\rm h}}\Vert A(v^3,\omega)\Vert_{L^{r'}_{\rm v}L^{(\frac 3 2-\frac 1 r)^{-1}}_{\rm h}}\\
		&\lesssim \Vert v^3\Vert_{\dot{B}^{\frac 1 2+\frac 2 p}_{2,\infty}}\Vert\nabla\omega_{\frac r 2}\Vert_2^{1-\frac 2 p}\Vert\omega_{\frac r 2}\Vert_2^{\frac 2 {r'}-1+\frac 2 p}\Vert\partial\omega\Vert_r\lesssim\Vert v^3\Vert_{\dot{B}^{\frac 1 2+\frac 2 p}_{2,\infty}}\Vert\nabla\omega_{\frac r 2}\Vert_2^{2-\frac 2 p}\Vert\omega_{\frac r 2}\Vert_2^{\frac 2 p}.
	\end{aligned}
\end{equation}
Subsuming the estimates \eqref{I1}, \eqref{I21} and \eqref{I22 p>4} into \eqref{curl L^r}, we obtain
\begin{equation}\label{w estimate}
	\begin{aligned}
		&\frac 1 r\frac{\rm d}{{\rm d}t}\Vert\omega_{\frac r 2}\Vert_2^2+\frac{4(r-1)}{r^2}\Vert\nabla\omega_{\frac r 2}\Vert_2^2\\
		\leq&\Vert v^3\Vert_{\dot{B}^{\frac 1 2+\frac 2 p}_{2,\infty}}(\Vert\nabla\omega_{\frac r 2}\Vert_2^{2-\frac 2 p}\Vert\omega_{\frac r 2}\Vert_2^{\frac 2 p}+\Vert\nabla\omega_{\frac r 2}\Vert_2^{1-\frac 2 p}\Vert\omega_{\frac r 2}\Vert_2^{\frac 2 {r'}-1+\frac 2 p}\Vert\nabla\partial_3 v^3\Vert_{\mathcal{H}^{\theta,r}}).
	\end{aligned}
\end{equation}

\subsection{Estimate of $v^3$}
\quad Recall the equation of $v^3$ \eqref{v3}
$$
\partial_t\partial_3 v^3+v\cdot\nabla\partial_3 v^3-\Delta\partial_3 v^3=-\partial_3 v\cdot\nabla v^3+\partial_3\partial_3\Delta^{-1}\left(\sum_{l,m=1}^3\partial_m v^l\partial_l v^m\right).
$$
Taking the $\mathcal{H}^{\theta,r}$ inner product of the $\partial_3v^3$ equation yields that
\begin{equation}\label{v3 H-norm}
	\begin{aligned}
		&\frac 1 2\frac {\rm d} {{\rm d}t}\Vert \partial_3v^3\Vert_{\mathcal{H}^{\theta,r}}^2+\Vert \nabla\partial_3v^3\Vert_{\mathcal{H}^{\theta,r}}^2=-\sum_{n=1}^3\left(Q_n(v,v)|\partial_3v^3\right)_{\mathcal{H}^{\theta,r}}\ {\rm with}\\
		&Q_1(v,v)\triangleq({\rm Id}+\partial_3^2\Delta^{-1})(\partial_3v^3)^2+\partial_3^2\Delta^{-1}\left(\sum_{l,m=1}^2\partial_lv^m\partial_mv^l\right),\\
		&Q_2(v,v)\triangleq({\rm Id}+2\partial_3^2\Delta^{-1})\left(\sum_{l=1}^2\partial_lv^3\partial_3v^l\right) {\rm and}\ Q_3(v,v)\triangleq v\cdot\nabla\partial_3 v^3.
	\end{aligned}
\end{equation}
Moreover we have for any $\delta_1,\delta_2\in\mathbb{R}$
$$
\begin{aligned}
	(a|b)_{\mathcal{H}^{\theta,r}}&=\sum_{k,l\in\mathbb{Z}}2^{2k(-3\alpha(r)+\theta)}2^{-2l\theta}(\Delta_{k}^{\rm h}\Delta_l^{\rm v}a|\Delta_{k}^{\rm h}\Delta_l^{\rm v}b)\\
	&\leq\Vert a \Vert_{(\dot{B}^{-3\alpha(r)+\theta+\delta_1}_{2,\infty})_{\rm h}(\dot{B}^{-\theta+\delta_2}_{2,\infty})_{\rm v}}
	\Vert b \Vert_{(\dot{B}^{-3\alpha(r)+\theta-\delta_1}_{2,1})_{\rm h}(\dot{B}^{-\theta-\delta_2}_{2,1})_{\rm v}}
\end{aligned}
$$
For simplicity, we denote $(\ \cdot\ |\ \cdot\ )$ as the inner product in $L^2$.

\noindent$\bullet$ Estimate of $\left(Q_1(v,v)|\partial_3v^3\right)_{\mathcal{H}^{\theta,r}}$.\\
According to Lemma \ref{Sobolev type} and Lemma \ref{anisotropic product}, taking $r$ sufficiently close to $2$ we have
\begin{equation}\label{Q1 estimate}
	\begin{aligned}
		&\left(Q_1(v,v)|\partial_3v^3\right)_{\mathcal{H}^{\theta,r}}\\
		\lesssim& \Vert\partial_3v^3\Vert_{(\dot{B}^{\frac 2 p-3\alpha(r)+\theta}_{2,\infty})_{\rm h}(\dot{B}^{-\theta -\frac 1 2+3\alpha(r)}_{2,\infty})_{\rm v}} \Vert(\omega+\partial_3v^3)(\omega+\partial_3v^3)\Vert_{(\dot{B}^{-\frac 2 p-3\alpha(r)+\theta}_{2,1})_{\rm h}(\dot{B}^{\frac 1 2-\theta-3\alpha(r)}_{2,1})_{\rm v}}\\
		\lesssim&\Vert v^3\Vert_{\dot{B}^{\frac 1 2+\frac 2 p}_{2,\infty}} \left(\Vert\omega\Vert_{\dot{H}^{\frac 1 2(1-3\alpha(r)+\theta-\frac 2 p),\frac 1 2(1-\theta-3\alpha(r))}}+\Vert\partial_3v^3\Vert_{\dot{H}^{-3\alpha(r)+\theta+\frac 1 2(1+3\alpha(r)-\theta-\frac 2 p),-\theta+\frac 1 2(1+\theta-3\alpha(r))}}\right)^2\\
		\lesssim&\Vert v^3\Vert_{\dot{B}^{\frac 1 2+\frac 2 p}_{2,\infty}} \left(\Vert\omega\Vert_{\dot{H}^{1-\frac 1 p-3\alpha(r)}}+\Vert|\nabla|^{1-\frac 1 p}\partial_3v^3\Vert_{\mathcal{H}^{\theta,r}}\right)^2\\
		\lesssim&\Vert v^3\Vert_{\dot{B}^{\frac 1 2+\frac 2 p}_{2,\infty}} \left(\Vert\omega_{\frac r 2}\Vert_2^{2(2\alpha(r)+\frac 1 p)}\Vert\nabla\omega_{\frac r 2}\Vert_2^{2-\frac 2 p}+\Vert\partial_3v^3\Vert^{\frac 2 p}_{\mathcal{H}^{\theta,r}}\Vert\nabla\partial_3v^3\Vert^{2-\frac 2 p}_{\mathcal{H}^{\theta,r}}\right).\\
	\end{aligned}
\end{equation}

\noindent$\bullet$ Estimate of $\left(Q_2(v,v)|\partial_3v^3\right)_{\mathcal{H}^{\theta,r}}$.\\
\quad The estimate of  $\left(Q_2(v,v)|\partial_3v^3\right)_{\mathcal{H}^{\theta,r}}$ can de reduced to
$$\left(\partial_3 v^l\cdot\partial_lv^3|\partial_3 v^3\right)_{\mathcal{H}^{\theta,r}}\ {\rm with}\ l=1,2.$$
According to Lemma \ref{anisotropic product}, for $p>4$ we have
$$
\begin{aligned}
	&\left|\left(\partial_3 v^l\cdot\partial_lv^3|\partial_3 v^3\right)_{\mathcal{H}^{\theta,r}}\right|\\
	\lesssim&\Vert\partial_3 v^3\Vert_{\dot{H}^{-3\alpha(r)+\theta+(1-3\alpha(r)-\frac 2 p),-\theta+(3\alpha(r)+\frac 2 p)}}\Vert\partial_l v^3\partial_3 v^l\Vert_{\dot{H}^{\theta-1+\frac 2 p,-\theta-3\alpha(r)-\frac 2 p}}\\
	\lesssim&\Vert\nabla\partial_3v^3\Vert_{\mathcal{H}^{\theta,r}}\Vert\partial_lv^3\Vert_{(\dot{B}^{\frac 1 2-\theta}_{2,\infty})_{\rm v}(\dot{B}^{\theta-1+\frac 2 p}_{2,2})_{\rm h}}\Vert\partial_3 v^l\Vert_{(\dot{B}^{-3\alpha(r)-\frac 2 p}_{2,1})_{\rm v}(\dot{B}^{1}_{2,1})_{\rm h}}\\
	\lesssim&\Vert v^3\Vert_{\dot{B}^{\frac 1 2+\frac 2 p}_{2,\infty}} \Vert\nabla\partial_3v^3\Vert_{\mathcal{H}^{\theta,r}} \Vert v^l\Vert_{(\dot{B}^{1}_{2,1})_{\rm h}(\dot{B}^{1-3\alpha(r)-\frac 2 p}_{2,1})_{\rm v}}.
\end{aligned}
$$
Thanks to Proposition \ref{v^h} we get that
\begin{equation}\label{Q2 estimate}
	\left(Q_2(v,v)|\partial_3v^3\right)_{\mathcal{H}^{\theta,r}}\lesssim\Vert v^3\Vert_{\dot{B}^{\frac 1 2+\frac 2 p}_{2,\infty}} \Vert\nabla\partial_3v^3\Vert_{\mathcal{H}^{\theta,r}}\left(
	\Vert\omega_{\frac r 2}\Vert_2^{2\alpha(r)+\frac 2 p}\Vert\nabla\omega_{\frac r 2}\Vert_2^{1-\frac 2 p}+\Vert\partial_3v^3\Vert_{\mathcal{H}^{\theta,r}}^{\frac 2 p}\Vert\nabla\partial_3v^3\Vert_{\mathcal{H}^{\theta,r}}^{1-\frac 2 p}
	\right).
\end{equation}

\noindent$\bullet$ Estimate of $\left(Q_3(v,v)|\partial_3v^3\right)_{\mathcal{H}^{\theta,r}}$.\\
\quad We start with the following lemma:
\begin{lemm}\label{v^h}
	The following inequality holds true.
	$$
	\begin{aligned}
		&\left|\left(v^{\rm h}\cdot\nabla_{\rm h}\partial_3 v^3|\partial_3 v^3\right)_{\mathcal{H}^{\theta,r}}\right|\lesssim
		\Vert v^3\Vert_{\dot{B}^{\frac 1 2+\frac 2 p}_{2,\infty}}
		\left(\left\Vert\nabla_{\rm h}v^{\rm h}\right\Vert_{\dot{H}^{\frac 1 2-3\alpha(r)+\theta,\frac 1 2-\frac 1 p-\theta}}^2\right.\\
		&\quad\quad\left.+\left\Vert|\nabla|^{1-\frac 1 p}\partial_3 v^3\right\Vert_{\mathcal{H}^{\theta,r}}^2
		+\Vert\nabla\partial_3v^3\Vert_{\mathcal{H}^{\theta,r}}\left\Vert v^{\rm h}\right\Vert_{\left(\dot{B}^{1}_{2,1}\right)_{\rm h}(\dot{B}^{1-3\alpha(r)-\frac 2 p}_{2,1})_{\rm v}}\right).
	\end{aligned}
	$$
\begin{proof}
	Since the space $\mathcal{H}^{\theta,r}$ is equivalent to the anisotropic Besov space $(\dot{B}^{-3\alpha(r)+\theta}_{2,2})_{\rm h}(\dot{B}^{-\theta}_{2,2})_{\rm v}$, it follows that
	\begin{equation}\label{inner of H}
		\left(v^{\rm h}\cdot\nabla_{\rm h}\partial_3v^3|\partial_3v^3\right)_{\mathcal{H}^{\theta, r}}=\sum_{k,l\in\mathbb{Z}}2^{2k(-3\alpha(r)+\theta)}2^{-2l\theta}\left(\Delta_k^{\rm h}\Delta_l^{\rm v}(v^{\rm h}\cdot\nabla_{\rm h}\partial_3v^3)|\Delta_k^{\rm h}\Delta_l^{\rm v}\partial_3v^3\right)
	\end{equation}
	Applying Bony's decomposition(see Proposition \ref{isotropic Bony}) to $v^{\rm h}\cdot\nabla_{\rm h}\partial_3v^3$ for both horizontal and vertical variables, we write that
	\begin{equation}\label{Bony}
		\begin{aligned}
			&v^{\rm h}\cdot\nabla_{\rm h}\partial_3v^3=(T^{\rm h}+R^{\rm h}+\tilde{T}^{\rm h})(T^{\rm v}+R^{\rm v}+\tilde{T}^{\rm v})(v^{\rm h},\nabla_{\rm h}\partial_3 v^3)=T^{\rm h}T^{\rm v}(v^{\rm h},\ \nabla_{\rm h}\partial_3 v^3)+A+B\ {\rm with}\\
			&A\triangleq T^{\rm h}R^{\rm v}(v^{\rm h},\ \nabla_{\rm h}\partial_3 v^3)+T^{\rm h}\tilde{T}^{\rm v}(v^{\rm h},\nabla_{\rm h}\partial_3 v^3)\ {\rm and}\ B\triangleq (R^{\rm h}+\tilde{T}^{\rm h})(T^{\rm v}+R^{\rm v}+\tilde{T}^{\rm v})(v^{\rm h},\ \nabla_{\rm h}\partial_3 v^3).
		\end{aligned}		
	\end{equation}
	$\bullet$ Estimate $\left(\Delta_k^{\rm h}\Delta_l^{\rm v}T^{\rm h}T^{\rm v}(v^{\rm h},\ \nabla_{\rm h}\partial_3v^3)|\Delta_k^{\rm h}\Delta_l^{\rm v}\partial_3v^3\right)$.\\
By virtue of the properties of the support to the Fourier transform, we write that
	$$
	\begin{aligned}
		&\left(\Delta_k^{\rm h}\Delta_l^{\rm v}T^{\rm h}T^{\rm v}(v^{\rm h},\ \nabla_{\rm h}\partial_3v^3)|\Delta_k^{\rm h}\Delta_l^{\rm v}\partial_3v^3\right)\triangleq I_{k,l}^1+I_{k,l}^2+I_{k,l}^3\ {\rm with}\\
		&I_{k,l}^1\triangleq\sum_{\substack{|k'-k|\leq 4\\ |l'-l|\leq 4}}\left(\left[\Delta_k^{\rm h}\Delta_l^{\rm v},\ S^{\rm h}_{k'-1} S^{\rm v}_{l'-1}v^{\rm h}\right]\Delta_{k'}^{\rm h}\Delta_{l'}^{\rm v}\nabla_{\rm h}\partial_3v^3|\Delta_k^{\rm h}\Delta_l^{\rm v}\partial_3v^3\right),\\
		&I_{k,l}^2\triangleq \sum_{\substack{|k'-k|\leq 4\\ |l'-l|\leq 4}}\left(\left(S_{k'-1}^{\rm h}S_{l'-1}^{\rm v}v^{\rm h}-S_{k-1}^{\rm h}S_{l-1}^{\rm v}v^{\rm h}\right)\Delta_{k'}^{\rm h}\Delta_{l'}^{\rm v}\Delta_{k}^{\rm h}\Delta_{l}^{\rm v}\nabla_{\rm h}\partial_3v^3|\Delta_k^{\rm h}\Delta_l^{\rm v}\partial_3v^3\right),\\
		&I_{k,l}^3\triangleq-\frac 1 2\left(S_{k-1}^{\rm h}S_{l-1}^{\rm v}{\rm div}_{\rm h}v^{\rm h}\Delta_{k}^{\rm h}\Delta_{l}^{\rm v}\partial_3v^3|\Delta_k^{\rm h}\Delta_l^{\rm v}\partial_3v^3\right).
	\end{aligned}
	$$
	The standard commutator estimate (see \cite{BCD}) ensures that
	$$
		|I_{k,l}^1|\lesssim \sum_{\substack{|k'-k|\leq 4\\ |l'-l|\leq 4}}\left(2^{-k}\left\Vert S_{k'-1}^{\rm h}S_{l'-1}^{\rm v}\nabla_{\rm h}v^{\rm h}\right\Vert_\infty+2^{-l}\left\Vert S_{k'-1}^{\rm h}S_{l'-1}^{\rm v}\partial_3v^{\rm h}\right\Vert_\infty\right)\Vert\Delta_{k'}^{\rm h}\Delta_{l'}^{\rm v}\nabla_{\rm h}\partial_3v^3\Vert_2 \Vert\Delta_k^{\rm h}\Delta_l^{\rm v}\partial_3v^3\Vert_2.
	$$
	Using Lemma \ref{Bernstein}, we get
	$$
	\left\Vert S_{k'-1}^{\rm h}S_{l'-1}^{\rm v}\nabla_{\rm h}v^{\rm h}\right\Vert_\infty\lesssim2^{k'(\frac 1 2+3\alpha(r)-\theta)}2^{l'(\frac 1 p+\theta)} c_{k',l'}\Vert\nabla_{\rm h}v^{\rm h}\Vert_{\dot{H}^{\frac 1 2-3\alpha(r)+\theta,\frac 1 2-\frac 1 p+\theta}},
	$$
	$$
	\left\Vert S_{k'-1}^{\rm h}S_{l'-1}^{\rm v}\partial_3v^{\rm h}\right\Vert_\infty\lesssim2^{l'(\frac 1 2+3\alpha(r)+\frac 2 p)}d_{l'}\Vert v^{h}\Vert_{(\dot{B}_{2,1}^{1})_{\rm h}(\dot{B}_{2,1}^{1-3\alpha(r)-\frac 2 p})_{\rm v}}.
	$$
	Here and in what follows, we always denote $\left(c_{k,l}\right)_{k,l\in\mathbb{Z}^2}$(resp. $\left(d_{k,l}\right)_{k,l\in\mathbb{Z}^2}$) a generic element of the sphere in $l^2(\mathbb{Z}^2)$(resp. $l^1(\mathbb{Z}^2)$), and $\left(c_{k}\right)_{k\in\mathbb{Z}}$(resp. $\left(d_{k}\right)_{k\in\mathbb{Z}}$) a generic element of the sphere in $l^2(\mathbb{Z})$(resp. $l^1(\mathbb{Z})$).\\
From the above inequalities we infer that
	$$
	\begin{aligned}
		&|I_{k,l}^1|\\
		\lesssim& \sum_{\substack{|k'-k|\leq 4\\ |l'-l|\leq 4}}c_{k',l'}2^{2k'(3\alpha(r)-\theta)}2^{l'(-\frac 1 2+\frac 2 p+2\theta)}\Vert\nabla_{\rm h}v^{\rm h}\Vert_{\dot{H}^{\frac 1 2-3\alpha(r)+\theta,\frac 1 2-\frac 1 p+\theta}}\cdot 2^{l(\frac 1 2-\frac 2 p +\varepsilon)}2^{-k\varepsilon}\Vert\partial_3v^3\Vert_{(\dot{B}^{-\frac 1 2+\frac 2 p-\varepsilon}_{2,\infty})_{\rm v}(\dot{B}^{\varepsilon}_{2,2})_{\rm h}} \\
		&\quad \quad \quad \quad \cdot c_{k',l'}2^{k'\varepsilon} 2^{-l'\varepsilon}\Vert \partial_3 v^3\Vert_{\dot{H}^{\frac 1 2-3\alpha(r)+\theta-\varepsilon,\frac 1 2-\frac 1 p+\theta+\varepsilon}}\\
		&+\sum_{\substack{|k'-k|\leq 4\\ |l'-l|\leq 4}}2^{-l}2^{k'(1-\frac 2 p)}2^{l'(1+3\alpha(r)+\frac 2 p)}d_{l'}\Vert v^{h}\Vert_{(\dot{B}_{2,1}^{1})_{\rm h}(\dot{B}_{2,1}^{1-3\alpha(r)-\frac 2 p})_{\rm v}}c_{k'}\Vert\nabla_{\rm h}v^3\Vert_{(\dot{B}^{\frac 1 2}_{2,\infty})_{\rm v}(\dot{B}^{-1+\frac 2 p}_{2,2})_{\rm h}}\\
		&\quad \quad \quad \quad \cdot c_{k,l}2^{-k(1-\frac 2 p-6\alpha(r)+2\theta)}2^{-l(\frac 2 p+3\alpha(r)-2\theta)}\Vert\partial_3v^3\Vert_{\dot{H}^{1-\frac 2 p-6\alpha(r)+2\theta,\frac 2 p+3\alpha(r)-2\theta}},
	\end{aligned}
	$$
	applying the H\"{o}lder inequality and Lemma \ref{embed2}, we have
	\begin{equation}
		\begin{aligned}
			|I_{k,l}^1|
			\lesssim&
			d_{k,l}2^{2k(3\alpha(r)-\theta)}2^{2l\theta}\Vert v^3\Vert_{\dot{B}^{\frac 1 2+\frac 2 p}_{2,\infty}}
			\left(\left\Vert\nabla_{\rm h}v^{\rm h}\right\Vert_{\dot{H}^{\frac 1 2-3\alpha(r)+\theta,\frac 1 2-\frac 1 p-\theta}}^2\right.\\
			&+\left.\left\Vert|\nabla|^{1-\frac 1 p}\partial_3 v^3\right\Vert_{\mathcal{H}^{\theta,r}}^2
			+\Vert\nabla\partial_3v^3\Vert_{\mathcal{H}^{\theta,r}}\left\Vert v^{\rm h}\right\Vert_{\left(\dot{B}^{1}_{2,1}\right)_{\rm h}(\dot{B}^{1-3\alpha(r)-\frac 2 p}_{2,1})_{\rm v}}\right).
		\end{aligned}		
	\end{equation}
	By the similar argument we can obtain the same estimate for $I_{k,l}^2$ and $I_{k,l}^3$, we thus conclude that

	\begin{equation}\label{TT estimate}
		\begin{aligned}
			|\left(\Delta_k^{\rm h}\Delta_l^{\rm v}T^{\rm h}T^{\rm v}(v^{\rm h},\ \nabla_{\rm h}\partial_3v^3)|\Delta_k^{\rm h}\Delta_l^{\rm v}\partial_3v^3\right)|
			\lesssim&
			d_{k,l}2^{2k(3\alpha(r)-\theta)}2^{2l\theta}\Vert v^3\Vert_{\dot{B}^{\frac 1 2+\frac 2 p}_{2,\infty}}
			\left(\left\Vert\nabla_{\rm h}v^{\rm h}\right\Vert_{\dot{H}^{\frac 1 2-3\alpha(r)+\theta,\frac 1 2-\frac 1 p-\theta}}^2\right.\\
			&+\left.\left\Vert|\nabla|^{1-\frac 1 p}\partial_3 v^3\right\Vert_{\mathcal{H}^{\theta,r}}^2
			+\Vert\nabla\partial_3v^3\Vert_{\mathcal{H}^{\theta,r}}\left\Vert v^{\rm h}\right\Vert_{\left(\dot{B}^{1}_{2,1}\right)_{\rm h}(\dot{B}^{1-3\alpha(r)-\frac 2 p}_{2,1})_{\rm v}}\right).
		\end{aligned}		
	\end{equation}
	
	\noindent$\bullet$ Estimate of $\left(\Delta_k^{\rm h}\Delta_l^{\rm v}A|\Delta_k^{\rm h}\Delta_l^{\rm v}\partial_3v^3\right)$.\\
	Applying Lemma \ref{Bernstein}, we have that
	\begin{equation}\label{terms in A}
		\begin{gathered}
			\Vert S_{k'-1}^{\rm h}\Delta_{l'}^{\rm v}v^{\rm h}\Vert_{L^\infty_{\rm h}L^2_{\rm v}}\lesssim c_{l'}2^{-l'(1-3\alpha(r)-\frac 2 p)}\Vert v^{\rm h}\Vert_{(\dot{B}_{2,1}^{1})_{\rm h}(\dot{B}_{2,1}^{1-3\alpha(r)-\frac 2 p})_{\rm v}},\\
			\Vert\Delta_{k'}^{\rm h}S_{l'-1}^{\rm v}\nabla_{\rm h}\partial_3 v^3\Vert_{L^2_{\rm h}L^\infty_{\rm v}}\lesssim c_{k'} 2^{k'(1-\frac 2 p)}2^{l'}\Vert v^3\Vert_{(\dot{B}^{\frac 1 2}_{2,\infty})_{\rm v}(\dot{B}^{\frac 2 p}_{2,2})_{\rm h}}.
		\end{gathered}
	\end{equation}
	Also considering the support to the Fourier transform to the terms in $T^{\rm h}R^{\rm v}(v^{\rm h},\nabla_{\rm h}\partial_3v^3)$, we have
	$$
	\begin{aligned}
		&\Vert\Delta_k^{\rm h}\Delta_l^{\rm v}T^{\rm h}R^{\rm v}(v^{\rm h},\ \nabla_{\rm h}\partial_3 v^3)\Vert_2\\
		\lesssim&2^{\frac l 2}\sum_{\substack{|k'-k|\leq 4\\ l'\geq l-3}}\Vert S_{k'-1}^{\rm h}\Delta_{l'}^{\rm v}v^{\rm h}\Vert_{L^\infty_{\rm h}L^2_{\rm v}}\Vert\Delta_{k'}^{\rm h}\tilde{\Delta}_{l'}^{\rm v}\nabla_{\rm h}\partial_3 v^3\Vert_2\\
		\lesssim&2^{\frac l 2}\sum_{\substack{|k'-k|\leq 4\\ l'\geq l-3}}c_{k'}d_{l'}2^{2k'(\frac 1 p+3\alpha(r)-\theta)}2^{-l'(1-2\theta)}\left\Vert v^{\rm h}\right\Vert_{\left(\dot{B}^{1}_{2,1}\right)_{\rm h}(\dot{B}^{1-3\alpha(r)-\frac 2 p}_{2,1})_{\rm v}}\Vert\partial_2 v^3\Vert_{\dot{H}^{1-\frac 2 p-6\alpha(r)+2\theta,\frac 2 p+3\alpha(r)-2\theta}}\\
		\lesssim& c_k\cdot d_l\cdot2^{2k(\frac 1 p+3\alpha(r)-\theta)}2^{-l(\frac 1 2-2\theta)}\Vert\nabla\partial_3v^3\Vert_{\mathcal{H}^{\theta,r}}\left\Vert v^{\rm h}\right\Vert_{\left(\dot{B}^{1}_{2,1}\right)_{\rm h}(\dot{B}^{1-3\alpha(r)-\frac 2 p}_{2,1})_{\rm v}}.
	\end{aligned}
	$$
	Therefore according to Lemma \ref{embed2}, we deduce that
	$$\Vert\Delta_{k}^{\rm h}\Delta_l^{\rm v}\partial_3v^3\Vert_2\lesssim c_k\cdot 2^{-k\frac 2 p}2^{\frac l 2}\Vert v^3\Vert_{(\dot{B}^{\frac 1 2}_{2,\infty})_{\rm v}(\dot{H}^{\frac 2 p})_{\rm h}}\lesssim c_k\cdot 2^{-k\frac 2 p}2^{\frac l 2}\Vert v^3\Vert_{\dot{B}^{\frac 1 2+\frac 2 p}_{2,\infty}},
	$$
	we obtain
	\begin{equation}\label{TR estimate}
		\begin{aligned}
			\left|\left(\Delta_k^{\rm h}\Delta_l^{\rm v}T^{\rm h}R^{\rm v}(v^{\rm h},\ \nabla_{\rm h}\partial_3 v^3)|\Delta_{k}^{\rm h}\Delta_l^{\rm v}\partial_3v^3\right)\right|\lesssim& d_{k,l}2^{2k(3\alpha(r)-\theta)}2^{2l\theta}\Vert v^3\Vert_{\dot{B}^{\frac 1 2+\frac 2 p}_{2,\infty}}\\
			&\times\Vert\nabla\partial_3v^3\Vert_{\mathcal{H}^{\theta,r}}\left\Vert v^{\rm h}\right\Vert_{\left(\dot{B}^{1}_{2,1}\right)_{\rm h}(\dot{B}^{1-3\alpha(r)-\frac 2 p}_{2,1})_{\rm v}}.
		\end{aligned}
	\end{equation}
Along the same lines, we infer from \eqref{terms in A} that
	\begin{equation}\label{TT-1 estimate}
		\begin{aligned}
			&\left|\left(\Delta_k^{\rm h}\Delta_l^{\rm v}T^{\rm h}\tilde{T}^{\rm v}(v^{\rm h},\ \nabla_{\rm h}\partial_3 v^3)|\Delta_{k}^{\rm h}\Delta_l^{\rm v}\partial_3v^3\right)\right|\\
			\lesssim & \sum_{\substack{|k'-k|\leq 4\\ |l'-l|\leq 4}}\Vert S_{k'-1}^{\rm h}\Delta_{l'}^{\rm v}v^{\rm h}\Vert_{L^\infty_{\rm h}L^2_{\rm v}}\Vert\Delta_{k'}^{\rm h}S_{l'-1}^{\rm v}\nabla_{\rm h}\partial_3 v^3\Vert_{L^2_{\rm h}L^\infty_{\rm v}}\Vert\Delta_{k}^{\rm h}\Delta_l^{\rm v}\partial_3v^3\Vert_2\\
			\lesssim& \sum_{\substack{|k'-k|\leq 4\\ |l'-l|\leq 4}} c_{l'}2^{-l'(1-3\alpha(r)-\frac 2 p)}\Vert v^{\rm h}\Vert_{(\dot{B}_{2,1}^{1})_{\rm h}(\dot{B}_{2,1}^{1-3\alpha(r)-\frac 2 p})_{\rm v}}c_{k'} 2^{k'(1-\frac 2 p)}2^{l'}\Vert v^3\Vert_{(\dot{B}^{\frac 1 2}_{2,\infty})_{\rm v}(\dot{B}^{\frac 2 p}_{2,2})_{\rm h}}\\
			&\times c_{k,l}2^{k(-1+\frac 2 p+6\alpha(r)-2\theta)}2^{l(-\frac 2 p-3\alpha(r)+2\theta)}\Vert\partial_3 v^3\Vert_{\dot{H}^{1-\frac 2 p-6\alpha(r)+2\theta,\frac 2 p+3\alpha(r)-2\theta}}\\
			\lesssim&d_{k,l}\cdot2^{2k(3\alpha(r)-\theta)}2^{2l\theta}\Vert v^3\Vert_{\dot{B}^{\frac 1 2+\frac 2 p}_{2,\infty}}\Vert\nabla\partial_3v^3\Vert_{\mathcal{H}^{\theta,r}}\left\Vert v^{\rm h}\right\Vert_{\left(\dot{B}^{1}_{2,1}\right)_{\rm h}(\dot{B}^{1-3\alpha(r)-\frac 2 p}_{2,1})_{\rm v}},
		\end{aligned}
	\end{equation}
	from \eqref{TR estimate} and \eqref{TT-1 estimate}, we conclude that
	\begin{equation}\label{A estimate}
		\left|\left(\Delta_k^{\rm h}\Delta_l^{\rm v}A|\Delta_{k}^{\rm h}\Delta_l^{\rm v}\partial_3v^3\right)\right|\lesssim d_{k,l}\cdot2^{2k(3\alpha(r)-\theta)}2^{2l\theta}\Vert v^3\Vert_{\dot{B}^{\frac 1 2+\frac 2 p}_{2,\infty}}\Vert\nabla\partial_3v^3\Vert_{\mathcal{H}^{\theta,r}}\left\Vert v^{\rm h}\right\Vert_{\left(\dot{B}^{1}_{2,1}\right)_{\rm h}(\dot{B}^{1-3\alpha(r)-\frac 2 p}_{2,1})_{\rm v}}.
	\end{equation}
	
	\noindent$\bullet$ Estimate of $\left(\Delta_k^{\rm h}\Delta_l^{\rm v}B|\Delta_k^{\rm h}\Delta_l^{\rm v}\partial_3v^3\right)$.\\
Notice that the support of the  Fourier transform to terms in $R^{\rm h}R^{\rm v}(v^{\rm h},\ \nabla_{\rm h}\partial_3v^3)$. Since $\theta\in]0,\ \frac 1 2-\frac 1 p[$, we can choose $\varepsilon>0$ small enough such that $2\theta-6\alpha(r)+1-\varepsilon>0,\ 1-\frac 2 p-2\theta+\varepsilon>0$, and $\frac 1 2+\frac 2 p-\varepsilon>0$, then apply the anisotropic Bernstein inequality to obtain
	$$
	\begin{aligned}
		&\Vert\Delta_k^{\rm h}\Delta_l^{\rm v}R^{\rm h}R^{\rm v}(v^{\rm h},\ \nabla_{\rm h}\partial_3 v^3)\Vert_2\\
		\lesssim&2^k2^{\frac l 2}\sum_{\substack{k'\geq k-3\\ l'\geq l-3}}\Vert\Delta_{k'}^{\rm h}\Delta_{l'}^{\rm v}v^{\rm h}\Vert_2 \Vert\tilde{\Delta}_{k'}^{\rm h}\tilde{\Delta}_{l'}^{\rm v}\nabla_{\rm h}\partial_3v^3\Vert_2\\
		\lesssim& 2^{k}2^{\frac l 2}\sum_{\substack{k'\geq k-3\\ l'\geq l-3}}d_{k',l'}2^{-k'(1-6\alpha(r)+2\theta-\varepsilon)}2^{-l'(1-\frac 2 p-2\theta+\varepsilon)}\Vert\nabla_{\rm h}v^{\rm h}\Vert_{\dot{H}^{\frac 1 2-3\alpha(r)+\theta,\frac 1 2-\frac 1 p-\theta}}\Vert\partial_3 v^3\Vert_{\dot{H}^{\frac 1 2-3\alpha(r)+\theta-\varepsilon,\frac 1 2-\frac 1 p-\theta+\varepsilon}}\\
		\lesssim& d_{k,l}\cdot2^{2k(3\alpha(r)-\theta)}2^{-l(\frac 1 2-\frac 2 p-2\theta)}2^{k\varepsilon}2^{-l\varepsilon}\Vert\nabla_{\rm h}v^h\Vert_{\dot{H}^{\frac 1 2-3\alpha(r)+\theta,\frac 1 2-\frac 1 p-\theta}}\Vert|\nabla|^{1-\frac 1 p}\partial_3v^3\Vert_{\mathcal{H}^{\theta,r}},
	\end{aligned}
	$$
	$$
	\begin{aligned}
		&\Vert\Delta_k^{\rm h}\Delta_l^{\rm v}R^{\rm h}T^{\rm v}(v^{\rm h},\ \nabla_{\rm h}\partial_3 v^3)\Vert_2\\
		\lesssim&2^k\sum_{\substack{k'\geq k-3\\ |l'-l|\leq 4}}\Vert\Delta_{k'}^{\rm h}S_{l'-1}^{\rm v}v^{\rm h}\Vert_{L^2_{\rm h}L^\infty_{\rm v}} \Vert\tilde{\Delta}_{k'}^{\rm h}\Delta_{l'}^{\rm v}\nabla_{\rm h}\partial_3v^3\Vert_2\\
		\lesssim& 2^{k}\sum_{\substack{k'\geq k-3\\ |l'-l|\leq 4}}d_{k',l'}2^{-k'(1-6\alpha(r)+2\theta-\varepsilon)}2^{-l'(1-\frac 2 p-2\theta+\varepsilon)}\Vert\nabla_{\rm h}v^{\rm h}\Vert_{\dot{H}^{\frac 1 2-3\alpha(r)+\theta,\frac 1 2-\frac 1 p-\theta}}\Vert\partial_3 v^3\Vert_{\dot{H}^{\frac 1 2-3\alpha(r)+\theta-\varepsilon,\frac 1 2-\frac 1 p-\theta+\varepsilon}}\\
		\lesssim& d_{k,l}\cdot2^{2k(3\alpha(r)-\theta)}2^{-l(\frac 1 2-\frac 2 p-2\theta)}2^{k\varepsilon}2^{-l\varepsilon}\Vert\nabla_{\rm h}v^h\Vert_{\dot{H}^{\frac 1 2-3\alpha(r)+\theta,\frac 1 2-\frac 1 p-\theta}}\Vert|\nabla|^{1-\frac 1 p}\partial_3v^3\Vert_{\mathcal{H}^{\theta,r}},\\
		&\Vert\Delta_{k}^{\rm h}\Delta_{l}^{\rm v}\partial_3 v^3\Vert_2\lesssim 2^{-k\varepsilon}2^{l(\frac 1 2-\frac 2 p+\varepsilon)}\Vert\partial_3v^3\Vert_{(\dot{B}^{\varepsilon}_{2,\infty})_{\rm h}(\dot{B}^{-\frac 1 2+\frac 2 p-\varepsilon}_{2,\infty})_{\rm v}}\lesssim 2^{-k\varepsilon}2^{l(\frac 1 2-\frac 2 p+\varepsilon)}\Vert v^3\Vert_{\dot{B}^{\frac 1 2+\frac 2 p}_{2,\infty}}.
	\end{aligned}
	$$
	The remaining terms in $B$ can be estimated by the same way. Therefore we obtain by Young's inequality
	\begin{equation}\label{B estimate}
		\begin{aligned}
			&\left|\left(\Delta_k^{\rm h}\Delta_l^{\rm v}B|\Delta_{k}^{\rm h}\Delta_l^{\rm v}\partial_3v^3\right)\right|\\
			&\lesssim d_{k,l}\cdot2^{2k(3\alpha(r)-\theta)}2^{2l\theta}\Vert v^3\Vert_{\dot{B}^{\frac 1 2+\frac 2 p}_{2,\infty}}\left(\left\Vert\nabla_{\rm h}v^{\rm h}\right\Vert_{\dot{H}^{\frac 1 2-3\alpha(r)+\theta,\frac 1 2-\frac 1 p-\theta}}^2.+\left\Vert|\nabla|^{1-\frac 1 p}\partial_3 v^3\right\Vert_{\mathcal{H}^{\theta,r}}^2\right).
		\end{aligned}
	\end{equation}
	Combining \eqref{inner of H}, \eqref{TT estimate}, \eqref{A estimate} and \eqref{B estimate}, we conclude the result of Lemma \ref{v^h}.
\end{proof}
\end{lemm}

Thanks to Lemma \ref{v^h}, applying H\"{o}lder's inequality of anisotropic Sobolev spaces and Proposition \ref{anisoropic v^h}, we infer that
\begin{equation}\label{v^h in flow term}
	\begin{aligned}
		&\left|\left(v^{\rm h}\cdot\nabla_{\rm h}\partial_3 v^3|\partial_3 v^3\right)_{\mathcal{H}^{\theta,r}}\right|\lesssim
		\Vert v^3\Vert_{\dot{B}^{\frac 1 2+\frac 2 p}_{2,\infty}}
		\left(\left\Vert\omega_{\frac r 2}\right\Vert_2
		^{2(2\alpha(r)+\frac 1 p)}\left\Vert\nabla\omega_{\frac r 2}\right\Vert_2^{2-\frac 2 p}\right.\\
		&\quad\quad\left.+\left(
		\left\Vert\omega_{\frac r 2}\right\Vert_2
		^{2(\alpha(r)+\frac 1 p)}\left\Vert\nabla\omega_{\frac r 2}\right\Vert_2^{1-\frac 2 p}
		+\Vert\partial_3v^3\Vert_{\mathcal{H}^{\theta,r}}^{\frac 2 p}\Vert\nabla\partial_3v^3\Vert_{\mathcal{H}^{\theta,r}}^{1-\frac 2 p}
		\right)\Vert\nabla\partial_3v^3\Vert_{\mathcal{H}^{\theta,r}}\right).
	\end{aligned}
\end{equation}
For $\varepsilon>0$ small enough, we have
\begin{equation}\label{v^3 in flow term}
	\begin{aligned}
		\left|\left(v^3\cdot\partial_3^2 v^3|\partial_3 v^3\right)_{\mathcal{H}^{\theta,r}}\right|
		\lesssim&\Vert\partial_3v^3\Vert_{\dot{H}^{-3\alpha(r)+\theta+1-\frac 2 p-\varepsilon,-\theta+\varepsilon}} \Vert v^3\cdot\partial_3^2v^3\Vert_{\dot{H}^{-3\alpha(r)+\theta-1+\frac 2 p+\varepsilon,-\theta-\varepsilon}}\\
		\lesssim&\Vert|\nabla|^{1-\frac 2 p}\partial_3v^3\Vert_{\mathcal{H}^{\theta,r}}\Vert v^3\Vert_{(\dot{B}^{\frac 2 p+\varepsilon}_{2,\infty})_{\rm h} (\dot{B}^{\frac 1 2-\varepsilon}_{2,\infty})_{\rm v}}\Vert\partial_3^2v^3\Vert_{\mathcal{H}^{\theta,r}}\\
		\lesssim&\Vert v^3\Vert_{\dot{B}^{\frac 1 2+\frac 2 p}_{2,\infty}}\Vert\partial_3v^3\Vert^{\frac 2 p}_{\mathcal{H}^{\theta,r}}\Vert\nabla\partial_3v^3\Vert^{2-\frac 2 p}_{\mathcal{H}^{\theta,r}}.
	\end{aligned}
\end{equation}
Thus combining with \eqref{v^h in flow term} and \eqref{v^3 in flow term}, we get that
\begin{equation}\label{Q3 estimate}
	\begin{aligned}
		&\left|\left(Q_3(v,v)|\partial_3 v^3\right)_{\mathcal{H}^{\theta,r}}\right|\lesssim
		\Vert v^3\Vert_{\dot{B}^{\frac 1 2+\frac 2 p}_{2,\infty}}
		\left(\left\Vert\omega_{\frac r 2}\right\Vert_2
		^{2(2\alpha(r)+\frac 1 p)}\left\Vert\nabla\omega_{\frac r 2}\right\Vert_2^{2-\frac 2 p}\right.\\
		&\quad\quad\left.+\left(
		\left\Vert\omega_{\frac r 2}\right\Vert_2
		^{2(\alpha(r)+\frac 1 p)}\left\Vert\nabla\omega_{\frac r 2}\right\Vert_2^{1-\frac 2 p}
		+\Vert\partial_3v^3\Vert_{\mathcal{H}^{\theta,r}}^{\frac 2 p}\Vert\nabla\partial_3v^3\Vert_{\mathcal{H}^{\theta,r}}^{1-\frac 2 p}
		\right)\Vert\nabla\partial_3v^3\Vert_{\mathcal{H}^{\theta,r}}\right).
	\end{aligned}
\end{equation}
Combining \eqref{Q1 estimate}, \eqref{Q2 estimate} and \eqref{Q3 estimate}, we obtain
\begin{equation}\label{v^3 estimate}
	\begin{aligned}
		&\frac{\rm d}{{\rm d}t}\Vert\partial_3v^3\Vert_{\mathcal{H}^{\theta,r}}^2+\Vert\nabla\partial_3v^3\Vert_{\mathcal{H}^{\theta,r}}^2\\
		\lesssim&\Vert v^3\Vert_{\dot{B}^{\frac 1 2+\frac 2 p}_{2,\infty}} \left\Vert\omega_{\frac r 2}\right\Vert_2
		^{2(2\alpha(r)+\frac 1 p)}\left\Vert\nabla\omega_{\frac r 2}\right\Vert_2^{2-\frac 2 p}\\
		&+\Vert v^3\Vert_{\dot{B}^{\frac 1 2+\frac 2 p}_{2,\infty}}^p\Vert\partial_3v^3\Vert_{\mathcal{H}^{\theta,r}}^2+\Vert v^3\Vert_{\dot{B}^{\frac 1 2+\frac 2 p}_{2,\infty}}^2 \left\Vert\omega_{\frac r 2}\right\Vert_2
		^{4(\alpha(r)+\frac 1 p)}\left\Vert\nabla\omega_{\frac r 2}\right\Vert_2^{2-\frac 2 p}.
	\end{aligned}
\end{equation}
Thanks to \eqref{w estimate} and \eqref{v^3 estimate}, by virtue of the Gronwall argument we prove Theorem \ref{main} in the case $p>4$.

\section{Estimate of $(\omega,v^3)$: Case $2<p\leq4$}

\quad We first present two key lemmas that fundamentally underpin the subsequent estimation.

\begin{lemm}\label{embed4}
	For any $s>0$, we have
	$$
	\left\Vert\left(2^{ks}\Vert\Delta_k^{\rm h}f\Vert_p\right)_k\right\Vert_{l^q(\mathbb{Z})}\lesssim\Vert f\Vert_{\dot{B}^s_{p,q}},\ \left\Vert\left(2^{ls}\Vert\Delta_l^{\rm v}f\Vert_p\right)_l\right\Vert_{l^q(\mathbb{Z})}\lesssim\Vert f\Vert_{\dot{B}^s_{p,q}}.
	$$
\begin{proof}
	According to the supports of the Fourier multipliers $\Delta^{\rm h}_k$ and $\Delta_j$, there exists a $ N_0\in\mathbb{Z}$, such that
	$$
	\begin{aligned}
		\left\Vert\left(2^{ks}\Vert\Delta_k^{\rm h}f\Vert_p\right)_k\right\Vert_{l^q(\mathbb{Z})} &\lesssim
		\left\Vert\left(2^{ks}\sum_{k\leq j+N_0}\Vert\Delta_j\Delta_k^{\rm h}f\Vert_p\right)_k\right\Vert_{l^q(\mathbb{Z})}\\
		&\lesssim
		\left\Vert\left(\sum_{k\leq j+N_0}2^{(k-j)s}2^{js}\Vert\Delta_jf\Vert_p\right)_k\right\Vert_{l^q(\mathbb{Z})}\lesssim \Vert f\Vert_{\dot{B}^s_{p,q}}.
	\end{aligned}
	$$
	The last inequality was obtained by Young's inequality. Similarly, we can get the second inequality along the same line.
\end{proof}
\end{lemm}

\begin{lemm}\label{S operator}
	For any $s<0$, we have
	$$
	\left\Vert\left(2^{ks}\Vert S_k^{\rm h}f\Vert_p\right)_k\right\Vert_{l^q(\mathbb{Z})}\lesssim \left\Vert\left(2^{ks}\Vert \Delta_k^{\rm h}f\Vert_p\right)_k\right\Vert_{l^q(\mathbb{Z})},\
	\left\Vert\left(2^{ls}\Vert S_l^{\rm v}f\Vert_p\right)_l\right\Vert_{l^q(\mathbb{Z})}\lesssim
	\left\Vert\left(2^{ls}\Vert\Delta_l^{\rm v}f\Vert_p\right)_l\right\Vert_{l^q(\mathbb{Z})}.
	$$
\begin{proof}
According to Young's inequality, we have
		$$
	\begin{aligned}
		\left\Vert\left(2^{ks}\Vert S_k^{\rm h}f\Vert_p\right)_k\right\Vert_{l^q(\mathbb{Z})} &\lesssim
		\left\Vert\left(2^{ks}\sum_{k'\leq k-1}\Vert\Delta_k^{\rm h}f\Vert_p\right)_k\right\Vert_{l^q(\mathbb{Z})}\\
		&=
		\left\Vert\left(\sum_{k'\leq k-1}2^{(k-k')s}2^{k's}\Vert\Delta_k^{\rm h}f\Vert_p\right)_k\right\Vert_{l^q(\mathbb{Z})}\lesssim \left\Vert\left(2^{ks}\Vert \Delta_k^{\rm h}f\Vert_p\right)_k\right\Vert_{l^q(\mathbb{Z})}.
	\end{aligned}
	$$
The second inequality can be proved by the same token.
\end{proof}
\end{lemm}

\begin{lemm}\label{embed5}
	For any $r<2$, we have
	$$\Vert a\Vert_{(\dot{B}^0_{r,2})_{\rm h}(\dot{B}^0_{r,2})_{\rm v}}\lesssim\Vert a\Vert_r,\
	\left\Vert\left(
	\Vert \Delta_k^{\rm h}a \Vert_r
	\right)_k\right\Vert_{l^2}\lesssim\Vert a\Vert_r,\ {\rm and}\ \left\Vert\left(
	\Vert \Delta_l^{\rm v}a \Vert_r
	\right)_l\right\Vert_{l^2}\lesssim\Vert a\Vert_r.$$
\end{lemm}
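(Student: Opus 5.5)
The plan is to derive all three inequalities from Littlewood--Paley square-function theorems, followed by a reverse Minkowski inequality that holds precisely because $r\le 2$. Throughout I take $1<r<2$, which is the range in which the lemma is used: $r$ close to $2$. The case $r=1$ fails for square functions and is not needed.

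\textbf{Step 1 (one-directional square functions).} Fix signs $\epsilon=(\epsilon_k)_{k\in\mathbb{Z}}\in\{\pm1\}^{\mathbb{Z}}$ and set $T_\epsilon=\sum_k\epsilon_k\Delta_k^{\rm h}$. This is a Fourier multiplier in the variable $x_{\rm h}\in\mathbb{R}^2$ alone, and its symbol satisfies the Mikhlin condition uniformly in $\epsilon$. So $T_\epsilon$ is bounded on $L^r(\mathbb{R}^2_{\rm h})$ uniformly in $\epsilon$. Applying this for a.e.\ fixed $x_3$ and integrating in $x_3$ (Fubini) gives boundedness on $L^r(\mathbb{R}^3)$. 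Averaging over random signs and using Khintchine's inequality then yields
$$\Big\Vert\Big(\sum_k|\Delta_k^{\rm h}a|^2\Big)^{\frac12}\Big\Vert_r\lesssim\Vert a\Vert_r.$$
The same argument, using a Mikhlin multiplier in $x_3$ alone, gives the analogous bound for $\Delta_l^{\rm v}$.

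\textbf{Step 2 (double square function).} For two independent sign sequences $\epsilon,\epsilon'$, the operator $\sum_{k,l}\epsilon_k\epsilon'_l\Delta_k^{\rm h}\Delta_l^{\rm v}$ equals the composition $T^{\rm h}_\epsilon T^{\rm v}_{\epsilon'}$. Each factor is uniformly bounded on $L^r(\mathbb{R}^3)$ by Step 1, so the composition is too. A double average over both sign families, together with Khintchine's inequality in both indices, gives
$$\Big\Vert\Big(\sum_{k,l}|\Delta_k^{\rm h}\Delta_l^{\rm v}a|^2\Big)^{\frac12}\Big\Vert_r\lesssim\Vert a\Vert_r.$$
An alternative is to iterate the $l^2$-valued Littlewood--Paley inequality.

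\textbf{Step 3 (reverse Minkowski).} Let $g_j\ge0$ and $0<\frac r2\le1$. The $L^{r/2}$ quasi-norm is superadditive on nonnegative functions (reverse Minkowski), so
$$\Big\Vert\sum_j g_j\Big\Vert_{\frac r2}\ge\sum_j\Vert g_j\Vert_{\frac r2}.$$
Take $g_j=|f_j|^2$. Since $\Vert\sum_j|f_j|^2\Vert_{r/2}=\Vert(\sum_j|f_j|^2)^{1/2}\Vert_r^2$ and $\Vert g_j\Vert_{r/2}=\Vert f_j\Vert_r^2$, this becomes
$$\Big(\sum_j\Vert f_j\Vert_r^2\Big)^{\frac12}\le\Big\Vert\Big(\sum_j|f_j|^2\Big)^{\frac12}\Big\Vert_r.$$
Applying this with $f_{k,l}=\Delta_k^{\rm h}\Delta_l^{\rm v}a$ and combining with Step 2 gives the first inequality. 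This uses the fact that the norm of $(\dot{B}^0_{r,2})_{\rm h}(\dot{B}^0_{r,2})_{\rm v}$ is $(\sum_{k,l}\Vert\Delta_k^{\rm h}\Delta_l^{\rm v}a\Vert_r^2)^{1/2}$. Applying it with $f_k=\Delta_k^{\rm h}a$, respectively $f_l=\Delta_l^{\rm v}a$, and combining with Step 1 gives the other two inequalities.

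The main obstacle I expect is Step 2: justifying the product (two-parameter) square-function bound on $L^r$. I would handle it by the randomization argument above, because it reduces everything to uniform Mikhlin bounds for one-variable multipliers and Fubini.
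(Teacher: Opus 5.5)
Your proof is correct, but it is organized differently from the paper's. The paper never forms a two-parameter object; it works slice by slice.
\begin{itemize}
\item It first uses Minkowski's inequality (your Step 3, valid since $r\le 2$) to pull the $l^2_l$ sum inside the $L^r_{\rm h}$ norm.
\item It then applies the one-dimensional embedding $L^r(\mathbb{R})\hookrightarrow\dot{B}^0_{r,2}(\mathbb{R})$ of Lemma \ref{embed3} in $x_3$ for each fixed $x_{\rm h}$, which yields $\Vert(\Vert\Delta_k^{\rm h}a\Vert_r)_k\Vert_{l^2}$.
\item It repeats the same two moves in the horizontal variable: Minkowski to pass $l^2_k$ inside $L^r_{\rm v}$, then the two-dimensional embedding for each fixed $x_3$.
\end{itemize}
The second and third inequalities are just the last part of this chain.

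Your Steps 1 and 3 together are exactly a proof of that one-directional embedding. So the only genuine difference is your Step 2, where you prove the product square-function bound $\Vert(\sum_{k,l}|\Delta_k^{\rm h}\Delta_l^{\rm v}a|^2)^{1/2}\Vert_r\lesssim\Vert a\Vert_r$ and then apply reverse Minkowski once. This is stronger than what the lemma needs, and it relies on the lower bound in the double Khintchine inequality. For $r<2$ that lower bound does not follow from Jensen. It does follow from applying your Step 3 inequality on the probability space of $\epsilon$, then scalar Khintchine in $\epsilon$ (or directly from Kahane's inequality), so there is no gap.

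The paper's iteration is more economical: it needs only one-parameter Littlewood--Paley theory on slices, packaged as a cited embedding. Your route is self-contained from Mikhlin and Khintchine, treats all three inequalities uniformly, and yields a stronger intermediate estimate.

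Your restriction to $1<r<2$ is appropriate. The paper's argument needs it too, since Lemma \ref{embed3} is invoked with $p=r'<\infty$.
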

\begin{proof}
	By Minkowski's inequality and Lemma \ref{embed3}, we obtain that
	$$
	\begin{aligned}
		\Vert a\Vert_{(\dot{B}^0_{r,2})_{\rm h}(\dot{B}^0_{r,2})_{\rm v}}
		\lesssim&
		\left\Vert\left(\left\Vert\left(\left\Vert
		\Delta_k^{\rm h}\Delta_l^{\rm v} a
		\right\Vert_r
		\right)_l
		\right\Vert_{l^2}
		\right)_k
		\right\Vert_{l^2}
		\lesssim
		\left\Vert\left(\left\Vert\left\Vert\left(\left\Vert
		\Delta_k^{\rm h}\Delta_l^{\rm v} a
		\right\Vert_{L^r_{\rm v}}
		\right)_l
		\right\Vert_{l^2}
		\right\Vert_{L^r_{\rm h}}
		\right)_k
		\right\Vert_{l^2}\\
		&\lesssim
		\left\Vert\left(\left\Vert\left\Vert
		\Delta_k^{\rm h}a
		\right\Vert_{L^r_{\rm v}}
		\right\Vert_{L^r_{\rm h}}
		\right)_k
		\right\Vert_{l^2}
		\lesssim
		\left\Vert\left(\left\Vert\left\Vert
		\Delta_k^{\rm h}a
		\right\Vert_{L^r_{\rm h}}
		\right)_k
		\right\Vert_{l^2}
		\right\Vert_{L^r_{\rm v}}\lesssim\Vert a\Vert_{L^r_{\rm v}L^r_{\rm h}}\\
		&\lesssim\Vert a\Vert_r.
	\end{aligned}
	$$
	Similarly, we can prove that the remaining two inequalities also hold.
\end{proof}

\subsection{Estimate of $\omega$}
\quad According to \eqref{curl L^r} and \eqref{I21 and I22}, we thus consider the estimate of $I_1$, $I_{21}$ and $I_{22}$. Recall that
$$I_1=\int\partial_3v^3|\omega|^r{\rm d}x,\ I_{21}:=\int\partial_hv^3|\nabla_{\rm h}|^{-1}\partial_3^2v^3\cdot\omega_{r-1}{\rm d}x\ {\rm and}\ I_{22}:=\int\partial_hv^3|\nabla_{\rm h}|^{-1}\partial_3\omega\cdot\omega_{r-1}{\rm d}x.
$$
Obviously \eqref{I1} is still remained for $2<p\leq4$. According to Lemma \ref{anisotropic product}, utilizing properties of the dual space we have for $\varepsilon>0$ small enough and  $r$ sufficiently close to $2^-$ such that $\varepsilon+\frac 1 {r'}-\frac 1 2>0$, then
\begin{equation}\label{I21 p<4}
	\begin{aligned}
		|I_{21}|&\lesssim\Vert v^3\Vert_{(\dot{B}^{\frac 2 p+\varepsilon}_{2,\infty})_{\rm h} (\dot{B}^{\frac 1 2-\varepsilon}_{2,2})_{\rm v}}
		\Vert |\nabla_{\rm h}|^{-1}\partial_3^2v^3\cdot\omega_{r-1}\Vert_{(\dot{B}^{-\frac 2 p-\varepsilon}_{2,1})_{\rm h} (\dot{B}^{\varepsilon-\frac 1 2}_{2,2})_{\rm v}}\\
		&\lesssim \Vert v^3\Vert_{\dot{B}^{\frac 1 2+\frac 2 p}_{2,\infty}}
		\Vert|\nabla_{\rm h}|^{-1}\partial^2 v^3\Vert_{(\dot{B}^{-\delta+1}_{2,2})_{\rm h} (\dot{B}^{0}_{2,2})_{\rm v}}
		\Vert\omega_{r-1}\Vert_{(\dot{B}^{\frac 1 2+\frac 1 r-\frac 2 p-\varepsilon}_{r',1})_{\rm h} (\dot{B}^{\varepsilon+\frac 1 {r'}-\frac 1 2}_{r',1})_{\rm v}}\\
		&\lesssim \Vert v^3\Vert_{\dot{B}^{\frac 1 2+\frac 2 p}_{2,\infty}}
		\Vert|\nabla_{\rm h}|^{-\delta}\partial^2 v^3\Vert_2
		\Vert\omega_{r-1}\Vert_{\dot{B}^{1-\frac 2 p}_{r',1}}\\
		&\lesssim \Vert v^3\Vert_{\dot{B}^{\frac 1 2+\frac 2 p}_{2,\infty}}\Vert\nabla\omega_{\frac r 2}\Vert_2^{1-\frac 2 p}\Vert\omega_{\frac r 2}\Vert_2^{\frac 2 {r'}-1+\frac 2 p}
		\Vert|\nabla_{\rm h}|^{-\delta}\partial^2 v^3\Vert_2.
	\end{aligned}
\end{equation}
The estimate of $I_{22}$ is the following proposition.
\begin{prop}
	For $2<p\leq4$, we have
	\begin{equation}\label{I22 p<4}
		\left|\int\partial_hv^3|\nabla_{\rm h}|^{-1}\partial_3\omega\cdot\omega_{r-1}{\rm d}x\right|
		\lesssim\Vert v^3\Vert_{\dot{B}^{\frac 1 2+\frac 2 p}_{2,\infty}}\Vert\nabla\omega_{\frac r 2}\Vert_2^{2-\frac 2 p}\Vert\omega_{\frac r 2}\Vert_2^{\frac 2 p}.
	\end{equation}
\end{prop}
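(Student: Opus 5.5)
The plan is to reuse the splitting from the case $p>4$,
$$\partial_{\rm h}v^3\,\omega_{r-1}=\partial_{\rm h}T(\omega_{r-1},v^3)+A(v^3,\omega),$$
and integrate by parts as before:
$$I_{22}=\int|\nabla_{\rm h}|^{-1}\partial_3\omega\cdot A(v^3,\omega)\,{\rm d}x-\int\partial_{\rm h}|\nabla_{\rm h}|^{-1}\partial_3\omega\cdot T(\omega_{r-1},v^3)\,{\rm d}x .$$

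\textbf{The paraproduct $T(\omega_{r-1},v^3)$.} Since $\partial_{\rm h}|\nabla_{\rm h}|^{-1}$ is bounded on $L^r$, the second integral is bounded by $\Vert\partial_3\omega\Vert_r\Vert T(\omega_{r-1},v^3)\Vert_{r'}$. For the paraproduct I will repeat \eqref{T(w,a) L-norm estimate}. Its indices only need $\frac 3r-2-\frac 2p<0$, which holds when $r$ is close to $2$ for every $p>2$. This gives
$$\Vert T(\omega_{r-1},v^3)\Vert_{r'}\lesssim\Vert v^3\Vert_{\dot B^{\frac12+\frac2p}_{2,\infty}}\Vert\nabla\omega_{\frac r2}\Vert_2^{1-\frac2p}\Vert\omega_{\frac r2}\Vert_2^{\frac2{r'}-1+\frac2p},$$
using Proposition \ref{refine Sobolev} with $s=1-\frac2p\in]0,\frac2{r'}[$. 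By \eqref{nabla L-r}, $\Vert\partial_3\omega\Vert_r\lesssim\Vert\nabla\omega_{\frac r2}\Vert_2\Vert\omega_{\frac r2}\Vert_2^{\frac2r-1}$, and multiplying out gives exactly the exponents in \eqref{I22 p<4}.

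\textbf{The remainder terms in $A(v^3,\omega)$.} Here $|\nabla_{\rm h}|^{-1}\partial_3\omega$ is placed in $L^r_{\rm v}L^{(\frac1r-\frac12)^{-1}}_{\rm h}$ by horizontal Sobolev embedding. It then suffices to bound $A$ in the dual space $L^{r'}_{\rm v}L^{(\frac32-\frac1r)^{-1}}_{\rm h}$. The pieces $R(\partial_{\rm h}v^3,\omega_{r-1})$ and $T(\partial_{\rm h}\omega_{r-1},v^3)$ go through as in \eqref{A(a,w) L-norm estimate}. For the first, the sum of regularities $\frac12+\frac2p-1+1-\frac2p>0$. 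For the second, $\partial_{\rm h}\omega_{r-1}\in\dot B^{-\frac2p}_{r',1}$ has negative index, so the low-frequency paraproduct is admissible. Both land in $\dot B^{\frac12}_{(\frac32-\frac1r)^{-1},1}$, which embeds into the target space as before.

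\textbf{Main obstacle: $T(\partial_{\rm h}v^3,\omega_{r-1})$.} For $p\le4$, $\partial_{\rm h}v^3\in\dot B^{-\frac12+\frac2p}_{2,\infty}$ has nonnegative index. So the isotropic paraproduct with low frequencies on $\partial_{\rm h}v^3$ cannot be closed, and no isotropic embedding into $L^2_{\rm h}(\dot B^s_{2,\infty})_{\rm v}$ is available. My first attempt avoids this by not estimating this piece in a norm at all. I will pair it directly by duality, decomposing in the horizontal (respectively vertical) variable only. Write
$$\int |\nabla_{\rm h}|^{-1}\partial_3\omega\; T(\partial_{\rm h}v^3,\omega_{r-1})\,{\rm d}x$$
as a sum over $k$ of $S^{\rm h}_{k-1}\partial_{\rm h}v^3\,\Delta^{\rm h}_k\omega_{r-1}$ tested against $\tilde\Delta^{\rm h}_k|\nabla_{\rm h}|^{-1}\partial_3\omega$. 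Then I apply H\"older with the weighted sequence norms.
\begin{itemize}
\item For $S^{\rm h}_{k-1}\partial_{\rm h}v^3$, I trade $\partial_{\rm h}$ for a factor $2^k$ and use Lemma \ref{S operator}. To get a negative weight, I split the regularity of $v^3$ anisotropically through Lemma \ref{embed2}: put $\frac2p-\varepsilon$ horizontally and $\frac12+\varepsilon$ vertically, and control the vertical part in $L^\infty_{\rm v}$ by Bernstein.
\item For $\Delta^{\rm h}_k\omega_{r-1}$, I use Lemma \ref{embed4} with positive index $1-\frac2p$, together with Proposition \ref{refine Sobolev}.
\item For $\tilde\Delta^{\rm h}_k|\nabla_{\rm h}|^{-1}\partial_3\omega$, I use the $l^2$ bound of Lemma \ref{embed5} applied to $\partial_3\omega\in L^r$, after horizontal Bernstein from $L^r_{\rm h}$.
\end{itemize}

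The delicate step is the index bookkeeping. The horizontal weights $2^{k(\cdot)}$ must cancel exactly; this is the scaling check. The exponents $(r,r')$ in the vertical and horizontal directions must be chosen so that only embeddings with strictly positive regularity are used, namely Lemmas \ref{embed1}, \ref{embed2} and \ref{embed4}. If the horizontal split fails, I will fall back to the symmetric vertical split using $\Delta^{\rm v}_l$. Collecting all terms yields \eqref{I22 p<4}.
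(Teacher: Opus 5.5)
\textbf{There is a gap in the main step.} It lies in your treatment of $T(\partial_{\rm h}v^3,\omega_{r-1})$. In the splitting you inherit from the case $p>4$, this is the \emph{isotropic} paraproduct $\sum_j S_{j-1}\partial_{\rm h}v^3\,\Delta_j\omega_{r-1}$. It is not equal to $\sum_k S^{\rm h}_{k-1}\partial_{\rm h}v^3\,\Delta^{\rm h}_k\omega_{r-1}$, so the rewriting on which your three bullets rest is false. You cannot mix the two decompositions:
\begin{enumerate}
\item If you switch to a horizontal Bony decomposition, the companion terms become $R^{\rm h}(\partial_{\rm h}v^3,\omega_{r-1})$ and $T^{\rm h}(\omega_{r-1},\partial_{\rm h}v^3)$. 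The isotropic bounds \eqref{T(w,a) L-norm estimate}--\eqref{A(a,w) L-norm estimate} that you quote for them then no longer apply.
\item If you keep isotropic blocks, your third bullet fails. Localizing to $|\xi|\sim 2^j$ does not localize $|\xi_{\rm h}|$, so $|\nabla_{\rm h}|^{-1}$ produces no factor $2^{-j}$ for horizontal Bernstein to cancel.
\end{enumerate}
The first bullet has a separate defect. A vertical index $\frac12+\varepsilon$ does not give $L^\infty_{\rm v}$ control: vertical Bernstein costs exactly $2^{l/2}$, so the homogeneous space $(\dot B^{\frac12+\varepsilon}_{2,1})_{\rm v}$ does not embed in $L^\infty_{\rm v}$. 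Moreover, the resulting horizontal weight $2^{k(1-\frac2p+\varepsilon)}$ no longer cancels against $\omega_{r-1}\in\dot B^{1-\frac2p}_{r',1}$. The $\varepsilon$ is unnecessary anyway: $\frac2p-1<0$ already, so Lemma \ref{embed2} with $\theta=\frac12$ is what you want.

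\textbf{The repair keeps your structure and is short.} Set $g=|\nabla_{\rm h}|^{-1}\partial_3\omega$ and $q=(\frac1r-\frac12)^{-1}$. H\"older's inequality with $\frac12+\frac1{r'}+\frac1q=1$ horizontally and $\frac1{r'}+\frac1r=1$ vertically gives
\[
\Big|\int g\,T(\partial_{\rm h}v^3,\omega_{r-1})\,{\rm d}x\Big|\le\sum_j\Vert S_{j-1}\partial_{\rm h}v^3\Vert_{L^2_{\rm h}L^\infty_{\rm v}}\Vert\Delta_j\omega_{r-1}\Vert_{r'}\Vert g\Vert_{L^q_{\rm h}L^r_{\rm v}},
\]
and no localization of $g$ is needed. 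Each factor is then controlled as follows:
\begin{enumerate}
\item Vertical Bernstein on isotropic blocks gives $\Vert\Delta_{j'}\partial_{\rm h}v^3\Vert_{L^2_{\rm h}L^\infty_{\rm v}}\lesssim 2^{\frac32 j'}\Vert\Delta_{j'}v^3\Vert_2\lesssim 2^{j'(1-\frac2p)}\Vert v^3\Vert_{\dot B^{\frac12+\frac2p}_{2,\infty}}$. Summing over $j'\le j-2$ yields $2^{j(1-\frac2p)}$; this is where $p>2$ enters.
\item Minkowski's inequality ($q\ge r$) and horizontal Hardy--Littlewood--Sobolev give $\Vert g\Vert_{L^q_{\rm h}L^r_{\rm v}}\lesssim\Vert\partial_3\omega\Vert_r$.
\item The $j$-sum is then $\Vert\omega_{r-1}\Vert_{\dot B^{1-\frac2p}_{r',1}}$, and Proposition \ref{refine Sobolev} together with \eqref{nabla L-r} gives the stated exponents.
\end{enumerate}
Your claims about the other three pieces, $T(\omega_{r-1},v^3)$, $R(\partial_{\rm h}v^3,\omega_{r-1})$ and $T(\partial_{\rm h}\omega_{r-1},v^3)$, are correct, and each indeed only needs $p>2$. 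So the repaired argument proves the proposition uniformly on $]2,4]$.

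\textbf{Comparison with the paper.} The paper abandons the $A$-splitting. It puts $\partial_{\rm h}v^3$ in $\sup_l 2^{l(\frac2p-\frac12)}\Vert\Delta^{\rm v}_l\cdot\Vert_2$ and performs a purely vertical Bony decomposition of $|\nabla_{\rm h}|^{-1}\partial\omega\cdot\omega_{r-1}$. That needs $\frac2p-\frac12>0$ and $\frac1r-\frac2p<0$, which is why the paper must treat $p=4$ separately with a two-directional decomposition. Your repaired route needs no such case distinction.
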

\begin{proof}
	For $2<p<4$, since $\frac 2 p -\frac 1 2>0$, according to Lemma \ref{embed4} we have
	$$
	\begin{aligned}
		&\ \left|\int\partial_hv^3|\nabla_{\rm h}|^{-1}\partial_3\omega\cdot\omega_{r-1}{\rm d}x\right|\\
		&\lesssim
		\left(\sup_{l}2^{l(\frac 2 p-\frac 1 2)}\Vert\Delta_l^{\rm v}\partial_{\rm h}v^3\Vert_2\right)
		\left(\sum_{l}2^{l(\frac 1 2-\frac 2 p)}\Vert\Delta_l^{\rm v}(|\nabla_{\rm h}|^{-1}\partial\omega\cdot\omega_{r-1})\Vert_2\right)\\
		&\lesssim \Vert v^3\Vert_{\dot{B}^{\frac 1 2+\frac 2 p}_{2,\infty}}
		\bigg(\sum_{l}2^{l(\frac 1 2-\frac 2 p)}
		\big(
		\Vert\Delta_l^{\rm v}T^{v}(|\nabla_{\rm h}|^{-1}\partial\omega,\omega_{r-1})\Vert_2
		+\Vert\Delta_l^{\rm v}\tilde{T}^{v}(|\nabla_{\rm h}|^{-1}\partial\omega,\omega_{r-1})\Vert_2\\
		&\quad\quad\quad\quad\quad\quad\quad\quad\quad\quad\quad\quad\quad\quad+\Vert\Delta_l^{\rm v}R^{v}(|\nabla_{\rm h}|^{-1}\partial\omega,\omega_{r-1})\Vert_2
		\big)\bigg)\\
		&=:\Vert v^3\Vert_{\dot{B}^{\frac 1 2+\frac 2 p}_{2,\infty}}
		(K_1+K_2+K_3).
	\end{aligned}
	$$
	Therefore according to Lemma \ref{embed4}, Lemma \ref{S operator}, Lemma \ref{Bernstein} and the Sobolev inequality, we have
	$$
	\begin{aligned}
		K_1&\lesssim
		\sum_{\substack{l\in\mathbb{Z}\\|l'-l|\leq 4}}2^{l(\frac 1 2-\frac 2 p)}\Vert S_{l'-1}^{\rm v}|\nabla_{\rm h}|^{-1}\partial\omega\cdot\Delta_{l'}^{\rm v}\omega_{r-1}\Vert_2\\
		&\lesssim
		\sum_{\substack{l\in\mathbb{Z}\\|l'-l|\leq 4}}2^{l(\frac 1 2-\frac 2 p)}2^{\frac {l'} 2}\Vert S_{l'-1}^{\rm v}|\nabla_{\rm h}|^{-1}\partial\omega\Vert_{L^r_{\rm v}L^{(\frac 1 r-\frac 1 2)^{-1}}_{\rm h}}
		\Vert\Delta_{l'}^{\rm v}\omega_{r-1}\Vert_{r'}\\
		&\lesssim
		\sum_{\substack{l\in\mathbb{Z}\\|l'-l|\leq 4}}
		\Vert|\nabla_{\rm h}|^{-1}\partial\omega\Vert_{L^r_{\rm v}L^{(\frac 1 r-\frac 1 2)^{-1}}_{\rm h}} 2^{l(\frac 1 2-\frac 2 p)}2^{\frac {l'} 2} \Vert\Delta_{l'}^{\rm v}\omega_{r-1}\Vert_{r'}
		\lesssim\Vert\partial\omega\Vert_r
		\left(\sum_{l}2^{l(1-\frac 2 p)}\Vert\Delta_{l'}^{\rm v}\omega_{r-1}\Vert_{r'}\right)\\
		&\lesssim \Vert\partial\omega\Vert_r \Vert\omega_{r-1}\Vert_{\dot{B}^{1-\frac 2 p}_{r',1}},
	\end{aligned}
	$$
	$$
	\begin{aligned}
		K_2&\lesssim
		\sum_{\substack{l\in\mathbb{Z}\\|l'-l|\leq 4}}
		2^{l(\frac 1 2-\frac 2 p)}\Vert S_{l'-1}^{\rm v}\omega_{r-1}\Vert_{L^\infty_{\rm v}L^{r'}_{\rm h}}\cdot
		2^{l'(\frac 1 r-\frac 1 2)}
		\Vert\Delta_{l'}^{\rm v}|\nabla_{\rm h}|^{-1}\partial\omega\Vert_{L^r_{\rm v}L^{(\frac 1 r-\frac 1 2)^{-1}}_{\rm h}}\\
		&\lesssim\Vert\partial\omega\Vert_r
		\left(\sum_{l}2^{l(\frac 1 r-\frac 2 p)}\Vert S_{l-1}^{\rm v}\omega_{r-1}\Vert_{L^\infty_{\rm v}L^{r'}_{\rm h}}
		\right)
		\lesssim\Vert\partial\omega\Vert_r
		\left(\sum_{l}2^{l(\frac 1 r-\frac 2 p)}\Vert \Delta_l^{\rm v}\omega_{r-1}\Vert_{L^\infty_{\rm v}L^{r'}_{\rm h}}
		\right)\\
		&\lesssim
		\Vert\partial\omega\Vert_r
		\left(\sum_{l}2^{l(1-\frac 2 p)}\Vert \Delta_l^{\rm v}\omega_{r-1}\Vert_{r'}
		\right)
		\lesssim\Vert\partial\omega\Vert_r \Vert\omega_{r-1}\Vert_{\dot{B}^{1-\frac 2 p}_{r',1}},\\
		K_3&\lesssim \sum_{l}2^{l(1-\frac 2 p)}
		\Vert\Delta_l^{\rm v}R^{\rm v}(|\nabla_{\rm h}\partial\omega,\omega_{r-1})\Vert_{L^1_{\rm v}L^2_{\rm h}}\\
		&\lesssim \sum_{\substack{l\in\mathbb{Z}\\l'\geq l-3}}
		2^{l(1-\frac 2 p)}\Vert\tilde{\Delta}_{l'}^{\rm v}|\nabla_{\rm h}|^{-1}\partial\omega\Vert_{L^r_{\rm v}L^{(\frac 1 r-\frac 1 2)^{-1}}_{\rm h}}\Vert\Delta_{l'}^{\rm v}\omega_{r-1}\Vert_{r'}\\
		&\lesssim\Vert\partial\omega\Vert_r
		\left(
		\sum_{\substack{l\in\mathbb{Z}\\l'\geq l-3}} 2^{(l-l')(1-\frac 2 p)}2^{l'(1-\frac 2 p)}
		\Vert\Delta_{l'}^{\rm v}\omega_{r-1}\Vert_{r'}
		\right)
		\lesssim\Vert\partial\omega\Vert_r
		\left(
		\sum_{l}2^{l(1-\frac 2 p)}
		\Vert\Delta_{l}^{\rm v}\omega_{r-1}\Vert_{r'}
		\right)\\
		&\lesssim \Vert\partial\omega\Vert_r \Vert\omega_{r-1}\Vert_{\dot{B}^{1-\frac 2 p}_{r',1}}.
	\end{aligned}
	$$
	Combine above three inequalities and Lemma \ref{Sobolev type}, we obtain that when $2<p<4$
	$$
		\left|\int\partial_hv^3|\nabla_{\rm h}|^{-1}\partial_3\omega\cdot\omega_{r-1}{\rm d}x\right|
		\lesssim\Vert v^3\Vert_{\dot{B}^{\frac 1 2+\frac 2 p}_{2,\infty}}\Vert\nabla\omega_{\frac r 2}\Vert_2^{2-\frac 2 p}\Vert\omega_{\frac r 2}\Vert_2^{\frac 2 p}.
	$$
	For $p=4$, we have
	$$
	\begin{aligned}
		\left|\int\partial_hv^3|\nabla_{\rm h}|^{-1}\partial_3\omega\cdot\omega_{r-1}{\rm d}x\right|
		&\lesssim
		\Vert\partial_{\rm h}v^3\Vert_{(\dot{B}^{\varepsilon}_{2,\infty})_{\rm v}(\dot{B}^{-\varepsilon}_{2,1})_{\rm h}}
		\Vert|\nabla_{\rm h}|^{-1}\partial\omega\cdot\omega_{r-1}\Vert_{(\dot{B}^{-\varepsilon}_{2,1})_{\rm v}(\dot{B}^{\varepsilon}_{2,\infty})_{\rm h}}\\
		&\lesssim
		\Vert v^3\Vert_{(\dot{B}^{\varepsilon}_{2,\infty})_{\rm v}(\dot{B}^{1-\varepsilon}_{2,1})_{\rm h}}
		\Vert|\nabla_{\rm h}|^{-1}\partial\omega\cdot\omega_{r-1}\Vert_{(\dot{B}^{-\varepsilon}_{2,1})_{\rm v}(\dot{B}^{\varepsilon}_{2,\infty})_{\rm h}}\\
		&\lesssim
		\Vert v^3\Vert_{\dot{B}^1_{2,\infty}}
		\Vert|\nabla_{\rm h}|^{-1}\partial\omega\cdot\omega_{r-1}\Vert_{(\dot{B}^{-\varepsilon}_{2,1})_{\rm v}(\dot{B}^{\varepsilon}_{2,\infty})_{\rm h}}.
	\end{aligned}
	$$
	By applying the Bony decomposition to $|\nabla_{\rm h}|^{-1}\partial\omega\cdot\omega_{r-1}$ for both horizontal and vertical variables, we have
	$$|\nabla_{\rm h}|^{-1}\partial\omega\cdot\omega_{r-1}=(T^{\rm h}+\tilde{T}^{\rm h}+R^{\rm h})(T^{\rm v}+\tilde{T}^{\rm v}+R^{\rm v})(|\nabla_{\rm h}|^{-1}\partial\omega,\omega_{r-1}).$$
	Choosing $\varepsilon>0$ small enough and $r$ sufficiently close to $2^-$ such that $\frac 1 r-\frac 1 2-\varepsilon<0$, according to Lemma \ref{embed3}, Lemma \ref{embed5} and the support of the Fourier transform of each terms, we compute that
	$$
	\begin{aligned}
		&\Vert T^{\rm  h}(T^{\rm v}+\tilde{T}^{\rm v})(|\nabla_{\rm h}|^{-1}\partial\omega,\omega_{r-1})\Vert_{(\dot{B}^{-\varepsilon}_{2,1})_{\rm v}(\dot{B}^{\varepsilon}_{2,\infty})_{\rm h}}\\
		\lesssim&\sum_{l}\sup_{k}2^{-l\varepsilon}2^{k\varepsilon}\sum_{\substack{|k'-k|\leq 4\\ |l'-l|\leq 4}}
		2^{l'(\frac 1 r-\frac 1 2)}\Vert S_{k'-1}^{\rm h}(S_{l'-1}^{\rm v}+\Delta_{l'}^{\rm v}) |\nabla_{\rm h}|^{-1}\partial\omega\Vert_{L^r_{\rm v}L^{(\frac 1 r-\frac 1 2)^{-1}}_{\rm h}}
		\Vert \Delta_{k'}^{\rm h}(S_{l'-1}^{\rm v}+\Delta_{l'}^{\rm v})\omega_{r-1}\Vert_{L^\infty_{\rm v}L^{r'}_{\rm h}}\\
		\lesssim&
		\Vert\partial\omega\Vert_r
		\sum_{l}\sup_{k}2^{-l\varepsilon}2^{k\varepsilon}\sum_{\substack{|k'-k|\leq 4\\ |l'-l|\leq 4}}
		2^{l'(\frac 1 r-\frac 1 2)}\Vert \Delta_{k'}^{\rm h}(S_{l'-1}^{\rm v}+\Delta_{l'}^{\rm v})\omega_{r-1}\Vert_{L^\infty_{\rm v}L^{r'}_{\rm h}}\\
		\lesssim&
		\Vert\partial\omega\Vert_r
		\Vert\omega_{r-1}\Vert_{(\dot{B}^{\frac 1 r-\frac 1 2-\varepsilon}_{\infty,1})_{\rm v}(\dot{B}^{\varepsilon}_{r',\infty})_{\rm h}}
		\lesssim
		\Vert\partial\omega\Vert_r
		\Vert\omega_{r-1}\Vert_{(\dot{B}^{\frac 1 2-\varepsilon}_{r',1})_{\rm v}(\dot{B}^{\varepsilon}_{r',1})_{\rm h}}
		\lesssim
		\Vert\partial\omega\Vert_r\Vert\omega_{r-1}\Vert_{\dot{B}^{\frac 1 2}_{r',1}},
	\end{aligned}
	$$
	$$
	\begin{aligned}
		&\Vert T^{\rm  h}R^{\rm v}(|\nabla_{\rm h}|^{-1}\partial\omega,\omega_{r-1})\Vert_{(\dot{B}^{-\varepsilon}_{2,1})_{\rm v}(\dot{B}^{\varepsilon}_{2,\infty})_{\rm h}}\\
		\lesssim&
		\sum_{l}\sup_{k}2^{l(\frac 1 2-\varepsilon)}2^{k\varepsilon}\Vert\Delta_k^{\rm h}\Delta_l^{\rm v}T^{\rm  h}R^{\rm v}(|\nabla_{\rm h}|^{-1}\partial\omega,\omega_{r-1})\Vert_{L^1_{\rm v}L^2_{\rm h}}\\
		\lesssim&
		\sum_{l}\sup_{k}\sum_{\substack{|k'-k|\leq 4\\ l'\geq l-3}}2^{l(\frac 1 2-\varepsilon)}2^{k\varepsilon}
		\Vert S_{k'-1}^{\rm h}\tilde{\Delta}_{l'}^{\rm v}|\nabla_{\rm h}|^{-1}\partial\omega\Vert_{L^r_{\rm v}L^{(\frac 1 r-\frac 1 2)^{-1}}}
		\Vert\Delta_{k'}^{\rm h}\Delta_{l'}^{\rm v}\omega_{r-1}\Vert_{r'}\\
		\lesssim&
		\Vert\partial\omega\Vert_r
		\sum_{l}\sup_{k}\sum_{\substack{|k'-k|\leq 4\\ l'\geq l-3}}2^{(l-l')(\frac 1 2-\varepsilon)}2^{(k-k')\varepsilon}\cdot
		2^{l'(\frac 1 2-\varepsilon)}2^{k'\varepsilon}\Vert\Delta_{k'}^{\rm h}\Delta_{l'}^{\rm v}\omega_{r-1}\Vert_{r'}\\
		\lesssim&
		\Vert\partial\omega\Vert_r\Vert\omega_{r-1}\Vert_{(\dot{B}^{\frac 1 2-\varepsilon}_{r',1})_{\rm v}(\dot{B}^{\varepsilon}_{r',\infty})_{\rm h}}
		\lesssim
		\Vert\partial\omega\Vert_r\Vert\omega_{r-1}\Vert_{\dot{B}^{\frac 1 2}_{r',1}},\\
		&\Vert \tilde{T}^{\rm  h}(T^{\rm v}+\tilde{T}^{\rm v})(|\nabla_{\rm h}|^{-1}\partial\omega,\omega_{r-1})\Vert_{(\dot{B}^{-\varepsilon}_{2,1})_{\rm v}(\dot{B}^{\varepsilon}_{2,\infty})_{\rm h}}\\
		\lesssim&
		\sum_{l}\sup_{k}\sum_{\substack{|k'-k|\leq 4\\ |l'-l|\leq 4}}2^{-(l-l')\varepsilon}2^{(k-k')\varepsilon}
		2^{l'(\frac 1 2-\frac 1 r)} 2^{k'(1-\frac 2 r)}\Vert\Delta_{k'}^{\rm h}(S_{l'-1}^{\rm v}+\Delta_{l'}^{\rm v})|\nabla_{\rm h}|^{-1}\partial\omega\Vert_2\\
		&\quad\quad\quad\quad\quad\quad\quad\quad\quad\quad\quad\quad\quad\quad\quad\quad\quad
		\times 2^{l'(\frac 1 r-\frac 1 2-\varepsilon)}2^{k'(\varepsilon-\frac 2 {r'})}
		\Vert S_{k'-1}^{\rm h}(S_{l'-1}^{\rm v}+\Delta_{l'}^{\rm v})\omega_{r-1}\Vert_\infty\\
		\lesssim&
		\Vert|\nabla_{\rm h}|^{-1}\partial\omega\Vert_{(\dot{B}^{\frac 1 2-\frac 1 r}_{2,\infty})_{\rm v}(\dot{B}^{1-\frac 2 r}_{2,\infty})_{\rm h}}
		\Vert\omega_{r-1}\Vert_{(\dot{B}^{\frac 1 r-\frac 1 2-\varepsilon}_{\infty,1})_{\rm v}(\dot{B}^{\varepsilon-\frac 2 {r'}}_{\infty,\infty})_{\rm h}}
		\lesssim
		\Vert\partial\omega\Vert_r\Vert\omega_{r-1}\Vert_{\dot{B}^{\frac 1 2}_{r',1}},\\
		&\Vert \tilde{T}^{\rm  h}R^{\rm v}(|\nabla_{\rm h}|^{-1}\partial\omega,\omega_{r-1})\Vert_{(\dot{B}^{-\varepsilon}_{2,1})_{\rm v}(\dot{B}^{\varepsilon}_{2,\infty})_{\rm h}}\\
		\lesssim&
		\sum_{l}\sup_{k}2^{l(\frac 1 2-\varepsilon)}2^{k(\varepsilon+\frac 2 r-1)}\Vert\Delta_k^{\rm h}\Delta_l^{\rm v}\tilde{T}^{\rm  h}R^{\rm v}(|\nabla_{\rm h}|^{-1}\partial\omega,\omega_{r-1})\Vert_{L^r_{\rm h}L^1_{\rm v}}\\
		\lesssim&
		\sum_{l}\sup_{k}\sum_{\substack{|k'-k|\leq 4\\ l'\geq l-3}}2^{(l-l')(\frac 1 2-\varepsilon)}2^{(k-k')(\frac 2 r-1+\varepsilon)}
		\cdot 2^{k'}\Vert \tilde{\Delta}_{l'}^{\rm v}\Delta_{k'}^{\rm h}|\nabla_{\rm h}|^{-1}\partial\omega\Vert_r\\
		&\quad\quad\quad\quad\quad\quad\quad\quad\quad\quad\quad\quad\quad\quad\quad\quad\quad
		\times 2^{l'(\frac 1 2-\varepsilon)}2^{k'(\frac 2 r-2+\varepsilon)}\Vert S_{k'-1}^{\rm h}\Delta_{l'}^{\rm v}\omega_{r-1}\Vert_{L^\infty_{\rm h}L^{r'}_{\rm v}}\\
		\lesssim&
		\big(\sup_k 2^k\Vert \Delta_k^{\rm h}|\nabla_{\rm h}|^{-1}\partial\omega\Vert_r\big)
		\Vert\omega_{r-1}\Vert_{(\dot{B}^{\frac 1 2-\varepsilon}_{r',1})_{\rm v}(\dot{B}^{\frac 2 r-2+\varepsilon}_{\infty,\infty})_{\rm h}}
		\lesssim
		\Vert\partial\omega\Vert_r\Vert\omega_{r-1}\Vert_{\dot{B}^{\frac 1 2}_{r',1}},\\
		&\Vert R^{\rm h}(T^{\rm v}+R^{\rm v})(|\nabla_{\rm h}|^{-1}\partial\omega,\omega_{r-1})\Vert_{(\dot{B}^{-\varepsilon}_{2,1})_{\rm v}(\dot{B}^{\varepsilon}_{2,\infty})_{\rm h}}\\
		\lesssim&
		\sum_{l}\sup_{k}\sum_{\substack{k'\geq k-3\\ l'\geq l -4}}
		2^{l(\frac 1 2-\varepsilon)}2^{k\varepsilon}
		\Vert\tilde{\Delta}_{k'}^{\rm h}(S_{l'-1}^{\rm v}+\tilde{\Delta}_{l'}^{\rm v})|\nabla_{\rm h}|^{-1}\partial\omega\Vert_{L^r_{\rm v}L^{(\frac 1 r-\frac 1 2)^{-1}}_{\rm h}}\Vert\Delta_{k'}^{\rm h}\Delta_{l'}^{\rm v}\omega_{r-1}\Vert_{r'}\\
		\lesssim&
		\Vert\partial\omega\Vert_r
		\big(\sum_{l}\sup_{k}\sum_{\substack{k'\geq k-3\\ l'\geq l-4}}
		2^{(l-l')(\frac 1 2-\varepsilon)}2^{(k-k')\varepsilon}
		\cdot2^{l'(\frac 1 2-\varepsilon)}2^{k'\varepsilon}\Vert\Delta_{k'}^{\rm h}\Delta_{l'}^{\rm v}\omega_{r-1}\Vert_{r'}\big)
		\lesssim
		\Vert\partial\omega\Vert_r\Vert\omega_{r-1}\Vert_{(\dot{B}^{\frac 1 2-\varepsilon}_{r',1})_{\rm v}(\dot{B}^{\varepsilon}_{r',\infty})_{\rm h}}\\
		\lesssim&
		\Vert\partial\omega\Vert_r\Vert\omega_{r-1}\Vert_{\dot{B}^{\frac 1 2}_{r',1}},\\
		&\Vert R^{\rm h}\tilde{T}^{\rm v}(|\nabla_{\rm h}|^{-1}\partial\omega,\omega_{r-1})\Vert_{(\dot{B}^{-\varepsilon}_{2,1})_{\rm v}(\dot{B}^{\varepsilon}_{2,\infty})_{\rm h}}\\
		\lesssim&
		\sum_{l}\sup_{k}2^{l(\frac 1 r-\frac 1 2-\varepsilon)}2^{k\varepsilon}\sum_{\substack{k'\geq k-3\\ |l'-l|\leq 4}}
		\Vert \tilde{\Delta}_{k'}^{\rm h}\Delta_{l'}^{\rm v}|\nabla_{\rm h}|^{-1}\partial\omega\Vert_{L^r_{\rm v}L^{(\frac 1 r-\frac 1 2)^{-1}}_{\rm h}}
		\Vert\Delta_{k'}^{\rm h}S_{l'-1}\omega_{r-1}\Vert_{L^\infty_{\rm v}L^{r'}_{\rm h}}\\
		\lesssim&
		\Vert\partial\omega\Vert_r
		\sum_{l}\sup_{k}\sum_{\substack{k'\geq k-3\\ |l'-l|\leq 4}}2^{(l-l')(\frac 1 r-\frac 1 2-\varepsilon)}2^{(k-k')\varepsilon}\cdot
		2^{l'(\frac 1 r-\frac 1 2-\varepsilon)}2^{k'\varepsilon}\Vert\Delta_{k'}^{\rm h}S_{l'-1}\omega_{r-1}\Vert_{L^\infty_{\rm v}L^{r'}_{\rm h}}\\
		\lesssim&
		\Vert\partial\omega\Vert_r
		\Vert\omega_{r-1}\Vert_{(\dot{B}^{\frac 1 2-\varepsilon}_{r',1})_{\rm v}(\dot{B}^{\varepsilon}_{r',\infty})_{\rm h}}
		\lesssim
		\Vert\partial\omega\Vert_r\Vert\omega_{r-1}\Vert_{\dot{B}^{\frac 1 2}_{r',1}}.
	\end{aligned}
	$$
	Therefore combining above inequalities and Lemma \ref{Sobolev type}, we also obtain that for $p=4$
	$$
	\left|\int\partial_hv^3|\nabla_{\rm h}|^{-1}\partial_3\omega\cdot\omega_{r-1}{\rm d}x\right|
	\lesssim\Vert v^3\Vert_{\dot{B}^{1}_{2,\infty}}\Vert\nabla\omega_{\frac r 2}\Vert_2^{\frac 3 2}\Vert\omega_{\frac r 2}\Vert_2^{\frac 1 2}.
	$$
\end{proof}
Subsuming the estimate \eqref{I1}, \eqref{I21 p<4} and \eqref{I22 p<4} into \eqref{curl L^r}, we obtain the same estimate \eqref{w estimate} is valid for $2<p\leq4$.


\subsection{Estimate of $v^3$}
\quad Recall the equation of $v^3$ \eqref{v3}
$$
\partial_t\partial_k v^3+v\cdot\nabla\partial_k v^3-\Delta\partial_k v^3=-\partial_k v\cdot\nabla v^3+\partial_k\partial_3\Delta^{-1}\left(\sum_{l,m=1}^3\partial_m v^l\partial_l v^m\right).
$$
Taking the $L^2$ inner product of the $\partial_3v^3$ equation with $|\nabla_{\rm h}|^{-2\delta}\partial_kv^3$, we have
\begin{equation}\label{v3 L-2 norm}
	\begin{aligned}
		&\frac {\rm d}{{\rm d}t}\Vert|\nabla_{\rm h}|^{-\delta}\partial_kv^3\Vert_2^2+\Vert|\nabla_{\rm h}|^{-\delta}\nabla\partial_kv^3\Vert_2^2\\
		=&-\int(\partial_kv\cdot\nabla)v^3\cdot|\nabla_{\rm h}|^{-2\delta}\partial_k v^3{\rm d}x
		-\int (v\cdot \nabla)\partial_k v^3\cdot|\nabla_{\rm h}|^{-2\delta}\partial_k v^3{\rm d}x\\
		&-\int \partial_3\partial_k P\cdot|\nabla_{\rm h}|^{-2\delta}\partial_k v^3{\rm d}x
		=:J_1+J_2+J_3
	\end{aligned}
\end{equation}
with $P=\Delta^{-1}(\sum_{l,m=1}^3\partial_l v^m \partial_m v^l)$.

\noindent$\bullet$ Estimate of $J_1$.\\
$J_1$ can be written as
$$
J_1=-\int\partial_k v^3 \partial_3v^3\cdot|\nabla_{\rm h}|^{-2\delta}\partial_k v^3{\rm d}x-\int\partial_k v^{\rm h} \nabla_{\rm h}v^3\cdot|\nabla_{\rm h}|^{-2\delta}\partial_k v^3{\rm d}x=:J_{11}+J_{12}.
$$
According to Lemma \ref{embed2} and Lemma \ref{anisotropic product}, for $\varepsilon>0$ small enough we have
\begin{equation}\label{J11}
	\begin{aligned}
		|J_{11}|&\lesssim
		\Vert\partial_3v^3\Vert_{(\dot{B}^{\varepsilon}_{2,\infty})_{\rm h}(\dot{B}^{-\frac 1 2+\frac 2 p-\varepsilon}_{2,\infty})_{\rm v}}
		\Vert\partial_kv^3\cdot|\nabla_{\rm h}|^{-2\delta}\partial_k v^3\Vert_{(\dot{B}^{-\varepsilon}_{2,1})_{\rm h}(\dot{B}^{\frac 1 2-\frac 2 p+\varepsilon}_{2,1})_{\rm v}}\\
		&\lesssim\Vert v^3\Vert_{\dot{B}^{\frac 1 2+\frac 2 p}_{2,\infty}}
		\Vert\partial v^3\Vert_{(\dot{B}^{1-\delta-2\varepsilon}_{2,2})_{\rm h}(\dot{B}^{2\varepsilon}_{2,2})_{\rm v}}
		\Vert|\nabla_{\rm h}|^{-2\delta}\partial v^3\Vert\Vert_{(\dot{B}^{\delta+\varepsilon}_{2,2})_{\rm h}(\dot{B}^{1-\frac 2 p-\varepsilon}_{2,2})_{\rm v}}\\
		&\lesssim\Vert v^3\Vert_{\dot{B}^{\frac 1 2+\frac 2 p}_{2,\infty}}
		\Vert|\nabla_{\rm h}|^{-\delta}\partial v^3\Vert_{(\dot{B}^{1-2\varepsilon}_{2,2})_{\rm h}(\dot{B}^{2\varepsilon}_{2,2})_{\rm v}}
		\Vert|\nabla_{\rm h}|^{-\delta}\partial v^3\Vert\Vert_{(\dot{B}^{\varepsilon}_{2,2})_{\rm h}(\dot{B}^{1-\frac 2 p-\varepsilon}_{2,2})_{\rm v}}\\
		&\lesssim\Vert v^3\Vert_{\dot{B}^{\frac 1 2+\frac 2 p}_{2,\infty}}
		\Vert|\nabla_{\rm h}|^{-\delta}\partial v^3\Vert_2^{\frac 2 p}
		\Vert|\nabla_{\rm h}|^{-\delta}\partial^2 v^3\Vert_2^{2-\frac 2 p}.
	\end{aligned}
\end{equation}
Due to \eqref{decomposition}, $J_{12}$ can de reduced to
$$
J_{12}^{(1)}:=\int\partial_{\rm h}v^3\cdot|\nabla_{\rm h}|^{-1}\partial^2 v^3\cdot|\nabla_{\rm h}|^{-2\delta}\partial v^3{\rm d}x
\ \ {\rm and}\ \
J_{12}^{(2)}:=\int\partial_{\rm h}v^3\cdot|\nabla_{\rm h}|^{-1}\partial\omega\cdot|\nabla_{\rm h}|^{-2\delta}\partial v^3{\rm d}x.
$$
Also by Lemma \ref{anisotropic product}, we can deduce that for $\varepsilon>0$ small enough
\begin{equation}\label{J12^1}
	\begin{aligned}
		|J_{12}^{(1)}|
		&\lesssim \Vert|\nabla_{\rm h}|^{-1}\partial^2 v^3\Vert_{\dot{H}^{1-\delta,0}}
		\Vert \partial_{\rm h}v^3\cdot|\nabla_{\rm h}|^{-2\delta}\partial v^3\Vert_{\dot{H}^{\delta-1,0}}\\
		&\lesssim \Vert|\nabla_{\rm h}|^{-\delta}\partial^2v^3\Vert_2
		\Vert\partial_{\rm h}v^3\Vert_{(\dot{B}^{\frac 2 p+\varepsilon-1}_{2,\infty})_{\rm h}(\dot{B}^{\frac 1 2-\varepsilon}_{2,\infty})_{\rm v}}
		\Vert|\nabla_{\rm h}|^{-2\delta}\partial v^3\Vert_{(\dot{B}^{1-\frac 2 p+\delta-\varepsilon}_{2,2})_{\rm h}(\dot{B}^{\varepsilon}_{2,2})_{\rm v}}\\
		&\lesssim \Vert v^3\Vert_{\dot{B}^{\frac 1 2+\frac 2 p}_{2,\infty}}
		\Vert|\nabla_{\rm h}|^{-\delta}\partial^2v^3\Vert_2
		\Vert|\nabla_{\rm h}|^{-\delta}\partial v^3\Vert_{\dot{H}^{1-\frac 2 p}}\\
		&\lesssim \Vert v^3\Vert_{\dot{B}^{\frac 1 2+\frac 2 p}_{2,\infty}}
		\Vert|\nabla_{\rm h}|^{-\delta}\partial^2v^3\Vert_2^{2-\frac 2 p}
		\Vert|\nabla_{\rm h}|^{-\delta}\partial v^3\Vert_2^{\frac 2 p}.
	\end{aligned}
\end{equation}
And according to Lemma \ref{embed5} and Lemma \ref{anisotropic product}, for all $2<p\leq4$ we can choose $r$ sufficiently close to $2$ such that
\begin{equation}\label{J12^2}
	\begin{aligned}
		|J_{12}^{(2)}|&\lesssim
		\Vert |\nabla_{\rm h}|^{-1}\partial\omega\Vert_{(\dot{B}^{1}_{r,2})_{\rm h}(\dot{B}^{0}_{r,2})_{\rm v}}
		\Vert \partial_{\rm h} v^3|\nabla_{\rm h}|^{-2\delta}\partial v^3\Vert_{(\dot{B}^{-1}_{r',2})_{\rm h}(\dot{B}^{0}_{r',2})_{\rm v}}\\
		&\lesssim
		\Vert\partial\omega\Vert_{(\dot{B}^{0}_{r,2})_{\rm h}(\dot{B}^{0}_{r,2})_{\rm v}}
		\Vert \partial_{\rm h} v^3\Vert_{(\dot{B}^{-\delta}_{2,\infty})_{\rm h}(\dot{B}^{\frac 2 p-\frac 1 2+\delta}_{2,\infty})_{\rm v}}
		\Vert |\nabla_{\rm h}|^{-2\delta}\partial v^3\Vert_{(\dot{B}^{\delta+\frac 2 r-1}_{2,2})_{\rm h}(\dot{B}^{\frac 1 r+\frac 1 2-\frac 2 p-\delta}_{2,2})_{\rm v}}\\
		&\lesssim
		\Vert\partial\omega\Vert_r
		\Vert v^3\Vert_{(\dot{B}^{1-\delta}_{2,\infty})_{\rm h}(\dot{B}^{\frac 2 p-\frac 1 2+\delta}_{2,\infty})_{\rm v}}
		\Vert |\nabla_{\rm h}|^{-\delta}\partial v^3\Vert_{(\dot{B}^{\frac 2 r-1}_{2,2})_{\rm h}(\dot{B}^{\frac 1 r+\frac 1 2-\frac 2 p-\delta}_{2,2})_{\rm v}}\\
		&\lesssim
		\Vert\partial\omega\Vert_r
		\Vert v^3\Vert_{\dot{B}^{\frac 1 2+\frac 2 p}_{2,\infty}}
		\Vert|\nabla_{\rm h}|^{-\delta}\partial^2v^3\Vert_2^{1-\frac 2 p}
		\Vert|\nabla_{\rm h}|^{-\delta}\partial v^3\Vert_2^{\frac 2 p}.
	\end{aligned}
\end{equation}
According to \eqref{J11}, \eqref{J12^1} and \eqref{J12^2}, we get that
\begin{equation}\label{J1}
	|J_1|\lesssim\Vert v^3\Vert_{\dot{B}^{\frac 1 2+\frac 2 p}_{2,\infty}}
	\bigg(
	\Vert|\nabla_{\rm h}|^{-\delta}\partial^2v^3\Vert_2^{2-\frac 2 p}
	\Vert|\nabla_{\rm h}|^{-\delta}\partial v^3\Vert_2^{\frac 2 p}
	+\Vert|\nabla_{\rm h}|^{-\delta}\partial^2v^3\Vert_2^{1-\frac 2 p}
	\Vert|\nabla_{\rm h}|^{-\delta}\partial v^3\Vert_2^{\frac 2 p} \Vert\partial\omega\Vert_r
	\bigg).
\end{equation}

\noindent$\bullet$ Estimate of $J_2$.

Integrating by parts and combining the decomposition \eqref{decomposition}, $J_2$ can be divided into three parts:
$$J_{21}:=\int v^3\cdot\partial v^3|\nabla_h|^{-2\delta}\partial^2 v^3{\rm d}x,\
J_{22}:=\int |\nabla_{\rm h}|^{-1}\omega\cdot\partial v^3|\nabla_h|^{1-2\delta}\partial v^3{\rm d}x\ \ {\rm and}$$
$$J_{23}:=\int|\nabla_{\rm h}|^{-1}\partial v^3\cdot\partial v^3|\nabla_h|^{1-2\delta}\partial v^3{\rm d}x.$$
Similarly, by Lemma \ref{anisotropic product},
$$
\begin{aligned}
	|J_{21}|&\lesssim\Vert|\nabla_{\rm h}|^{-2\delta}\partial^2 v^3\Vert_{\dot{H}^{\delta,0}}\Vert v^3\cdot\partial v^3\Vert_{\dot{H}^{-\delta,0}}
	\lesssim\Vert|\nabla_{\rm h}|^{-\delta}\partial^2 v^3\Vert_2
	\Vert v^3\Vert_{(\dot{B}^{\frac 2 p+\varepsilon}_{2,\infty})_{\rm h}(\dot{B}^{\frac 1 2\varepsilon}_{2,\infty})_{\rm v}}
	\Vert\partial v^3\Vert_{\dot{H}^{1-\frac 2 p-\varepsilon-\delta,\varepsilon}}\\
	&\lesssim\Vert v^3\Vert_{\dot{B}^{\frac 1 2+\frac 2 p}_{2,\infty}}
	\lesssim\Vert|\nabla_{\rm h}|^{-\delta}\partial^2 v^3\Vert_2^{2-\frac 2 p}
	\lesssim\Vert|\nabla_{\rm h}|^{-\delta}\partial v^3\Vert_2^{\frac 2 p}.
\end{aligned}
$$
and for $s_1,s_2>0$ and $s_1+s_2\leq1$, according to Corollary 5.2 in \cite{Dong2019}, for $r<2$,
$$\Vert a\Vert_{(\dot{B}^{s_1}_{r,2})_{\rm h}(\dot{B}^{s_2}_{r,2})_{\rm v}}\lesssim \Vert a\Vert_{\dot{B}^{s_1+s_2}_{r,2}}\lesssim \Vert |\nabla|^{s_1+s_2}a\Vert_{\dot{B}^{0}_{r,2}}\lesssim\Vert |\nabla|^{s_1+s_2}a\Vert_r\lesssim\Vert a\Vert_r^{1-s_1-s_2}\Vert\nabla a\Vert_r^{s_1+s_2}.$$
Therefore
$$
\begin{aligned}
	|J_{22}|&\lesssim
	\Vert |\nabla_{\rm h}|^{-1}\omega\Vert_{(\dot{B}^{1+\varepsilon}_{r,2})_{\rm h}(\dot{B}^{1-\frac 2 p-\varepsilon}_{r,2})_{\rm v}}
	\Vert\partial v^3\cdot|\nabla_{\rm h}|^{1-2\delta}\partial v^3\Vert_{(\dot{B}^{-1-\varepsilon}_{r',2})_{\rm h}(\dot{B}^{-1+\frac 2 p+\varepsilon}_{r',2})_{\rm v}}\\
	&\lesssim
	\Vert \omega\Vert_{(\dot{B}^{\varepsilon}_{r,2})_{\rm h}(\dot{B}^{1-\frac 2 p-\varepsilon}_{r,2})_{\rm v}}
	\Vert \partial v^3\Vert_{(\dot{B}^{\frac 2 p-\frac 1 2-\varepsilon}_{2,\infty})_{\rm h}(\dot{B}^{\varepsilon}_{2,\infty})_{\rm v}}
	\Vert |\nabla_{\rm h}|^{1-2\delta}\partial v^3\Vert_{\dot{H}^{\frac 2 r-\frac 2 p-\frac 1 2,\frac 2 p-\frac 1 r-1}}\\
	&\lesssim \Vert v^3\Vert_{\dot{B}^{\frac 1 2+\frac 2 p}_{2,\infty}}
	\Vert\omega\Vert_r^{\frac 2 p}\Vert\nabla\omega\Vert_r^{1-\frac 2 p}
	\Vert|\nabla_{\rm h}|^{-\delta}\partial^2 v^3\Vert_2,\\
	|J_{23}|&\lesssim
	\Vert |\nabla_{\rm h}|^{-1}\partial v^3\Vert_{\dot{H}^{1-\delta,\frac 1 2-\varepsilon}}
	\Vert\partial v^3\cdot|\nabla_{\rm h}|^{1-2\delta}\partial v^3\Vert_{\dot{H}^{\delta-1,\varepsilon-\frac 1 2}}\\
	&\lesssim
	\Vert |\nabla_{\rm h}|^{-\delta}\partial v^3\Vert_2^{\frac 1 2+\varepsilon}
	\Vert |\nabla_{\rm h}|^{-\delta}\partial^2 v^3\Vert_2^{\frac 1 2-\varepsilon}
	\Vert\partial v^3\Vert_{(\dot{B}^{\frac 2 p-\frac 1 2-\varepsilon}_{2,\infty})_{\rm h}(\dot{B}^{\varepsilon}_{2,\infty})_{\rm v}}
	\Vert |\nabla_{\rm h}|^{1-2\delta}\partial v^3\Vert_{\dot{H}^{\delta+\frac 1 2+\varepsilon-\frac 2 p,0}}\\
	&\lesssim \Vert v^3\Vert_{\dot{B}^{\frac 1 2+\frac 2 p}_{2,\infty}}
	\Vert |\nabla_{\rm h}|^{-\delta}\partial v^3\Vert_2^{\frac 2 p}
	\Vert |\nabla_{\rm h}|^{-\delta}\partial^2 v^3\Vert_2^{2-\frac 2 p}.
\end{aligned}
$$

Consequently, we have
\begin{equation}\label{J2}
	|J_2|\lesssim\Vert v^3\Vert_{\dot{B}^{\frac 1 2+\frac 2 p}_{2,\infty}}\Vert|\nabla_{\rm h}|^{-\delta}\partial^2v^3\Vert_2
	\bigg(
	\Vert|\nabla_{\rm h}|^{-\delta}\partial^2v^3\Vert_2^{1-\frac 2 p}
	\Vert|\nabla_{\rm h}|^{-\delta}\partial v^3\Vert_2^{\frac 2 p}
	+\Vert\omega\Vert_r^{\frac 2 p}\Vert\nabla\omega\Vert_r^{1-\frac 2 p}
	\bigg).
\end{equation}

\noindent$\bullet$ Estimate of $J_3$.

Since $\partial_i\partial_j\Delta^{-1}$ and $\partial_l\partial_m\Delta_{\rm h}^{-1}$ are bounded Fourier multipliers for all $i,j=1,2,3$ and $l,m=1,2$. Therefore, according to \eqref{decomposition}, the estimate of $J_3$ can be divided into
$$
J_{31}:=\int \partial_{\rm h} v^3\partial_3 v^{\rm h} |\nabla_h|^{-2\delta}\partial v^3{\rm d}x\ \ {\rm and}\ \ J_{32}:=\int (\partial_3 v^3+\omega)\cdot(\partial_3 v^3+\omega)\cdot |\nabla_h|^{-2\delta}\partial v^3{\rm d}x.
$$
The estimate of $J_{31}$ is similar to $J_{32}$. Therefore we only consider $J_{32}$. By dual argument, we see that
$$
\begin{aligned}
	|J_{32}|&\lesssim\Vert |\nabla_{\rm h}|^{-2\delta}\partial v^3\Vert_{(\dot{B}^{\frac 2 p-\frac 1 2+2\delta-\varepsilon}_{2,\infty})_{\rm h}(\dot{B}^{\varepsilon}_{2,\infty})_{\rm h}}
	\Vert (\partial_3 v^3+\omega)^2\Vert_{(\dot{B}^{-\frac 2 p+\frac 1 2-2\delta+\varepsilon}_{2,1})_{\rm h}(\dot{B}^{-\varepsilon}_{2,1})_{\rm h}}\\
	&\lesssim
	\Vert v^3\Vert_{\dot{B}^{\frac 1 2+\frac 2 p}_{2,\infty}}
	\left(
	\Vert \omega\Vert_{\dot{H}^{\frac 3 4-\frac 1 p-\delta+\frac \varepsilon 2,\frac 1 4-\frac \varepsilon 2}}^2
	+\Vert \partial v^3\Vert_{\dot{H}^{\frac 3 4-\frac 1 p-\delta+\frac \varepsilon 2,\frac 1 4-\frac \varepsilon 2}}^2
	\right)\\
	&\lesssim\Vert v^3\Vert_{\dot{B}^{\frac 1 2+\frac 2 p}_{2,\infty}}
	\left(
	\Vert \omega\Vert_{(\dot{B}^{\frac 3 4-\frac 1 p-\frac{2\delta} 3+\frac \varepsilon 2}_{r,2})_{\rm h}(\dot{B}^{\frac 1 4+\frac{\delta} 3-\frac \varepsilon 2}_{r,2})_{\rm v}}^2
	+\Vert\nabla_{\rm h}|^{-\theta} \partial v^3\Vert_{\dot{H}^{1-\frac 1 p}}^2
	\right)\\
	&\lesssim\Vert v^3\Vert_{\dot{B}^{\frac 1 2+\frac 2 p}_{2,\infty}}
	\left(
	\Vert|\nabla_{\rm h}|^{-\delta}\partial^2v^3\Vert_2^{2-\frac 2 p}
	\Vert|\nabla_{\rm h}|^{-\delta}\partial v^3\Vert_2^{\frac 2 p}
	+\Vert\omega\Vert_r^{\frac 2 p}\Vert\nabla\omega\Vert_r^{2-\frac 2 p}
	\right).
\end{aligned}
$$
Thus, we conclude that
\begin{equation}\label{J3}
	\begin{aligned}
		|J_3|\lesssim\Vert v^3\Vert_{\dot{B}^{\frac 1 2+\frac 2 p}_{2,\infty}}
		\bigg(
		\Vert|\nabla_{\rm h}|^{-\delta}\partial^2v^3\Vert_2^{2-\frac 2 p}&
		\Vert|\nabla_{\rm h}|^{-\delta}\partial v^3\Vert_2^{\frac 2 p}
		+\Vert\omega\Vert_r^{\frac 2 p}\Vert\nabla\omega\Vert_r^{2-\frac 2 p}\\
		&+\Vert|\nabla_{\rm h}|^{-\delta}\partial^2v^3\Vert_2^{1-\frac 2 p}
		\Vert|\nabla_{\rm h}|^{-\delta}\partial v^3\Vert_2^{\frac 2 p} \Vert\partial\omega\Vert_r
		\bigg).
	\end{aligned}
\end{equation}
Subsuming \eqref{J1}, \eqref{J2} and \eqref{J3} into \eqref{v3 L-2 norm}, we finally conclude that
\begin{equation}\label{v^3 estimate p<4}
	\begin{aligned}
		&\frac{\rm d}{{\rm d}t}\Vert|\nabla_{\rm h}|^{-\delta}\partial v^3\Vert_2^2
		+\Vert|\nabla_{\rm h}|^{-\delta}\partial^2v^3\Vert_2^2\\
		\lesssim&\Vert v^3\Vert_{\dot{B}^{\frac 1 2+\frac 2 p}_{2,\infty}}
		\bigg(
		\Vert|\nabla_{\rm h}|^{-\delta}\partial^2v^3\Vert_2^{1-\frac 2 p}
		\Vert|\nabla_{\rm h}|^{-\delta}\partial v^3\Vert_2^{\frac 2 p} \Vert\partial\omega\Vert_r
		+\Vert\omega\Vert_r^{\frac 2 p}\Vert\nabla\omega\Vert_r^{1-\frac 2 p}
		\Vert|\nabla_{\rm h}|^{-\delta}\partial^2 v^3\Vert_2\\
		&\quad\quad\quad\quad\quad\quad\ +\Vert|\nabla_{\rm h}|^{-\delta}\partial^2v^3\Vert_2^{2-\frac 2 p}
		\Vert|\nabla_{\rm h}|^{-\delta}\partial v^3\Vert_2^{\frac 2 p}
		+\Vert\omega\Vert_r^{\frac 2 p}\Vert\nabla\omega\Vert_r^{2-\frac 2 p}\bigg).
	\end{aligned}
\end{equation}
Combining \eqref{w estimate} and \eqref{v^3 estimate p<4}, we thus prove Theorem \ref{main} in the case $2<p\leq4$.

\begin{rema}

	
	For $p=2$, since we only have the Sobolev embedding
	$$\Vert a\Vert_{L^\varepsilon(\mathbb{R})}\lesssim\Vert a\Vert_{\dot{H}^{\frac 1 2-\varepsilon}(\mathbb{R})},\  \Vert a\Vert_{L^\varepsilon(\mathbb{R}^2)}\lesssim\Vert a\Vert_{\dot{H}^{1-2\varepsilon}(\mathbb{R}^2)},$$
	The paraproduct decomposition index of the norm $\Vert a\cdot b\Vert_{(\dot{B}^1_{2,1})_{\rm h}(\dot{B}^{\frac 1 2}_{2,1})_{\rm v}}$ is a critical case, and the dual norm $\Vert v^3\Vert_{(\dot{B}^1_{2,\infty})_{\rm h}(\dot{B}^{\frac 1 2}_{2,\infty})_{\rm v}}$ cannot obtain by above Sobolev's embedding. The case $p=2$ is still open.
\end{rema}

\appendix
\section{Appendix}
\begin{prop}\cite{BCD}\label{isotropic Besov}
	
	There exists a couple of smooth function $(\chi,\varphi)$ valued in $[0,1]$, such that $\kappa$ is supported in the ball $B\triangleq\left\{\xi\in\mathbb{R}^d: \frac 3 4\leq|\xi|\leq\frac 8 3\right\}$. Moreover
	$$
	\forall\xi\in\mathbb{R}^d, \chi(\xi)+\sum_{j\geq0}\varphi(2^{-j}\xi)=1,$$
	\noindent and
	$$
	{\rm supp}\ \varphi(2^{-j}\cdot)\cap {\rm supp}\ \varphi(2^{-j^{\prime}\cdot})=\emptyset,\ if\  |j-j^{\prime}|\geq2,$$
	$$
	{\rm supp}\ \chi(\cdot)\cap {\rm supp}\ \varphi(2^{-j^{\prime}\cdot})=\emptyset,\ if\ j\geq1.$$
\end{prop}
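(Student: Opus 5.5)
The plan is the standard telescoping construction of a dyadic partition of unity, as in \cite{BCD}. In the statement, $\varphi$ is the function meant to be supported in the annulus $\mathcal{C}\triangleq\{\xi\in\mathbb{R}^d:\frac 3 4\leq|\xi|\leq\frac 8 3\}$, and $\chi$ is supported in the ball $\{|\xi|\leq\frac 4 3\}$.

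\textbf{Construction of $\chi$ and $\varphi$.} I would first fix a smooth radial function $\chi$ with values in $[0,1]$ such that:
$\chi\equiv1$ on $\{|\xi|\leq\frac 3 4\}$; $\chi$ is supported in $\{|\xi|\leq\frac 4 3\}$; and $\chi$ is nonincreasing in $|\xi|$.
Such a function is obtained by composing a smooth monotone cutoff in one variable with $|\xi|$; the composition is smooth because $\chi$ is constant near the origin. I then set
$$\varphi(\xi)\triangleq\chi(\xi/2)-\chi(\xi).$$
Monotonicity in $|\xi|$ gives $\varphi\geq0$, and $\varphi\leq\chi(\xi/2)\leq1$, so $\varphi$ takes values in $[0,1]$.

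\textbf{Support of $\varphi$.} If $|\xi|<\frac 3 4$, then both $\chi(\xi)$ and $\chi(\xi/2)$ equal $1$, so $\varphi(\xi)=0$. If $|\xi|>\frac 8 3$, then both $\chi(\xi)$ and $\chi(\xi/2)$ vanish, so again $\varphi(\xi)=0$. Hence $\operatorname{supp}\varphi\subset\mathcal{C}$.

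\textbf{Partition of unity.} For $J\geq0$ the sum telescopes:
$$\chi(\xi)+\sum_{j=0}^{J}\varphi(2^{-j}\xi)=\chi(2^{-J-1}\xi).$$
Fix $\xi$. As soon as $2^{-J-1}|\xi|\leq\frac 3 4$, the right-hand side equals $1$. Therefore the series equals $1$ for every $\xi$, and the sum is locally finite.

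\textbf{Almost orthogonality.} The support of $\varphi(2^{-j}\cdot)$ lies in $\{\frac 3 4 2^j\leq|\xi|\leq\frac 8 3 2^j\}$. Suppose $j'\geq j+2$. The inner radius of the $j'$-th annulus is at least $\frac 3 4 2^{j+2}=3\cdot2^j$, which exceeds the outer radius $\frac 8 3 2^j$ of the $j$-th annulus. So the two supports are disjoint.

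Similarly, for $j\geq1$ the inner radius of $\operatorname{supp}\varphi(2^{-j}\cdot)$ is at least $\frac 3 2$. This exceeds $\frac 4 3$, the radius of the ball containing $\operatorname{supp}\chi$, so those supports are disjoint as well.

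\textbf{Main obstacle.} The only delicate point is the choice of radii. They must satisfy two strict inequalities, $4\cdot\frac 3 4>\frac 8 3$ and $2\cdot\frac 3 4>\frac 4 3$, which is why the specific constants $\frac 3 4,\frac 4 3$ are used. Beyond that, the only requirement is to make $\chi$ radial and monotone, so that $\varphi$ is nonnegative and bounded by $1$.
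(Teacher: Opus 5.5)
Your proof is correct: with $\chi$ radial and radially nonincreasing, $\varphi=\chi(\cdot/2)-\chi$ takes values in $[0,1]$ and is supported in $\{\frac34\le|\xi|\le\frac83\}$. The identity follows from the telescoping formula $\chi(\xi)+\sum_{j=0}^{J}\varphi(2^{-j}\xi)=\chi(2^{-J-1}\xi)$, and the two disjointness claims reduce to $3>\frac83$ and $\frac32>\frac43$.

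The paper gives no proof and only cites \cite{BCD}, and the argument there is not a telescoping one. It proceeds by normalization:
\begin{itemize}
\item take a radial bump $\theta$ with values in $[0,1]$, supported in the annulus and equal to $1$ on a slightly smaller annulus whose dyadic dilates cover $\mathbb{R}^d\setminus\{0\}$;
\item divide by the positive, smooth, dilation-invariant sum $S(\xi)=\sum_{j\in\mathbb{Z}}\theta(2^{-j}\xi)$ and set $\varphi=\theta/S$;
\item only afterwards define $\chi=1-\sum_{j\ge0}\varphi(2^{-j}\cdot)$, and read off its support from $\chi(\xi)=\sum_{j\le-1}\varphi(2^{-j}\xi)$ for $\xi\neq0$.
\end{itemize}

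That route makes $0\le\varphi\le1$ automatic and produces the homogeneous identity $\sum_{j\in\mathbb{Z}}\varphi(2^{-j}\xi)=1$ ($\xi\neq0$) first. Yours is shorter and fully explicit, but it needs $\chi(\xi/2)\ge\chi(\xi)$, which you secure by monotonicity. The homogeneous identity is what the paper's blocks $\Delta_j$, $j\in\mathbb{Z}$, actually rely on. It also drops out of your telescoping, since $\sum_{j=-M}^{J}\varphi(2^{-j}\xi)=\chi(2^{-J-1}\xi)-\chi(2^{M}\xi)\to1$ for $\xi\neq0$.
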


~~Then for all $u\in\mathcal{S}^\prime$, we can define the anisotropic homogeneous dyadic blocks as follows. Let
$$
\Delta_k^{\rm h} a=\mathcal{F}^{-1}\left(\varphi(2^{-k}|\xi_h|\hat{a})\right),\quad \Delta_l^{\rm v} a=\mathcal{F}^{-1}\left(\varphi(2^{-l}|\xi_3|\hat{a})\right),$$
$$
S_k^{\rm h} a=\mathcal{F}^{-1}\left(\chi(2^{-k}|\xi_h|\hat{a})\right),\quad S_l^{\rm v} a=\mathcal{F}^{-1}\left(\chi(2^{-l}|\xi_3|\hat{a})\right),$$
$$
\Delta_j a=\mathcal{F}^{-1}\left(\varphi(2^{-j}|\xi|\hat{a})\right),\quad S_j a=\mathcal{F}^{-1}\left(\chi(2^{-j}|\xi|\hat{a})\right).$$

\begin{defi}\cite{Zhang2017}
	Let $(p,r)\in[1,\infty]^2$ and $s\in\mathbb{R}$. Let us consider $u\in S_h'(\mathbb{R}^3)$, which means that $u\in S'(\mathbb{R}^3)$ and satisfied $\lim_{j\rightarrow-\infty}\Vert S_ju\Vert_\infty=0$. We set
	$$\Vert u\Vert_{\dot{B}^s_{p,r}}\triangleq\left\Vert\left(2^{js}\Vert\Delta_ju\Vert_p\right)_j\right\Vert_{l^r(\mathbb{Z})}.$$Then for $s<\frac 3 p$(or $s=\frac 3 p$ if $r=1$), we define $\dot{B}^s_{p,r}(\mathbb{R}^3)\triangleq\left\{u\in S_h'(\mathbb{R}^3)|\Vert u\Vert_{\dot{B}^s_{p,r}}<\infty\right\}$.

Moreover, if $k$ is a positive integer and if $\frac 3 p+k\leq s<\frac 3 p+k+1)$(or $s=\frac 3 p+k+1$ if $r=1$), then we define $\dot{B}^s_{p,r}$ as the subset of the distributions $u\in S_h'(\mathbb{R}^3)$ such that $\partial^\beta u\in\dot{B}^{s-k}_{p,r}$ whenever $|\beta|=k$.
\end{defi}

Similar to Definition \ref{isotropic Besov}, we also define the homogeneous anisotropic Besov space.

\begin{defi}\cite{Zhang2017}\label{anisotropic Besov}
	Let us define the space $\left(\dot{B}^{s_1}_{p,q_1}\right)_{\rm h}\left(\dot{B}^{s_2}_{p,q_2}\right)_{\rm v}$ as the spce of distribution in $S_h'$ such that
	$$\Vert u\Vert_{\left(\dot{B}^{s_1}_{p,q_1}\right)_{\rm h}\left(\dot{B}^{s_2}_{p,q_2}\right)_{\rm v}}\triangleq\left(\sum_{k\in{\mathbb{Z}}}2^{q_1ks_1}\left(\sum_{l\in\mathbb{Z}}2^{q_2ls_2}\Vert\Delta_k^{\rm h}\Delta_l^{\rm v}u\Vert_p^{q_2}\right)^{ {q_1}/{q_2}}\right)^{1/{q_1}}$$
	is finite.
\end{defi}

We remark that when $p=q_1=q_2=2$, the anisotropic Besov space $\left(\dot{B}^{s_1}_{p,q_1}\right)_{\rm h}\left(\dot{B}^{s_2}_{p,q_2}\right)_{\rm v}$ coincides with the classical homogeneous anistropic Sobolev space $\dot{H}^{s_1,s_2}$. And we have the following anisotropic Bernstein type lemma:

\begin{lemm}\cite{Zhang2007,Zhang2017,Paicu2005}\label{Bernstein}
	Let $\mathcal{B}_{\rm h}$(resp. $\mathcal{B}_{\rm v}$) is a ball of $\mathbb{R}^2_{\rm h}$(resp. $\mathbb{R}_{\rm v}$), and $\mathcal{C}_{\rm h}$(resp. $\mathcal{C}_{\rm v}$) a ring of $\mathbb{R}^2_{\rm h}$(resp. $\mathbb{R}_{\rm v}$). Let $1\leq p_2\leq p_1\leq\infty$ and $1\leq q_2\leq q_1\leq\infty$. Then it holds that:
	
	\noindent$\bullet$ If ${\rm supp}\ \hat{a}\subseteq 2^k\mathcal{B}_{\rm h}$, then
	$$\Vert\partial_{x_{\rm h}}^\alpha a\Vert_{L^{p_1}_{\rm h}L^{q_1}_{\rm v}}\lesssim 2^{k(|\alpha|+2(1/p_2-1/p_1))}\Vert a\Vert_{L^{p_2}_{\rm h}L^{q_2}_{\rm v}}.$$
	
	\noindent$\bullet$ If ${\rm supp}\ \hat{a}\subseteq 2^l\mathcal{B}_{\rm v}$, then
	$$\Vert\partial_{x_3}^\beta a\Vert_{L^{p_1}_{\rm h}L^{q_1}_{\rm v}}\lesssim 2^{l(\beta+(1/p_2-1/p_1))}\Vert a\Vert_{L^{p_2}_{\rm h}L^{q_2}_{\rm v}}.$$
	
	\noindent$\bullet$ If ${\rm supp}\ \hat{a}\subseteq 2^k\mathcal{C}_{\rm h}$, then
	$$\Vert a\Vert_{L^{p_1}_{\rm h}L^{q_1}_{\rm v}}\lesssim 2^{-kN}\sup_{|\alpha|=N}\Vert\partial_{x_{\rm h}}^\alpha a\Vert_{L^{p_1}_{\rm h}L^{q_1}_{\rm v}}.$$
	
	\noindent$\bullet$ If ${\rm supp}\ \hat{a}\subseteq 2^k\mathcal{C}_{\rm v}$, then
	$$\Vert a\Vert_{L^{p_1}_{\rm h}L^{q_1}_{\rm v}}\lesssim 2^{-lN}\Vert\partial_{x_3}^N a\Vert_{L^{p_1}_{\rm h}L^{q_1}_{\rm v}}.$$
\end{lemm}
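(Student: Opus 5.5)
The statement is the anisotropic Bernstein lemma (Lemma \ref{Bernstein}). I will reduce each of the four bullets to the one-variable (or two-variable) Bernstein inequality, applied slice by slice, together with Minkowski's inequality for mixed Lebesgue norms.

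\textbf{First bullet.} Take $\phi\in\mathcal{S}(\mathbb{R}^2)$ with $\hat\phi\equiv1$ on $\mathcal{B}_{\rm h}$ and set $\phi_k(x_{\rm h})=2^{2k}\phi(2^kx_{\rm h})$. Since ${\rm supp}\,\hat a\subseteq 2^k\mathcal{B}_{\rm h}$, we have $a=\phi_k\ast_{\rm h}a$, with convolution in $x_{\rm h}$ only. Hence $\partial^\alpha_{x_{\rm h}}a=(\partial^\alpha\phi_k)\ast_{\rm h}a$, and $\Vert\partial^\alpha\phi_k\Vert_{L^m_{\rm h}}\lesssim 2^{k(|\alpha|+2(1-1/m))}$.

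For fixed $x_{\rm h}$, the triangle inequality in $L^{q_1}_{\rm v}$ gives $\Vert\partial^\alpha_{x_{\rm h}}a(x_{\rm h},\cdot)\Vert_{L^{q_1}_{\rm v}}\le(|\partial^\alpha\phi_k|\ast_{\rm h}\Vert a\Vert_{L^{q_1}_{\rm v}})(x_{\rm h})$. Apply Young's inequality in $x_{\rm h}$ with $1+1/p_1=1/m+1/p_2$. This produces the factor $2^{k(|\alpha|+2(1/p_2-1/p_1))}$ times $\Vert a\Vert_{L^{p_2}_{\rm h}L^{q_1}_{\rm v}}$.

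The remaining step passes from $q_1$ to $q_2\le q_1$ in the vertical variable. With no vertical frequency localization, this cannot come from Bernstein. Two routes are available. The first is to read the lemma with $q_1=q_2$, the only way it is used for horizontally localized pieces. The second is to use the vertical localization present in every application, where the blocks are $\Delta^{\rm h}_k\Delta^{\rm v}_l$ or $S^{\rm h}_kS^{\rm v}_l$. In that case one also convolves in $x_3$ with a kernel $\psi_l$ satisfying $\Vert\psi_l\Vert_{L^n_{\rm v}}\lesssim 2^{l(1-1/n)}$, which gains $2^{l(1/q_2-1/q_1)}$.

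\textbf{Second bullet.} This is identical, exchanging the roles of the variables: convolve in $x_3$ with $\psi_l(x_3)=2^l\psi(2^lx_3)$. The issue is the order of the norms. The outer norm is $L^{p_1}_{\rm h}$ while the gain comes from the inner vertical integration, so apply 1D Young in $x_3$ for each fixed $x_{\rm h}$ first, and then take the $L^{p_1}_{\rm h}$ norm. That order gives exactly the exponent $2^{l(\beta+(1/q_2-1/q_1))}$.

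\textbf{Third and fourth bullets (reverse Bernstein).} Here $\hat a$ is supported in a ring $2^k\mathcal{C}_{\rm h}$. Pick $\tilde\varphi\in\mathcal{D}(\mathbb{R}^2\setminus\{0\})$ equal to $1$ on $\mathcal{C}_{\rm h}$ and write
\[
\hat a(\xi)=\sum_{|\alpha|=N}\frac{(-i\xi_{\rm h})^\alpha}{|\xi_{\rm h}|^{2N}}\,c_\alpha\,\tilde\varphi(2^{-k}\xi_{\rm h})\,(i\xi_{\rm h})^\alpha\hat a(\xi),
\]
using $|\xi_{\rm h}|^{2N}=\sum_{|\alpha|=N}c_\alpha\xi_{\rm h}^{2\alpha}$. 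Each multiplier $\xi_{\rm h}^\alpha|\xi_{\rm h}|^{-2N}\tilde\varphi(2^{-k}\xi_{\rm h})$ is $2^{-kN}$ times the Fourier transform of an $L^1_{\rm h}$ kernel whose norm is independent of $k$. Convolving in $x_{\rm h}$ and using Minkowski's inequality in the mixed norm, as in the first bullet but with $m=1$, gives $\Vert a\Vert_{L^{p_1}_{\rm h}L^{q_1}_{\rm v}}\lesssim 2^{-kN}\sup_{|\alpha|=N}\Vert\partial^\alpha_{x_{\rm h}}a\Vert_{L^{p_1}_{\rm h}L^{q_1}_{\rm v}}$. The vertical case is the same with the multiplier $(i\xi_3)^{-N}\tilde\varphi(2^{-l}\xi_3)$, and in the fourth bullet the ring is $2^l\mathcal{C}_{\rm v}$.

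The main obstacle is bookkeeping rather than analysis. One must always apply Young's inequality in the variable being localized, with the other variable's norm already taken pointwise. When the outer norm is $L^{p}_{\rm h}$ and the inner is $L^q_{\rm v}$, horizontal convolution needs Minkowski, valid since the kernel is in $L^1$-type spaces. One must also be explicit that changing the integrability exponent in a given variable requires frequency localization in that same variable.
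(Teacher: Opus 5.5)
Your proposal is correct and is the standard argument behind the cited Chemin--Zhang/Paicu lemma; the paper itself gives no proof, only the citation. That argument reproduces $a$ with a rescaled kernel in the localized variable only, applies Young there and Minkowski in the other variable, and uses the usual multiplier identity for the reverse inequalities. You also rightly caught that the printed first bullet must keep $L^{q_1}_{\rm v}$ on the right and that the fourth bullet means $2^l\mathcal{C}_{\rm v}$.

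You should say explicitly that the printed second bullet is misstated too: with the factor $2^{l(\beta+1/p_2-1/p_1)}$ and $\Vert a\Vert_{L^{p_2}_{\rm h}L^{q_2}_{\rm v}}$ on the right, it fails for suitable $f(x_{\rm h})g(2^lx_3)$. What your slice-wise Young argument actually proves is the intended $\Vert\partial_{x_3}^\beta a\Vert_{L^{p_1}_{\rm h}L^{q_1}_{\rm v}}\lesssim 2^{l(\beta+1/q_2-1/q_1)}\Vert a\Vert_{L^{p_1}_{\rm h}L^{q_2}_{\rm v}}$, so present it as that correction rather than as "exactly" the stated exponent.
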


\begin{prop}\label{isotropic Bony}\cite{BCD}
	Let $a,\ b\in S'(\mathbb{R}^3)$, we have
	$$
	ab=T(a,b)+R(a,b)+\tilde{T}(a,b),
	$$
	here we have
	$$
	T(a,b)=\sum_{j\in\mathbb{Z}}S_{j-1}a\Delta_jb,\ R(a,b)=\sum_{j\in\mathbb{Z}}\Delta_ja\tilde{\Delta}_jb, \ {\rm and}\ \tilde{T}(a,b)=T(b,a)
	$$
	with $\tilde{\Delta}_jb=\sum_{l=j-1}^{j+1}\Delta_l b$.
\end{prop}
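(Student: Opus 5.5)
The plan is to prove the identity by expanding both factors in their homogeneous Littlewood--Paley series and sorting the resulting double sum according to the relative size of the two frequencies. This is the standard argument from \cite{BCD}. Fix the convention (implicit in Proposition \ref{isotropic Bony}) that for $u\in S_h'$ one has $u=\sum_{j\in\mathbb{Z}}\Delta_j u$ in $S'$, and that $S_{j-1}u=\sum_{j'\leq j-2}\Delta_{j'}u$. This uses the almost orthogonality of the dyadic blocks in Proposition \ref{isotropic Besov}, namely $\Delta_j\Delta_{j'}=0$ for $|j-j'|\geq 2$, together with $\lim_{j\to-\infty}\|S_ju\|_\infty=0$, which is exactly what membership in $S_h'$ guarantees.

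\textbf{Step 1.} Write $ab=\sum_{(j,j')\in\mathbb{Z}^2}\Delta_{j}a\,\Delta_{j'}b$ and split $\mathbb{Z}^2$ into three disjoint regions: $\{j\leq j'-2\}$, $\{|j-j'|\leq1\}$ and $\{j'\leq j-2\}$.

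\textbf{Step 2.} Sum each region in the appropriate order.
\begin{itemize}
\item In the first region, summing over $j\leq j'-2$ for fixed $j'$ gives $S_{j'-1}a\,\Delta_{j'}b$, so the region contributes $T(a,b)$.
\item In the diagonal region, summing over $j'\in\{j-1,j,j+1\}$ for fixed $j$ gives $\Delta_j a\,\tilde\Delta_j b$, so the region contributes $R(a,b)$.
\item The third region is the mirror image of the first and contributes $T(b,a)=\tilde T(a,b)$.
\end{itemize}

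\textbf{Step 3 (main obstacle).} The delicate point is justifying the rearrangement of the double series, since the product of two arbitrary tempered distributions is not defined. The statement is to be read for $a,b$ for which the three series converge in $S'$; in the paper these are always the Besov-type spaces where the product is controlled. The key tool is Fourier support:
\begin{itemize}
\item $S_{j-1}a\,\Delta_j b$ has spectrum in an annulus $2^j\tilde{\mathcal C}$;
\item $\Delta_j a\,\tilde\Delta_j b$ has spectrum in a ball $2^j\tilde B$.
\end{itemize}
Hence, for any test function $\phi$ with $\hat\phi$ compactly supported away from the origin, only finitely many paraproduct terms pair nontrivially with $\phi$. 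For the remainder, the tail is controlled by the $S_h'$ decay at low frequency together with the usual high-frequency summability. I would first verify the identity on partial sums $S_Na\,S_Nb$, where all sums are finite and the regrouping above is purely algebraic. Then I would pass to the limit $N\to\infty$, and $N\to-\infty$ for the low frequencies, using the support properties to get convergence of each of $T$, $R$ and $\tilde T$ separately whenever they are defined.
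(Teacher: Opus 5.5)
Your proposal is correct and is exactly the standard derivation from \cite{BCD}; the paper relies on that citation and gives no proof of its own. Splitting $\sum_{j,j'}\Delta_j a\,\Delta_{j'}b$ over $\{j\le j'-2\}$, $\{|j-j'|\le 1\}$, $\{j'\le j-2\}$ with $S_{j-1}=\sum_{j'\le j-2}\Delta_{j'}$ (which matches the paper's normalization of $\chi,\varphi$), and reading the identity as holding whenever the three series make sense, is precisely how it is obtained there.

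One small slip: in the homogeneous setting $S_Na\,S_Nb$ still contains infinitely many low-frequency blocks, so the genuinely finite truncation is $(S_N-S_M)a\,(S_N-S_M)b$. Its regrouping leaves edge terms such as $\Delta_{N-1}a\,\Delta_N b$ and $S_Ma\,(S_N-S_{M+2})b$, which must be shown to vanish in the limit $M\to-\infty$, $N\to\infty$, as your Step 3 essentially anticipates.
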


\smallskip
	\noindent\textbf{Acknowledgments}
This work was partially supported by the National Key R\&D Program of China(No. 2021YFA1002100) and the National Natural Science Foundation of China (No.12171493 and No. 12471233).
	
	\smallskip
	\noindent\textbf{Conflict of interest}
	
	The authors have no conflicts to disclose.
	
	\smallskip
	\noindent\textbf{Data Availability}
	
	The data that support the findings of this study are available from the corresponding author upon reasonable request.

	\phantomsection
	\addcontentsline{toc}{section}{\refname}
	\bibliographystyle{abbrv} 
	\bibliography{NSref}

@book {BCD,
    AUTHOR = {Bahouri, Hajer and Chemin, Jean-Yves and Danchin, Rapha\"{e}l},
     TITLE = {Fourier analysis and nonlinear partial differential equations},
    SERIES = {Grundlehren der mathematischen Wissenschaften [Fundamental
              Principles of Mathematical Sciences]},
    VOLUME = {343},
 PUBLISHER = {Springer, Heidelberg},
      YEAR = {2011},
     PAGES = {xvi+523},
      ISBN = {978-3-642-16829-1},
   MRCLASS = {35-02 (35L72 35Q30 42-02 42B37 76B03 76D03 76N10)},
  MRNUMBER = {2768550},
MRREVIEWER = {Peter\ R.\ Massopust},
       DOI = {10.1007/978-3-642-16830-7},
       URL = {https://doi.org/10.1007/978-3-642-16830-7},
}

@article {Wolf2021,
    AUTHOR = {Chae, D. and Wolf, J.},
     TITLE = {On the {S}errin-type condition on one velocity component for
              the {N}avier-{S}tokes equations},
   JOURNAL = {Arch. Ration. Mech. Anal.},
  FJOURNAL = {Archive for Rational Mechanics and Analysis},
    VOLUME = {240},
      YEAR = {2021},
    NUMBER = {3},
     PAGES = {1323--1347},
      ISSN = {0003-9527,1432-0673},
   MRCLASS = {35Q35 (76D05)},
  MRNUMBER = {4264947},
       DOI = {10.1007/s00205-021-01636-5},
       URL = {https://doi.org/10.1007/s00205-021-01636-5},
}

@article {Wu2024,
    AUTHOR = {Wang, Wendong and Wu, Di and Zhang, Zhifei},
     TITLE = {Scaling-invariant {S}errin criterion via one velocity
              component for the {N}avier-{S}tokes equations},
   JOURNAL = {Ann. Inst. H. Poincar\'e{} C Anal. Non Lin\'eaire},
  FJOURNAL = {Annales de l'Institut Henri Poincar\'e{} C. Analyse Non
              Lin\'eaire},
    VOLUME = {41},
      YEAR = {2024},
    NUMBER = {1},
     PAGES = {159--185},
      ISSN = {0294-1449,1873-1430},
   MRCLASS = {35Q30 (35Q35 76D03 76D05)},
  MRNUMBER = {4706030},
       DOI = {10.4171/aihpc/77},
       URL = {https://doi.org/10.4171/aihpc/77},
}

@article {Miller2020,
    AUTHOR = {Miller, Evan},
     TITLE = {A regularity criterion for the {N}avier-{S}tokes equation
              involving only the middle eigenvalue of the strain tensor},
   JOURNAL = {Arch. Ration. Mech. Anal.},
  FJOURNAL = {Archive for Rational Mechanics and Analysis},
    VOLUME = {235},
      YEAR = {2020},
    NUMBER = {1},
     PAGES = {99--139},
      ISSN = {0003-9527,1432-0673},
   MRCLASS = {35Q30 (35B65 76D03 76D05)},
  MRNUMBER = {4062474},
       DOI = {10.1007/s00205-019-01419-z},
       URL = {https://doi.org/10.1007/s00205-019-01419-z},
}

@article {Dong2019,
    AUTHOR = {Li, Dong},
     TITLE = {On {K}ato-{P}once and fractional {L}eibniz},
   JOURNAL = {Rev. Mat. Iberoam.},
  FJOURNAL = {Revista Matem\'atica Iberoamericana},
    VOLUME = {35},
      YEAR = {2019},
    NUMBER = {1},
     PAGES = {23--100},
      ISSN = {0213-2230,2235-0616},
   MRCLASS = {35R11 (35Q35 35Q86)},
  MRNUMBER = {3914540},
MRREVIEWER = {Vincenzo\ Ambrosio},
       DOI = {10.4171/rmi/1049},
       URL = {https://doi.org/10.4171/rmi/1049},
}

@article {Cao2008,
    AUTHOR = {Cao, Chongsheng and Titi, Edriss S.},
     TITLE = {Regularity criteria for the three-dimensional
              {N}avier-{S}tokes equations},
   JOURNAL = {Indiana Univ. Math. J.},
  FJOURNAL = {Indiana University Mathematics Journal},
    VOLUME = {57},
      YEAR = {2008},
    NUMBER = {6},
     PAGES = {2643--2661},
      ISSN = {0022-2518,1943-5258},
   MRCLASS = {35Q30 (35B65 76D05)},
  MRNUMBER = {2482994},
MRREVIEWER = {Luigi\ Carlo\ Berselli},
       DOI = {10.1512/iumj.2008.57.3719},
       URL = {https://doi.org/10.1512/iumj.2008.57.3719},
}

@article {Zhou2010,
    AUTHOR = {Zhou, Yong and Pokorn\'y, Milan},
     TITLE = {On the regularity of the solutions of the {N}avier-{S}tokes
              equations via one velocity component},
   JOURNAL = {Nonlinearity},
  FJOURNAL = {Nonlinearity},
    VOLUME = {23},
      YEAR = {2010},
    NUMBER = {5},
     PAGES = {1097--1107},
      ISSN = {0951-7715,1361-6544},
   MRCLASS = {35Q30 (35B65 76D03 76D05)},
  MRNUMBER = {2630092},
       DOI = {10.1088/0951-7715/23/5/004},
       URL = {https://doi.org/10.1088/0951-7715/23/5/004},
}

@article {Zhang2007,
    AUTHOR = {Chemin, Jean-Yves and Zhang, Ping},
     TITLE = {On the global wellposedness to the 3-{D} incompressible
              anisotropic {N}avier-{S}tokes equations},
   JOURNAL = {Comm. Math. Phys.},
  FJOURNAL = {Communications in Mathematical Physics},
    VOLUME = {272},
      YEAR = {2007},
    NUMBER = {2},
     PAGES = {529--566},
      ISSN = {0010-3616,1432-0916},
   MRCLASS = {35Q30 (35B30 35D05 76D03 76D05)},
  MRNUMBER = {2300252},
MRREVIEWER = {Luigi\ Carlo\ Berselli},
       DOI = {10.1007/s00220-007-0236-0},
       URL = {https://doi.org/10.1007/s00220-007-0236-0},
}

@article {Paicu2005,
    AUTHOR = {Paicu, Marius},
     TITLE = {\'{{E}}quation anisotrope de {N}avier-{S}tokes dans des espaces
              critiques},
   JOURNAL = {Rev. Mat. Iberoamericana},
  FJOURNAL = {Revista Matem\'atica Iberoamericana},
    VOLUME = {21},
      YEAR = {2005},
    NUMBER = {1},
     PAGES = {179--235},
      ISSN = {0213-2230},
   MRCLASS = {35Q30 (35Q35 76D03 76D05 76D09)},
  MRNUMBER = {2155019},
MRREVIEWER = {Gerhard\ O.\ Str\"ohmer},
       DOI = {10.4171/RMI/420},
       URL = {https://doi.org/10.4171/RMI/420},
}

@article {Iskauriaza2003,
    AUTHOR = {Escauriaza, L. and Ser\"egin, G. A. and {\v{S}}ver{\'a}k, V.},
     TITLE = {{$L_{3,\infty}$}-solutions of {N}avier-{S}tokes equations and
              backward uniqueness},
   JOURNAL = {Uspekhi Mat. Nauk},
  FJOURNAL = {Uspekhi Matematicheskikh Nauk},
    VOLUME = {58},
      YEAR = {2003},
    NUMBER = {2(350)},
     PAGES = {3--44},
      ISSN = {0042-1316,2305-2872},
   MRCLASS = {35Q30 (76D03 76D05)},
  MRNUMBER = {1992563},
MRREVIEWER = {Grzegorz\ Karch},
       DOI = {10.1070/RM2003v058n02ABEH000609},
       URL = {https://doi.org/10.1070/RM2003v058n02ABEH000609},
}

@article {Hopf1951,
    AUTHOR = {Hopf, Eberhard},
     TITLE = {\"Uber die {A}nfangswertaufgabe f\"ur die hydrodynamischen
              {G}rundgleichungen},
   JOURNAL = {Math. Nachr.},
  FJOURNAL = {Mathematische Nachrichten},
    VOLUME = {4},
      YEAR = {1951},
     PAGES = {213--231},
      ISSN = {0025-584X,1522-2616},
   MRCLASS = {76.1X},
  MRNUMBER = {50423},
MRREVIEWER = {J.\ Kamp\'e{} de F\'eriet},
       DOI = {10.1002/mana.3210040121},
       URL = {https://doi.org/10.1002/mana.3210040121},
}

@article {Lady1967,
    AUTHOR = {Lad\v{y}zenskaja, O. A.},
     TITLE = {Uniqueness and smoothness of generalized solutions of
              {N}avier-{S}tokes equations},
   JOURNAL = {Zap. Nau\v cn. Sem. Leningrad. Otdel. Mat. Inst. Steklov.
              (LOMI)},
  FJOURNAL = {Zapiski Nau\v cnyh Seminarov Leningradskogo Otdelenija
              Matemati\v ceskogo Instituta im. V. A. Steklova Akademii Nauk
              SSSR (LOMI)},
    VOLUME = {5},
      YEAR = {1967},
     PAGES = {169--185},
   MRCLASS = {35.79 (76.00)},
  MRNUMBER = {236541},
MRREVIEWER = {Mikhail\ Borsuk},
}

@article {Leray1934,
    AUTHOR = {Leray, Jean},
     TITLE = {Sur le mouvement d'un liquide visqueux emplissant l'espace},
   JOURNAL = {Acta Math.},
  FJOURNAL = {Acta Mathematica},
    VOLUME = {63},
      YEAR = {1934},
    NUMBER = {1},
     PAGES = {193--248},
      ISSN = {0001-5962,1871-2509},
   MRCLASS = {99-04},
  MRNUMBER = {1555394},
       DOI = {10.1007/BF02547354},
       URL = {https://doi.org/10.1007/BF02547354},
}

@article {Prodi1959,
    AUTHOR = {Prodi, Giovanni},
     TITLE = {Un teorema di unicit\`a{} per le equazioni di
              {N}avier-{S}tokes},
   JOURNAL = {Ann. Mat. Pura Appl. (4)},
  FJOURNAL = {Annali di Matematica Pura ed Applicata. Serie Quarta},
    VOLUME = {48},
      YEAR = {1959},
     PAGES = {173--182},
      ISSN = {0003-4622},
   MRCLASS = {35.79},
  MRNUMBER = {126088},
MRREVIEWER = {J.\ L.\ Lions},
       DOI = {10.1007/BF02410664},
       URL = {https://doi.org/10.1007/BF02410664},
}

@incollection {Serrin1963,
    AUTHOR = {Serrin, James},
     TITLE = {The initial value problem for the {N}avier-{S}tokes equations},
 BOOKTITLE = {Nonlinear {P}roblems ({P}roc. {S}ympos., {M}adison, {W}is.,
              1962)},
     PAGES = {69--98},
 PUBLISHER = {Univ. Wisconsin Press, Madison, WI},
      YEAR = {1963},
   MRCLASS = {35.79},
  MRNUMBER = {150444},
MRREVIEWER = {G.\ Prodi},
}

@article {Zhang2017,
    AUTHOR = {Chemin, Jean-Yves and Zhang, Ping and Zhang, Zhifei},
     TITLE = {On the critical one component regularity for 3-{D}
              {N}avier-{S}tokes system: general case},
   JOURNAL = {Arch. Ration. Mech. Anal.},
  FJOURNAL = {Archive for Rational Mechanics and Analysis},
    VOLUME = {224},
      YEAR = {2017},
    NUMBER = {3},
     PAGES = {871--905},
      ISSN = {0003-9527,1432-0673},
   MRCLASS = {35Q30 (35B65 76D03 76D05)},
  MRNUMBER = {3621812},
MRREVIEWER = {Luigi\ Carlo\ Berselli},
       DOI = {10.1007/s00205-017-1089-0},
       URL = {https://doi.org/10.1007/s00205-017-1089-0},
}

@article {Lei2019,
    AUTHOR = {Han, Bin and Lei, Zhen and Li, Dong and Zhao, Na},
     TITLE = {Sharp one component regularity for {N}avier-{S}tokes},
   JOURNAL = {Arch. Ration. Mech. Anal.},
  FJOURNAL = {Archive for Rational Mechanics and Analysis},
    VOLUME = {231},
      YEAR = {2019},
    NUMBER = {2},
     PAGES = {939--970},
      ISSN = {0003-9527,1432-0673},
   MRCLASS = {35Q30 (35B65 76D05)},
  MRNUMBER = {3900817},
MRREVIEWER = {Luigi\ Carlo\ Berselli},
       DOI = {10.1007/s00205-018-1292-7},
       URL = {https://doi.org/10.1007/s00205-018-1292-7},
}

@article {Chemin2016,
    AUTHOR = {Chemin, Jean-Yves and Zhang, Ping},
     TITLE = {On the critical one component regularity for 3-{D}
              {N}avier-{S}tokes systems},
   JOURNAL = {Ann. Sci. \'Ec. Norm. Sup\'er. (4)},
  FJOURNAL = {Annales Scientifiques de l'\'Ecole Normale Sup\'erieure.
              Quatri\`eme S\'erie},
    VOLUME = {49},
      YEAR = {2016},
    NUMBER = {1},
     PAGES = {131--167},
      ISSN = {0012-9593,1873-2151},
   MRCLASS = {35Q30 (35B44 35B65)},
  MRNUMBER = {3465978},
MRREVIEWER = {Lorenzo\ Brandolese},
       DOI = {10.24033/asens.2278},
       URL = {https://doi.org/10.24033/asens.2278},
}

@article {Kozono2000,
    AUTHOR = {Kozono, Hideo and Taniuchi, Yasushi},
     TITLE = {Bilinear estimates in {BMO} and the {N}avier-{S}tokes
              equations},
   JOURNAL = {Math. Z.},
  FJOURNAL = {Mathematische Zeitschrift},
    VOLUME = {235},
      YEAR = {2000},
    NUMBER = {1},
     PAGES = {173--194},
      ISSN = {0025-5874,1432-1823},
   MRCLASS = {76D03 (35Q30 76D05)},
  MRNUMBER = {1785078},
MRREVIEWER = {Piotr\ Bogus\l aw\ Mucha},
       DOI = {10.1007/s002090000130},
       URL = {https://doi.org/10.1007/s002090000130},
}

@article {Chen2006,
    AUTHOR = {Chen, Qionglei and Zhang, Zhifei},
     TITLE = {Space-time estimates in the {B}esov spaces and the
              {N}avier-{S}tokes equations},
   JOURNAL = {Methods Appl. Anal.},
  FJOURNAL = {Methods and Applications of Analysis},
    VOLUME = {13},
      YEAR = {2006},
    NUMBER = {1},
     PAGES = {107--122},
      ISSN = {1073-2772,1945-0001},
   MRCLASS = {35Q30 (35B45 35D10 76D03 76D05)},
  MRNUMBER = {2275874},
MRREVIEWER = {Sergey\ Nikolaevich\ Alekseenko},
       DOI = {10.4310/MAA.2006.v13.n1.a6},
       URL = {https://doi.org/10.4310/MAA.2006.v13.n1.a6},
}

@incollection {1,
    AUTHOR = {Neustupa, Jir\'i{} and Novotn\'y, Anton\'in and Penel,
              Patrick},
     TITLE = {An interior regularity of a weak solution to the
              {N}avier-{S}tokes equations in dependence on one component of
              velocity},
 BOOKTITLE = {Topics in mathematical fluid mechanics},
    SERIES = {Quad. Mat.},
    VOLUME = {10},
     PAGES = {163--183},
 PUBLISHER = {Dept. Math., Seconda Univ. Napoli, Caserta},
      YEAR = {2002},
   MRCLASS = {35Q30 (35B65 76D03 76D05)},
  MRNUMBER = {2051774},
MRREVIEWER = {Alp\ O.\ Eden},
}

@article {2,
    AUTHOR = {He, Cheng},
     TITLE = {Regularity for solutions to the {N}avier-{S}tokes equations
              with one velocity component regular},
   JOURNAL = {Electron. J. Differential Equations},
  FJOURNAL = {Electronic Journal of Differential Equations},
      YEAR = {2002},
     PAGES = {No. 29, 13},
      ISSN = {1072-6691},
   MRCLASS = {35Q30 (35D10 76D03 76D05)},
  MRNUMBER = {1907705},
MRREVIEWER = {Ewa\ Zadrzy\'nska},
}

@article {3,
    AUTHOR = {Zhou, Yong},
     TITLE = {A new regularity criterion for the {N}avier-{S}tokes equations
              in terms of the direction of vorticity},
   JOURNAL = {Monatsh. Math.},
  FJOURNAL = {Monatshefte f\"ur Mathematik},
    VOLUME = {144},
      YEAR = {2005},
    NUMBER = {3},
     PAGES = {251--257},
      ISSN = {0026-9255,1436-5081},
   MRCLASS = {35Q30 (35B65 76D03 76D05)},
  MRNUMBER = {2130277},
MRREVIEWER = {Luigi\ Carlo\ Berselli},
       DOI = {10.1007/s00605-004-0266-z},
       URL = {https://doi.org/10.1007/s00605-004-0266-z},
}

@article {4,
    AUTHOR = {Penel, Patrick and Pokorn\'y, Milan},
     TITLE = {Some new regularity criteria for the {N}avier-{S}tokes
              equations containing gradient of the velocity},
   JOURNAL = {Appl. Math.},
  FJOURNAL = {Applications of Mathematics},
    VOLUME = {49},
      YEAR = {2004},
    NUMBER = {5},
     PAGES = {483--493},
      ISSN = {0862-7940,1572-9109},
   MRCLASS = {35Q30 (35B65 76D03 76D05)},
  MRNUMBER = {2086090},
MRREVIEWER = {Pablo\ Braz e Silva},
       DOI = {10.1023/B:APOM.0000048124.64244.7e},
       URL = {https://doi.org/10.1023/B:APOM.0000048124.64244.7e},
}

@article {5,
    AUTHOR = {Kukavica, Igor and Ziane, Mohammed},
     TITLE = {Navier-{S}tokes equations with regularity in one direction},
   JOURNAL = {J. Math. Phys.},
  FJOURNAL = {Journal of Mathematical Physics},
    VOLUME = {48},
      YEAR = {2007},
    NUMBER = {6},
     PAGES = {065203, 10},
      ISSN = {0022-2488,1089-7658},
   MRCLASS = {35Q30 (76D03 76D05)},
  MRNUMBER = {2337002},
       DOI = {10.1063/1.2395919},
       URL = {https://doi.org/10.1063/1.2395919},
}

@article{Zhang2024,
  title={On the one time-varying component regularity criteria for 3-D Navier-Stokes equations},
  author={Liu, Yanlin and Zhang, Ping},
  journal={SIAM Journal on Mathematical Analysis},
  volume={56},
  number={5},
  pages={6213--6231},
  year={2024},
  publisher={SIAM}
}
\end{document}